\theoremstyle{definition}
\DeclareMathOperator{\st}{star}
\newtheorem{definition}{Definition}[section]
\newtheorem{proposition}{Proposition}[section]
\newtheorem{theorem}{Theorem}[section]
\newtheorem{lemma}{Lemma}[section]
\theoremstyle{remark}
\newtheorem{remark}{Remark}[section]
\newtheorem{corollary}{Corollary}[section]
\newcommand{\normmm}[1]{{\left\vert\kern-0.25ex\left\vert\kern-0.25ex\left\vert #1 
		\right\vert\kern-0.25ex\right\vert\kern-0.25ex\right\vert}}
  \DeclareMathOperator{\divergence}{div}
  \renewcommand{\div}{\divergence}
  \DeclareMathOperator{\curl}{curl}
\begin{document}

\title[FEM for stress gradient elasticity]{Mixed Finite element method for stress gradient elasticity}

\author {Ting Lin}
\address{School of Mathematical Sciences, Peking University, Beijing 100871, P.R. China.}
\email{ lintingsms@pku.edu.cn }

\author {Shudan Tian}
\address{School of Mathematics and Computational Science, Xiangtan University, Xiangtan 411105, P. R. China. }
\email{ shudan.tian@xtu.edu.cn }

\thanks{The work of TL was supported by the NSFC Project 123B2014. The work of ST was supported by  the NSFC 12401483. }

% Received by the editors ?
%\date{ }

\begin{abstract}
This paper develops stable finite element pairs for the linear stress gradient elasticity model, overcoming classical elasticity's limitations in capturing size effects. We analyze mesh conditions to establish parameter-robust error estimates for the proposed pairs, achieving unconditional stability for finite elements with higher vertex continuity and conditional stability for CG-DG pairs when no interior vertex has edges lying on three or fewer lines. Numerical experiments validate the theoretical results, demonstrating optimal convergence rates.
\end{abstract}

%\keywords{
% nonconforming $H^2$ element, finite element method, biharmonic problem, tetrahedral grid}

%\subjclass[2000]{ 65N15, 65N30S}

\maketitle

\def\a#1{\begin{align*}#1\end{align*}}\def\an#1{\begin{align}#1\end{align}}
\def\ad#1{\begin{aligned}#1\end{aligned}}

\section{Introduction}
Classical linear elasticity models have been widely applied to fundamental problems and engineering applications. As a consequence, numerous structure-preserving and efficient finite element methods for these models are proposed and well-developed in literature, such as  \cite{Arnold2002,Arnoldweaksym,Arnold3D,HuZhang14,HZ3D,HZfamily,HZlowest,GuzmanAfiled,PaulyZulehner23}. Nevertheless, for materials with microstructure, the classical elasticity model lacks an internal length scale in its constitutive structure, making it unable to capture phenomena such as size effects \cite{Aifantis2009,Aifantis2011overview}. Consequently, classical continuum theories fall short in providing an accurate and detailed description of deformation phenomena in such materials observed in experiments \cite{Aifantis2011overview}. This motivates further exploration of the generalization either by introducing modifying terms or by allowing certain laws to be violated.

Gradient elasticity theories \cite{MINDLIN1964,Aifantis1986} extend classical elasticity by incorporating higher-order spatial derivatives of strain, stress, etc. The introduction of higher-order terms of strain or stress can handle the singularity caused by the crack tips and dislocation cores \cite{Aifantis2011overview}. In this article, we focus on the linear stress gradient elasticity model, first introduced by Eringen \cite{Eringen1983}, as a model problem of the gradient elasticity theory. In the setting of stress gradient theory, the constitution law is replaced by $\mathcal E = \mathcal A(\sigma - \iota^2 \Delta \sigma)$, where the size parameter $\iota$ is a positive constant and we assue that $\iota \le 1$ in this paper. 
This problem has been discussed in engineering~\cite{Aifantis2011overview,VariationLSG}, but
 there are few relevant theoretical results in numerical analysis so far. 

Given a two-dimensional connected domain $\Omega$, the linear stress gradient elasticity equation is 
\begin{equation}
	\begin{cases}
\mathcal{E} = \mathcal{A} ( \sigma-\iota^2\triangle\sigma ) & \text{in }\Omega,  \\
\mathrm{div} \sigma =  f & \text{in }\Omega,  \\
2\mathcal{E}=\mathrm{symgrad}~ u & \text{in }\Omega,  \\
 \sigma  n =  g_f,~&\text{on }\Gamma_f,\\
 u =  g_c,~&\text{on } \Gamma_c ,\\
\partial_n\sigma = 0,&\text{on }\partial\Omega.
	\end{cases}
	\label{lsg-strong}
\end{equation}
where $\Gamma_c$ and $\Gamma_f $ is a partition of $\partial \Omega$.
Here $\sigma$, $\mathcal{E}$, and $u$ represent the stress (symmetric matrix-valued), strain (symmetric matrix-valued), and displacement (vector-valued), respectively. In this paper, we assume $\mathcal A$ is an isotropic fourth-order tensor, defined as 
$$
 \mathcal{A}\sigma = \frac{1}{2\mu}\sigma-\frac{\lambda}{2\mu(2\mu+2\lambda)}\mathrm{tr}(\sigma)I.
$$ Clearly, $\mathcal A$ models the relationship between the stress and the modified strain.

% Young's modulus $E = \frac{\mu(2\mu+3\lambda)}{\mu+\lambda}$ and Poisson rate $\nu = \frac{\lambda}{2(\mu+\lambda)}$. Materials with the Poisson rate close to $0.5$ are nearly incompressible materials. 

%This invokes us to use mixed finite element methods. 
%    {\color{blue}Let $\Sigma=H^1(\Omega,\mathbb{S})$ and $V=(L^2(\Omega))^d.$}
%\lt{Here, we should clarify the challenge. Some results from PDE, regularity, and small parameter dependency.}

To analyze the linear stress gradient elasticity equation, we introduce the variational formulation of \eqref{lsg-strong} in a fixed form. Mathematically speaking, the variational formulation of the linear stress gradient model \cite{VariationLSG} can be regarded as a perturbed Hellinger-Reissner elasticity variation. Let $\mathbb S^2$ be the space of two-dimensional symmetric matrices, $\mathbb R^2$ be the space of two-dimensional vectors. 
Given $\Sigma \subset H^1(\Omega; \mathbb S^2)$ and vector-valued space $Q \subset L^2(\Omega; \mathbb R^2)$, the variational framework seeks solution $ \sigma \in \Sigma,~ u \in Q$ such that
\begin{equation}
	\begin{cases}
	&\iota^2(\nabla  \mathcal{A} \sigma,\nabla  \tau)+( \mathcal{A}\sigma, \tau) + (\mathrm{div} \tau, u) = 0,~\forall  \tau \in \Sigma,\\
	&(\mathrm{div} \sigma, v) = ( f, v),~\forall v\in Q.
	\end{cases}
\label{lsg-weak}
\end{equation}

For simplicity, here we only consider the planar case with the natural boundary conditions, i.e., $\Gamma_f = \emptyset.$ In this case, we set $\Sigma := H^1(\Omega; \mathbb S^2)$ and $Q := L^2(\Omega; \mathbb R^2)$. The stress and displacement can be put in to the last two slots of the smoothed stress complex:
\begin{equation}
\label{eq:complex-sobolev}
\mathbf{P}_1 ~\hookrightarrow~ H^3(\Omega) \xrightarrow{\curl \curl^T} H^1(\Omega;\mathbb{S}^2) \xrightarrow{\div} L^2(\Omega;\mathbb R^2) \to 0,
\end{equation}
where $\curl \curl^T$ denotes the planar Airy tensor. The two parameters $\lambda$ and $\iota$ are essential in modeling. 
 % Here, {$\mu = O(1)$ is a fixed positive parameter} and $\lambda\in (0,+\infty)$ are Lam\'e parameters. 
 When $\lambda \to +\infty$, Poisson's ratio $\nu = \frac{\lambda}{2(\mu + \lambda)} \to \frac{1}{2}$, the materials become incompressible. When $\iota \to 0$, the effects introduced by the gradient are eliminated, and the model will converge to the standard linear elasticity model. Therefore, the goal of this paper is to provide  $\lambda$- and $\iota$-robust numerical schemes. 
 
It should be highlighted that the stress gradient elasticity is selected as one of the model problems in gradient elasticity theory. In fact, gradient elasticity theory encompasses various formulations. Notably, the strain gradient model adopts a complementary constitutive law, expressed as $\mathcal{E} - \iota^2 \Delta \mathcal{E} = \mathcal{A}\sigma$ \cite{LSG1992}. This formulation results in a variational framework that yields a singularly perturbed fourth-order equation in the primal form \cite{Aifantis1999,Ming2017,MingH2korn}. The numerical treatment can be found in \cite{MingMixLSG,HuangLSGSIAM,Huang2025LSG}. In contrast, the stress gradient model considered in this paper produces a perturbed equation in mixed form, which is not fully addressed in the literature. Therefore, the analysis and the numerical scheme of the linear stress gradient elasticity problem can also contributes to the understanding of singular perturbation mixed problems.

\textbf{Contributions.} This paper develops stable finite element pairs for the stress gradient elasticity model \eqref{lsg-weak}, ensuring parameter-robust error estimates with respect to $\lambda$ and $\iota$. Especially, the results are well-adapted when $\iota = 0$, leading to some stable pairs for the linear elasticity with $H^1$ conforming stress.

% Based on the fact that $\iota^2 (\nabla  \mathcal{A} \sigma,\nabla  \tau)+( \mathcal{A}\sigma, \tau)$ is {uniformly} coercive on the divergence-free fields (as shown in \Cref{sec:continuous}), it follows that \eqref{lsg-weak} is well-posed for the pair $H^1(\Omega; \mathbb S^2) \times L^2(\Omega; \mathbb R^2)$, and some parameter robust estimates can be established.  \lt{[I think this paragraph can be removed.]}

By the Ladyzhenskaya--Babu\v{s}ka--Brezzi conditions, it then suffices to consider $\Sigma_h \times Q_h \subset H^1(\Omega;\mathbb S^2) \times L^2(\Omega;\mathbb R^2)$ such that {(1) (closedness)} $\div \Sigma_h = Q_h$, and for any $q_h \in Q_h$, {(2) (existence of Fortin operator)} there exists $\sigma_h \in \Sigma_h$ such that $\div \sigma_h = q_h$ and $\|\sigma_h\|_{H^1} \lesssim \|q_h\|_{L^2}$. Throughout this paper, we call the pair is \emph{stable} if the above conditions hold.  Generally speaking, the goal can be separated into two parts:
\begin{enumerate}
	\item When $\div :\Sigma_h \to Q_h$ is surjective. That is, $\div \Sigma_h = Q_h$. 
	\item When can we find the bounded inverse for the pair $(\Sigma_h, \div \Sigma_h$). 
\end{enumerate}

Let us first introduce the candidate spaces for discretization. 
The most natural choice is to use continuous piecewise $P_k$ spaces $\Sigma_k^{0}$ (referred as Continuous Galerkin elements or Lagrange elements) as the discrete stress spaces, and to use discontinuous piecewise $P_{k-1}$ spaces $Q_{k-1}^{-1}$ (referred as Discontinuous Galerkin elements) as the discrete displacement spaces, to meet the regularity requirement. Moreover, the CG-DG pairs also yield great flexibility in implementation, leading to computational efficiency. The starting point of our investigation is then to design a stable CG-DG pair. 

This paper establishes the first results for the stable pair $H^1(\mathbb{S}^2) \times L^2(\mathbb{R}^2)$ based on the CG-DG pair, providing a foundation for designing stable pairs. 
Specifically, we prove that if \emph{for any interior vertex, all of its edges lie in not less than four lines,}  then $\div: \Sigma_k^0 \to Q_{k-1}^{-1}$ is surjective and a bounded right inverse exists, provided $k\ge 7$, see \Cref{thm:overall} for the precise statement. The violation can happen in two scenarios, one is the criss-cross, and the other is three-line-cross, see \Cref{fig:singularities} for an illustration. The presence of two types of singularities causes rank deficiencies in the divergence operator, affecting stability of the finite element pairs. {Notably, the type II singularities seems less discussed to the mixed finite element methods.}

\begin{figure}[htbp]
		\begin{tikzpicture}[scale=1.8]
			\draw[thick] (1/2,1/2) -- (1/2,1);
	   	    \draw[thick] (1/2,1/2) -- (1,1/2);
	   	    \draw[thick] (1/2,1/2) -- (1/2,0);
	   	    \draw[thick] (1/2,1/2) -- (0,1/2);
	   	    \draw[thick] (1/2,1) -- (1,1/2);
	   	    \draw[thick] (1/2,1) -- (0,1/2);
	   	    \draw[thick] (1/2,0) -- (0,1/2);
	   	    \draw[thick] (1/2,0) -- (1,1/2);
		\end{tikzpicture} \hspace{5em}		\begin{tikzpicture}[scale=0.9]
\draw[thick] (0,0) -- (1,0);
\draw[thick] (0,0) -- (1/2,1.732/2);
\draw[thick] (1/2,1.732/2) -- (1,0);
\draw[thick] (0,0) -- (1/2,-1.732/2);
\draw[thick] (1,0) -- (1/2,-1.732/2);
\draw[thick] (1,0) -- (3/2,1.732/2);
\draw[thick] (2,0) -- (3/2,1.732/2);
\draw[thick] (2,0) -- (1,0);
\draw[thick] (1/2,1.732/2) -- (3/2,1.732/2);
\draw[thick] (1/2,-1.732/2) -- (3/2,-1.732/2);
\draw[thick] (1,0) -- (3/2,-1.732/2);
\draw[thick] (2,0) -- (3/2,-1.732/2);
		\end{tikzpicture}
\caption{Type I (criss-cross) and Type II singularities (three-line-cross).}
\label{fig:singularities}
\end{figure}
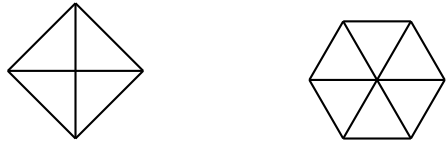

 In fact, the space $\div \Sigma_k^0$ can be explicitly characterized. In the absence of nearly singular vertices (defined precisely later),  $(\Sigma_k^0, \div \Sigma_k^0)$ constitutes a stable pair, see \Cref{thm:lagrange}. In practice, several approaches can generate triangular meshes avoiding singular vertices. For example, a general triangular mesh can be refined using the Morgan-Scott split, while a general rectangular mesh can be transformed into a fish-bone mesh, as illustrated in \Cref{fig:fg-fb}.

\begin{figure}[htbp]
\begin{tikzpicture}[scale=1.8, thick]
	
	% 外部三角形顶点（等边三角形）
	\coordinate (A) at (0,1);
	\coordinate (B) at (-0.6,0);
	\coordinate (C) at (0.6,0);
	
	% 分点位置比例
	\def\r{0.25} % 四等分取第二个点
	
	% 内部倒三角形顶点：沿中线方向偏移
	\coordinate (a) at ($(B)!0.5!(C)!\r!(A)$); % 中线 BC->A
	\coordinate (b) at ($(C)!0.5!(A)!\r!(B)$); % 中线 CA->B
	\coordinate (c) at ($(A)!0.5!(B)!\r!(C)$); % 中线 AB->C
	
	% 顶边上对应点（连接 A 的两个点）
	\coordinate (bc1) at (b);
	\coordinate (bc2) at (c);
	
	% 底边上对应点（连接 a 的两个点）
	\coordinate (ab1) at ($(B)!0.5!(C)!\r!(A)$);
	\coordinate (ab2) at ($(C)!0.5!(B)!\r!(A)$);
	
	% 外三角形
	\draw (A) -- (B) -- (C) -- cycle;
	
	% 内倒三角形
	\draw[dashed] (a) -- (b) -- (c) -- cycle;
	
	% 中心两个三角形
	\draw (A) -- (b) -- (c) -- cycle;
	\draw (a) -- (ab1) -- (ab2) -- cycle;
	
	% 连接内部点形成其它三角形
	\draw (B) -- (c) -- (a);
	\draw (C) -- (b) -- (a);
	\draw (B) -- (ab1);
	\draw (C) -- (ab2);
	
	% 补全边
	\draw (c) -- (ab1);
	\draw (b) -- (ab2);
	
	% 可选：节点标记
	% \foreach \p/\name in {A/A, B/B, C/C, a/a, b/b, c/c}
	%   \node[circle,fill=black,inner sep=1pt,label=above right:\name] at (\p) {};
	
\end{tikzpicture} \hspace{5em}		\begin{tikzpicture}[scale=1.8, thick]
	
	% 定义大正方形的4个子方块的坐标
	\coordinate (O) at (0,0);       % 左下
	\coordinate (A) at (0,1);       % 左上
	\coordinate (B) at (1,1);       % 右上
	\coordinate (C) at (1,0);       % 右下
	\coordinate (M) at (0.5,0);     % 下中
	\coordinate (N) at (0.5,1);     % 上中
	\coordinate (L) at (0,0.5);     % 左中
	\coordinate (R) at (1,0.5);     % 右中
	\coordinate (Center) at (0.5,0.5); % 中心
	
	% 画网格方块
	\draw (O) -- (C) -- (B) -- (A) -- cycle;
	\draw (L) -- (R);
	\draw (M) -- (N);
	
	% 对角线剖分
	% 左上：左下角 → 右上角
	\draw (0,0.5) -- (0.5,1);
	% 左下：左下角 → 右上角
	\draw (0,0) -- (0.5,0.5);
	% 右上：左上角 → 右下角
	\draw (0.5,1) -- (1,0.5);
	% 右下：左上角 → 右下角
	\draw (0.5,0.5) -- (1,0);
	
	% 可选：标点
	% \foreach \p/\name in {O/O, A/A, B/B, C/C, Center/C}
	%   \node[circle,fill=black,inner sep=1pt,label=above right:\name] at (\p) {};
	
\end{tikzpicture}
\caption{Morgan--Scott split and fish-bone mesh.}
\label{fig:fg-fb}
\end{figure}
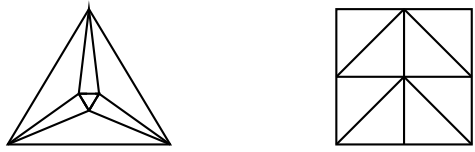

On the other hand, we are interested in finding the finite element pair satisfying the unconditional stability. Here ``unconditional'' indicates that the inf-sup condition can be established without any specific geometric structures on triangulations (in this case, singular vertices), and only depends on the shape regularity.  Motivated by the Falk--Neilan Stokes pair \cite{FalkNeilan13}, CG-DG pairs with extra vertex continuity are considered. 
In \Cref{thm:c2}, we show that the pair $\Sigma_k^2 \times Q_{k-1}^1$ achieves unconditional stability. Here $\Sigma_k^2 := \{ \sigma \in \Sigma_k^0, \sigma \text{ is } C^2 \text{ at vertices}\}$, and  $Q_{k-1}^1 := \{ \sigma \in Q_k^{-1}, \sigma \text{ is } C^1 \text{ at vertices}\}$. The extra smoothness at vertices cannot be reduced \Cref{thm:hermite}. The intuition of the unconditional stability comes from the fact that $C^4$ continuity is required for constructing $H^3$ conforming finite elements \cite{HuLinWuYuan25}.

% The way to see this is to discretize the following smoothed stress complex:
% \begin{equation}
% \label
% \mathbf{P}_1 ~\hookrightarrow~ H^3(\Omega) \xrightarrow{\curl \curl^T} H^1(\Omega) \otimes \mathbb{S}^2 \xrightarrow{\div} L^2(\Omega) \otimes \mathbb R^2\to 0,
% \end{equation}
% where $\curl \curl^T$ denotes the planar Airy tensor, where the last two slots give us the stable pair of \eqref{lsg-weak}. 

\textbf{Relationship to the Stokes complex.} The results presented above share some similarities with respect to the divergence-free Stokes pair. Correspondingly, \eqref{eq:complex-sobolev} has strong relationship with the following Stokes complex:
\begin{equation}
\mathbf{P}_0 ~\hookrightarrow~ H^2(\Omega) \xrightarrow{\curl} H^1(\Omega;\mathbb{R}^2) \xrightarrow{\div} L^2(\Omega)\to 0.
\end{equation}

However, the motivations of the study are slightly different from each other. 
Unlike the Stokes pair, where weak divergence-free conditions suffice, the stress gradient model requires exact divergence surjectivity due to the non-coercive bilinear form. As a consequence, the surjectivity and the complex structure yield much importance for smoothed stress complexes. 
% For the Stokes pair $V_h \times P_h$ , the inf-sup condition does not require $\div$ maps $V_h$ to $P_h$. As a consequence, the discrete velocity is usually \emph{weakly} divergence free. When we further consider eigenvalue problems, the violation of the closedness will generate spurious eigenvalues. Therefore, the divergence-free pair is not a necessity for solving Stokes equation but a better choice. However, this is not the case for \eqref{lsg-weak}. Like the linear elasticity systems, the bilinear term $\iota^2 (\nabla  \mathcal{A} \sigma,\nabla  \tau)+(\mathcal{A}\sigma, \tau)$ is not coercive  on the whole $H^1(\mathbb S)$ fields.
% Therefore, when constructing the finite element discretization, we need to make sure that the divergence operator maps $\Sigma_h$ to $Q_h$. 

We now compare the results from the divergence-free Stokes pair and the results in this paper. From the comparison, we can see the algebraic patterns. The CG-DG pair in the Stokes equation is first constructed and analyzed by Scott and Vogelius \cite{ScottVogelius85}, and a recent re-investigation can be found in \cite{GuzmanScott19}. They showed that the CG-DG pair is stable if there is no criss-cross internal vertices and the polynomial degree $k \ge 4$.

Building on the discrete Stokes complex \cite{GuzmanNeilan14,FalkNeilan13,Neilan15,HuZhangZhang22}, stable pairs with divergence-free properties can be constructed. Specifically, the Hermite element (globally continuous with $C^1$ vertex continuity) can be chosen as the discrete velocity space, paired with a globally discontinuous space with $C^0$ continuity for the discrete pressure space. However, we can show that this is insufficent for \eqref{lsg-weak}. In fact, we can prove that for such a Hermite pair in \eqref{lsg-weak}, the divergence operator is onto if and only if there is no criss-cross singular vertices. A comprehensive comparison of the results are presented in \Cref{tab:rank-def}. 

\begin{figure}[htbp]
	\begin{tabular}{c|c|c|c|c}
	 &
		 Stokes & Stress Gradient & Stokes & Stress Gradient
		\\
		\hline

		& Type I & Type I & Type II & Type II
		\\
		
		&
		
		\begin{tikzpicture}[scale=1.8]
			\draw[thick] (1/2,1/2) -- (1/2,1);
	   	    \draw[thick] (1/2,1/2) -- (1,1/2);
	   	    \draw[thick] (1/2,1/2) -- (1/2,0);
	   	    \draw[thick] (1/2,1/2) -- (0,1/2);
	   	    \draw[thick] (1/2,1) -- (1,1/2);
	   	    \draw[thick] (1/2,1) -- (0,1/2);
	   	    \draw[thick] (1/2,0) -- (0,1/2);
	   	    \draw[thick] (1/2,0) -- (1,1/2);
		\end{tikzpicture}
		&
		\begin{tikzpicture}[scale=1.8]
			\draw[thick] (1/2,1/2) -- (1/2,1);
\draw[thick] (1/2,1/2) -- (1,1/2);
\draw[thick] (1/2,1/2) -- (1/2,0);
\draw[thick] (1/2,1/2) -- (0,1/2);
\draw[thick] (1/2,1) -- (1,1/2);
\draw[thick] (1/2,1) -- (0,1/2);
\draw[thick] (1/2,0) -- (0,1/2);
\draw[thick] (1/2,0) -- (1,1/2);
		\end{tikzpicture}
		&
		\begin{tikzpicture}[scale=0.9]
\draw[thick] (0,0) -- (1,0);
\draw[thick] (0,0) -- (1/2,1.732/2);
\draw[thick] (1/2,1.732/2) -- (1,0);
\draw[thick] (0,0) -- (1/2,-1.732/2);
\draw[thick] (1,0) -- (1/2,-1.732/2);
\draw[thick] (1,0) -- (3/2,1.732/2);
\draw[thick] (2,0) -- (3/2,1.732/2);
\draw[thick] (2,0) -- (1,0);
\draw[thick] (1/2,1.732/2) -- (3/2,1.732/2);
\draw[thick] (1/2,-1.732/2) -- (3/2,-1.732/2);
\draw[thick] (1,0) -- (3/2,-1.732/2);
\draw[thick] (2,0) -- (3/2,-1.732/2);
		\end{tikzpicture}&		\begin{tikzpicture}[scale=0.9]
		\draw[thick] (0,0) -- (1,0);
		\draw[thick] (0,0) -- (1/2,1.732/2);
		\draw[thick] (1/2,1.732/2) -- (1,0);
		\draw[thick] (0,0) -- (1/2,-1.732/2);
		\draw[thick] (1,0) -- (1/2,-1.732/2);
		\draw[thick] (1,0) -- (3/2,1.732/2);
		\draw[thick] (2,0) -- (3/2,1.732/2);
		\draw[thick] (2,0) -- (1,0);
		\draw[thick] (1/2,1.732/2) -- (3/2,1.732/2);
		\draw[thick] (1/2,-1.732/2) -- (3/2,-1.732/2);
		\draw[thick] (1,0) -- (3/2,-1.732/2);
		\draw[thick] (2,0) -- (3/2,-1.732/2);
	\end{tikzpicture}\\
	\hline $(\Sigma_k^0, Q_{k-1}^{-1})$
%   	\begin{tikzpicture}[scale=0.8]
%   	\draw (0,0)--(1,1.732);
%   	\draw  (1,1.732)--(2,0);
%   	\draw  (0,0)--(2,0);
%   	%\draw [fill] (1,1.732/3) circle [radius=0.05];
%   	\node [above] at (1,1.732/3-0.1) {
%   		$+3$
%   	};
%   	\draw[fill] (0,0) circle [radius=0.05];
%   	\draw[fill] (2,0) circle [radius=0.05];
%   	\draw[fill] (1,1.732) circle [radius=0.05];
%   	\draw[fill] (1/2,0) circle [radius=0.05];
%   	\draw[fill] (1,0) circle [radius=0.05];
%   	\draw[fill] (3/2,0) circle [radius=0.05];
%   	\draw[fill] (1/4,1.732/4) circle [radius=0.05];
%   	\draw[fill] (1/2,1.732/2) circle [radius=0.05];
%   	\draw[fill] (3/4,1.732*3/4) circle [radius=0.05];
%   	\draw[fill] (2-1/4,1.732/4) circle [radius=0.05];
%   	\draw[fill] (2-1/2,1.732/2) circle [radius=0.05];
%   	\draw[fill] (2-3/4,1.732*3/4) circle [radius=0.05];
%    	\end{tikzpicture}
   & 1 &  3 & 0 & 1 \\
   \hline
   $(\Sigma_k^1, Q_{k-1}^0)$
%   		\begin{tikzpicture}[scale=0.8]
%   	\draw (0,0)--(1,1.732);
%   	\draw  (1,1.732)--(2,0);
%   	\draw  (0,0)--(2,0);
%   	%\draw [fill] (1,1.732/3) circle [radius=0.05];
%   		\node [above] at (1,1.732/3-0.1) {
%   			$+12$
%   	};
%   	\draw[fill] (0,0) circle [radius=0.05];
%   	\draw (0,0) circle [radius = 0.1];
%   	\draw (2,0) circle [radius = 0.1];
%   	\draw (1,1.732) circle [radius = 0.1];
%   	\draw[fill] (2,0) circle [radius=0.05];
%   	\draw[fill] (1,1.732) circle [radius=0.05];
%   	%\draw[->][thick] (0.5,0.5*1.732)--(0.2,0.15/1.732+0.6*1.732-0.05);
%   	%\draw[->][thick] (1.5,0.5*1.732)--(1.8,0.15/1.732+0.6*1.732-0.05);
%   	%\draw[->][thick] (0.45-0.1,0.35*1.732)--(0,0.4/1.732+0.3/1.732+0.35*1.732-0.2);
%   	%	\draw[->][thick] (1.65,0.35*1.732) -- (2,0.4/1.732+0.3/1.732+0.35*1.732-0.2);
%   	%	\draw[->][thick] (1.45,0.53*1.732)--(1.8,0.15/1.732+0.6*1.732);
%   	%		\draw[->][thick] (0.66,0.66*1.732)--(0,0.66/1.732+0.66*1.732);
%   	%		\draw [ultra thick] (0.30,0.3*1.732) -- (0.60,0.6*1.732);
%   	%			\draw [ultra thick] (2/3,0) -- (1/3+1,0);
%   	%			\draw [ultra thick] (1.7,0.3*1.732) -- (1.4,0.6*1.732);
%   	%\draw[->][thick]  (1,0) -- (1,-1/3);
%   	%			\draw[->][thick]  (7/6,0) -- (7/6,-1/3);
%   \end{tikzpicture}
   &0 & 1 &0 &0 \\
\hline
$(\Sigma_k^2, Q_{k-1}^1)$
%		\begin{tikzpicture}[scale=0.8]
%	\draw (0,0) -- (2,0);	
%	\draw (0,0) -- (1,1.732);
%	\draw (2,0) -- (1,1.732);
%	\node [above] at (1,1.732/3) {$+3$};
%	%\node [above] at (1,1.732) {$a_1$};
%	%\node [right] at (2,0) {$a_3$};
%	%\node [left] at (0,0) {$a_2$};	
%	%\node [left] at (0.5,1.732/2) {$e_3$};
%	%\node [right] at (1.5,1.732/2) {$e_2$};
%	\draw[fill] (1,1.732) circle [radius=0.05];
%	\draw (1,1.732) circle [radius=0.1];
%	\draw (1,1.732) circle [radius=0.15];
%	\draw[fill] (2,0) circle [radius=0.05];
%	\draw (2,0) circle [radius=0.1];
%	\draw (2,0) circle [radius=0.15];
%	\draw[fill] (0,0) circle [radius=0.05];
%	\draw (0,0) circle [radius=0.1];
%	\draw (0,0) circle [radius=0.15];
%\end{tikzpicture}
& 0  &0 &0  & 0
	\end{tabular}
	\caption{The rank deficiency of Stokes pair and stress gradient pair.}
    \label{tab:rank-def}
\end{figure}

\textbf{Notations.} In this paper, let $\mathcal T$ be a conforming planar triangulation of a polygon domain $\Omega$. Moreover, we assume that $\Omega$ is contractible.  Let $\mathcal E$ and $\mathcal V$ be the set of all edges and vertices, respectively. We will also use $T$, $e$, $v$ to denote a face (element), edge, and vertex.

 Define the rigid body motion space $\mathbf{RM}=\mathrm{span}\{(1,0)^T,~(0,1)^T,~(-y,x)^T\}$, which is the kernel of the deformation tensor. We define the perpendicular operator for a vector $\begin{bmatrix} a \\ b \end{bmatrix}$ as $\begin{bmatrix} a \\ b \end{bmatrix}^{\perp} := \begin{bmatrix} b \\ -a \end{bmatrix}$. 

\textbf{Organization.} The paper is organized as follows. In \Cref{sec:continuous}, we discuss the well-posedness and a priori estimates of \eqref{lsg-weak}. In \Cref{sec:discrete}, we show the main results of the construction of the stable pairs, and the corresponding error estimates are shown in \Cref{sec:error}. In \Cref{sec:numerics}, we show the numerical results. Some concluding remarks and the discussions are shown in \Cref{sec:discussion}.

\section{ Wellposedness and  a priori estimates for continuous problems}
\label{sec:continuous}

In this section, we show the well-posedness of \eqref{lsg-weak} and some parameter-robust estimates. For $\iota > 0$, let
$a_{\iota}(\sigma,\tau):=\iota^2(\nabla  \mathcal{A} \sigma,\nabla  \tau)+( \mathcal{A}\sigma, \tau)$ and $b(\sigma,u)=(\mathrm{div}\sigma,u).$ Therefore, the mixed form can be reformulated as 
\begin{equation}
\begin{cases}
    a_{\iota}(\sigma,\tau)+b(\tau,u)=0,& \forall\tau\in H^1(\Omega; \mathbb S^2),\\
   b(\sigma,v)=(f,v), &\forall v\in L^2(\Omega; \mathbb R^2).
\end{cases}
\end{equation}

For the stress space $\Sigma := H^1(\Omega;\mathbb S^2)$, we define the $\iota$-norm as $\| \sigma\|_{\iota}:=\iota| \sigma|_{H^1}+\|\mathrm{div} \sigma\|_{L^2}+\| \sigma\|_{L^2}$. For the displacement space $Q:= L^2(\Omega;\mathbb R^2)$, we equip it with the standard $L^2$ norm. The goal of this section is to derive some estimates independent of $\lambda$ and $\iota$.

% To shows the coercivity on $Z$ independent of $\lambda$, the following Lemma is necessary.  
To verify the wellposedness, we need to show that $a_{\iota}$ is coercive in the kernel space $Z:=\{  \sigma \in H^1(\Omega;\mathbb{S}^{2})~:~\mathrm{div} \sigma = 0\}$. We recall some results from the standard analysis of the linear elasticity equations. Hereafter, the hidden constants of $\lesssim$ are independent of $\lambda$ and $\iota$. 

\begin{lemma}
For $\tau\in H^1(\Omega;\mathbb{S}^2)$, the estimates hold pointwise:
\begin{equation}
    \nabla\tau^D:\nabla\tau^D +\mathrm{div}\tau\cdot\mathrm{div}\tau\geq \frac{1}{8} \nabla\tau:\nabla\tau.
\end{equation}    
\label{coerecivity-lemma1}
\end{lemma}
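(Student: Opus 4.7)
The lemma is a pointwise algebraic inequality between two nonnegative quadratic forms in the six first-order partial derivatives of $\tau$, so my plan is to verify it directly at each point via the trace-deviator decomposition, which decouples things cleanly in two dimensions.

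First I would introduce the pointwise scalar fields $t := \mathrm{tr}(\tau)$, $s := \tau_{11}-\tau_{22}$, and $b := \tau_{12}$. In these coordinates the deviator $\tau^D$ depends only on $(s,b)$, and a short direct computation from the definition of the Frobenius norm yields the clean splitting identity
$$
\nabla\tau:\nabla\tau \;=\; \nabla\tau^D:\nabla\tau^D \,+\, \tfrac{1}{2}|\nabla t|^2,
$$
together with $|\nabla\tau^D|^2 = \tfrac{1}{2}|\nabla s|^2 + 2|\nabla b|^2$. This reduces the lemma to bounding $|\nabla t|^2$ by a constant multiple of $|\nabla\tau^D|^2 + |\mathrm{div}\tau|^2$.

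For that bound I would write the two divergence equations as $2(\mathrm{div}\tau)_1 = t_x + s_x + 2b_y$ and $2(\mathrm{div}\tau)_2 = 2b_x + t_y - s_y$, solve for $t_x$ and $t_y$, and apply the elementary estimate $(\alpha+\beta+\gamma)^2 \le 3(\alpha^2+\beta^2+\gamma^2)$ to each. Summing and repackaging the $s$- and $b$-terms via the identity for $|\nabla\tau^D|^2$ above will produce an inequality of the shape $|\nabla t|^2 \le 12|\mathrm{div}\tau|^2 + 6|\nabla\tau^D|^2$, which on being substituted back into the splitting identity yields $|\nabla\tau|^2 \le 4|\nabla\tau^D|^2 + 6|\mathrm{div}\tau|^2 \le 8(|\nabla\tau^D|^2 + |\mathrm{div}\tau|^2)$, i.e., the claimed bound.

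There is no real obstacle: the argument is linear algebra at a single point. The only bookkeeping to watch is the factor $2$ carried by off-diagonal entries of a symmetric matrix in the Frobenius norm and the factor $1/2$ introduced by the change of variables $(\tau_{11},\tau_{22})\mapsto (t,s)$; these must match so that $|\nabla\tau|^2$ and $|\nabla\tau^D|^2$ translate consistently. The constant $1/8$ is what falls out of this naive estimate, and a sharper one would be obtainable by replacing the $(\alpha+\beta+\gamma)^2\le 3(\alpha^2+\beta^2+\gamma^2)$ step with a weighted Young inequality, but this is not needed for the stated lemma.
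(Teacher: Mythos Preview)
Your proof is correct, and in fact slightly cleaner than the paper's. Both arguments are pointwise algebraic verifications, but the organizing idea differs. The paper works directly in the entries $\tau_{11},\tau_{22},\tau_{12}$: it expands $|\nabla\tau^D|^2$ to produce the cross term $\nabla\tau_{11}\cdot\nabla\tau_{22}$, then rewrites that cross term via the identity
\[
\tau_{11,x}\tau_{22,x}+\tau_{11,y}\tau_{22,y}=(\tau_{11,x}+\tau_{12,y})\tau_{22,x}+\tau_{11,y}(\tau_{12,x}+\tau_{22,y})-\tau_{12,y}\tau_{22,x}-\tau_{12,x}\tau_{11,y},
\]
which inserts the divergence components, and finishes by applying Young's inequality to each product with carefully chosen weights. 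Your trace--deviator change of variables $(t,s,b)$ instead diagonalizes the problem at the outset: the orthogonal splitting $|\nabla\tau|^2=|\nabla\tau^D|^2+\tfrac12|\nabla t|^2$ removes cross terms entirely, and the task reduces to bounding $|\nabla t|^2$ by solving the two divergence equations for $t_x,t_y$. Your route avoids the somewhat delicate bookkeeping of weighted Young inequalities and yields the intermediate bound $|\nabla\tau|^2\le 4|\nabla\tau^D|^2+6|\mathrm{div}\tau|^2$, which is actually sharper than the stated $1/8$ before you round up; the paper's manipulation is tuned to hit $1/8$ directly.
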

\begin{proof}

Denote by $\|v\|_0 = v\cdot v$ and $|v|_1 = \nabla v : \nabla v$ as the pointwise $L^2$ norm and $H^1$ semi-norm. 
   By direct computation, we have 
{
\begin{equation}
    \nabla\tau^D:\nabla\tau^D = \frac{1}{2}|\tau_{11} - \tau_{22}|_{1}^2 + 2|\tau_{12}|_{1}^2 = \frac{1}{2} |\tau_{11}|_{1}^2 + \frac{1}{2} |\tau_{22}|_{1}^2 + 2|\tau_{12}|_{1}^2 + \nabla \tau_{11} \cdot \nabla \tau_{22}.
\end{equation}
% It then suffices to estimate $\nabla \tau_{11} \cdot \nabla \tau_{22}.$

Since we have 
$$ \tau_{11,x} \tau_{22,x} + \tau_{11,y} \tau_{22,y} = (\tau_{11,x} + \tau_{12,y}) \tau_{22,x} + \tau_{11,y}(\tau_{12,x} + \tau_{22,y}) - \tau_{12,y}\tau_{22,x} - \tau_{12,x} \tau_{11,y}.$$
Therefore, 
$$\nabla \tau_{11}\cdot\nabla \tau_{22} \ge - \|\div \tau\|_{0}^2 - \frac{1}{8} |\tau_{22}|_{1}^2 - \frac{1}{8} |\tau_{11}|_{1}^2  - 2|\tau_{12}|_{1}^2 - \frac{1}{4}|\tau_{11}|_{1}^2 - \frac{1}{4}|\tau_{22}|_{1}^2.$$

Therefore, we finish the proof.
}
% \begin{align*}
%        \nabla\tau^D:\nabla\tau^D &= \frac{1}{2}\big((\partial_x\tau_{11}^2-(\partial_x\tau_{11}+\partial_y\tau_{12})\partial_x\tau_{22}+\partial_x\tau_{22}^2)\\&
% +(\partial_y\tau_{11}^2-\partial_y\tau_{11}(\partial_x\tau_{12}+\partial_y\tau_{22})+\partial_y\tau_{22}^2)\big)\\
% &+\frac{1}{2}(\partial_y\tau_{11}\partial_{x}\tau_{12}+\partial_y\tau_{12}\partial_x\tau_{22})+2(\partial_x\tau_{12}^2+\partial_y\tau_{12}^2).
% \end{align*}

% The basic inequality implies
% $$
% (\partial_y\tau_{11}\partial_{x}\tau_{12}+\partial_y\tau_{12}\partial_x\tau_{22})\geq -\frac{1}{4}(\partial_y\tau_{11}^2+\partial_x\tau_{22}^2)-4(\partial_x\tau_{12}^2+\partial_y\tau_{12}^2),
% $$
% and 
% \begin{align*}
%    &-\partial_x\tau_{22}(\partial_x\tau_{11}+\partial_y\tau_{12})-\partial_y\tau_{11}(\partial_x\tau_{12}+\partial_y\tau_{22})\\\geq& -4\div\tau\cdot\div\tau-\frac{1}{4}(\partial_x\tau_{22}^2+\partial_y\tau_{11}^2). 
% \end{align*}
% Then we finish the proof.
\end{proof}
A corollary of \Cref{coerecivity-lemma1} is
$$
\|\nabla\tau\|^2_{L^2}\leq 8\|\nabla\tau^D\|^2_{L^2}+8\|\div\tau\|^2_{L^2}.
$$

% Based on the above lemmas, we can now derive the $\lambda$-independent coercivity, see the following lemma.

We now introduce the following estimates for symmetric matrix-valued function $\tau$:
\begin{equation} 
\mathcal{A}\tau:\tau= (\frac{1}{2\mu}\tau^D:\tau^D+\frac{1}{4\lambda+4\mu}\mathrm{tr}(\tau)\mathrm{tr}(\tau)),~\forall \tau\in \mathbb{S}^2,
\label{Corecivity-elasticity-cont.}
\end{equation}
where $\tau^D=\tau-\frac{1}{2}\mathrm{tr}\tau\mathrm{I}$ is the traceless part of $\tau$.

% We now introduce the $\lambda$-independent coercivity as follows. 
\begin{proposition}[Coercivity]
For all $\tau\in Z$ (which implies that $\div \tau = 0$) and $\int_{\Omega}\mathrm{tr}(\tau)=0,$
\begin{equation} 
\iota^2(\nabla  \mathcal{A} \tau,\nabla  \tau)+( \mathcal{A}\tau, \tau) \gtrsim \|\tau\|^2_{\iota}.\
\label{coercivity-cont.}
\end{equation}
\end{proposition}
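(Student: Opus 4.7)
The plan is to reduce the estimate to a $\lambda$-independent coercive part controlling $\tau^D$, and then to recover the trace of $\tau$ from the deviatoric part via a duality argument that exploits both the divergence-free and zero-mean hypotheses.

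First I would expand using $\tau = \tau^D + \tfrac{1}{2}(\mathrm{tr}\tau)\,I$ to obtain the pointwise identities $\mathcal{A}\tau:\tau = \tfrac{1}{2\mu}|\tau^D|^2 + \tfrac{1}{4(\lambda+\mu)}(\mathrm{tr}\tau)^2$ and its gradient analogue. The trace contributions are non-negative but their coefficients degenerate as $\lambda\to\infty$, so I discard them to get
\begin{equation*}
a_\iota(\tau,\tau) \;\gtrsim\; \iota^2\|\nabla\tau^D\|_{L^2}^2 + \|\tau^D\|_{L^2}^2.
\end{equation*}
Combining with \Cref{coerecivity-lemma1}, which under $\div\tau = 0$ gives $\|\nabla\tau\|_{L^2}^2 \le 8\|\nabla\tau^D\|_{L^2}^2$, controls the $\iota$-weighted $H^1$-part of $\|\tau\|_\iota$. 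Since $\|\div\tau\|_{L^2}=0$ on $Z$, what remains is to prove the $\lambda$-free bound $\|\mathrm{tr}\tau\|_{L^2}\lesssim\|\tau^D\|_{L^2}$, whence $\|\tau\|_{L^2}^2 = \|\tau^D\|_{L^2}^2 + \tfrac12\|\mathrm{tr}\tau\|_{L^2}^2 \lesssim a_\iota(\tau,\tau)$.

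For that last step I would invoke a Bogovski\u{\i}/Ne\v{c}as right-inverse of the divergence on $\Omega$: since $\mathrm{tr}\tau$ has zero mean there exists $w \in H^1_0(\Omega;\mathbb{R}^2)$ with $\div w = \mathrm{tr}\tau$ and $\|\nabla w\|_{L^2}\lesssim\|\mathrm{tr}\tau\|_{L^2}$. Writing $\tau:\nabla w = \tau^D:\nabla w + \tfrac12(\mathrm{tr}\tau)(\div w)$ and using symmetry of $\tau$ together with integration by parts, the term $\int\tau:\nabla w = \int\tau:\nabla^s w = -\int\div\tau\cdot w + \int_{\partial\Omega}\tau n\cdot w$ vanishes because $\div\tau = 0$ and $w|_{\partial\Omega}=0$. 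Hence
\begin{equation*}
\|\mathrm{tr}\tau\|_{L^2}^2 \;=\; (\mathrm{tr}\tau,\div w) \;=\; -2(\tau^D,\nabla w) \;\lesssim\; \|\tau^D\|_{L^2}\,\|\mathrm{tr}\tau\|_{L^2},
\end{equation*}
and dividing by $\|\mathrm{tr}\tau\|_{L^2}$ gives the claimed bound.

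The hard part is this third step: the preceding manipulations are routine algebra, but producing a $\lambda$-robust trace bound requires using $\tau\in Z$ and $\int_\Omega\mathrm{tr}\tau=0$ simultaneously, the first to eliminate the stress-against-symmetric-gradient term after integrating by parts and the second to furnish the Bogovski\u{\i} corrector. Without this duality argument one could at best bound $\|\mathrm{tr}\tau\|_{L^2}$ by $\|\nabla\mathrm{tr}\tau\|_{L^2}$ via Poincar\'e, which would only yield an $\iota^2$-weighted and hence non-uniform estimate.
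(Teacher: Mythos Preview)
Your proof is correct and follows essentially the same route as the paper: both control $\iota^2|\tau|_{H^1}^2$ via \Cref{coerecivity-lemma1} with $\div\tau=0$, and both control $\|\tau\|_{L^2}$ by a $\lambda$-free estimate $\|\tau\|_{L^2}\lesssim\|\tau^D\|_{L^2}+\|\div\tau\|_{L^2}$ under the zero-mean hypothesis. The only difference is cosmetic: the paper cites this last trace bound as \cite[Proposition~9.1.1]{MixedFEMbook}, while you supply its standard Bogovski\u{\i} proof directly.
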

\begin{proof}

The following result can be found in \cite[Proposition 9.1.1]{MixedFEMbook}: 
    For  $\tau\in L^2(\Omega; \mathbb S^2)$ and 
    $\int_{\Omega}\mathrm{tr}(\tau)=0$, it holds that  
    $$
    \|\tau\|_{L^2}\lesssim (\|\tau^D\|_{L^2}+\|\div \tau\|_{L^2}).
    $$
    Here the hidden constant only depends on $\Omega$. Together with \eqref{Corecivity-elasticity-cont.}, we have 
  $$
  \|\tau\|_{L^2}\lesssim \|\tau^D\|_{L^2}\leq 2\mu(\mathcal{A}\tau,\tau),
  $$
  when $\tau\in Z$ and $\int_{\Omega}\mathrm{tr}(\tau)=0$. Applying gradient operator to \eqref{Corecivity-elasticity-cont.}, we obtain 
  $$
  \|\frac{1}{8}\nabla\tau\|_{L^2}\leq \|\nabla\tau^D\|_{L^2}\leq 2\mu(\nabla\mathcal{A}\tau,\nabla\tau),~\forall \tau\in Z.
  $$
Thus we finish the proof.  
\end{proof}

%{\color{red}The inf-sup condition of $b(\sigma,v)$ can be obtained following the same step in Lemma 2.4 of \cite{GuzmanAfiled, PaulyZulehner23}(I move those two reference to the contribution of FEM for linear elasticity problem ),}
The following lemma gives the inf-sup condition of $b(\sigma,v)$, which is a standard result for linear elasticity equations \cite{MixedFEMbook}.
\begin{lemma}[Inf-sup conditions]
    For any $v\in L^2(\Omega;\mathbb R^2)$, there exists a $\tau\in H^1(\Omega;\mathbb S^2)$ such that
    $$
    \mathrm{div}\tau = v,~\|\tau\|_{\iota}\lesssim  \|v\|_{L^2}.
    $$
    \label{inf-sup-cont.}
\end{lemma}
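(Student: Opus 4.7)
The plan is to realize $\tau$ as the stress of an auxiliary Stokes flow posed on a convex enlargement of $\Omega$; this makes both the identity $\div \tau = v$ and the parameter-independence of the bound transparent, and reduces the analytical work to a well-known regularity result.

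First, fix a convex polygon $\widetilde\Omega \supset \overline\Omega$ and extend $v$ by zero to $\widetilde v \in L^2(\widetilde\Omega;\mathbb R^2)$, so that $\|\widetilde v\|_{L^2(\widetilde\Omega)} = \|v\|_{L^2(\Omega)}$. On $\widetilde\Omega$, solve the stationary Stokes system: find $(u,p)\in H_0^1(\widetilde\Omega;\mathbb R^2)\times L_0^2(\widetilde\Omega)$ with $-\Delta u + \nabla p = \widetilde v$ and $\div u = 0$. Classical $H^2$-regularity for Stokes on convex polygonal domains yields $\|u\|_{H^2(\widetilde\Omega)} + \|p\|_{H^1(\widetilde\Omega)} \lesssim \|v\|_{L^2(\Omega)}$, with a constant depending only on $\widetilde\Omega$.

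Next, define $\tau := (-\mathrm{symgrad}(u) + p\,I)|_\Omega$. Then $\tau \in H^1(\Omega;\mathbb S^2)$ is symmetric matrix-valued. Because $\div u = 0$, a direct computation with the paper's convention $\mathrm{symgrad}(u) = \nabla u + (\nabla u)^T$ gives $\div(\mathrm{symgrad}(u)) = \Delta u$, whence $\div \tau = -\Delta u + \nabla p = \widetilde v = v$ on $\Omega$. The estimate $\|\tau\|_{H^1(\Omega)} \lesssim \|u\|_{H^2(\widetilde\Omega)} + \|p\|_{H^1(\widetilde\Omega)} \lesssim \|v\|_{L^2(\Omega)}$ follows at once, with constants depending only on $\Omega$ and $\widetilde\Omega$; crucially, the construction involves neither $\lambda$ nor $\iota$, so the bound is robust in both parameters. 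Since $\iota \le 1$, this upgrades to $\|\tau\|_\iota = \iota|\tau|_{H^1} + \|\div\tau\|_{L^2} + \|\tau\|_{L^2} \le \|\tau\|_{H^1(\Omega)} + \|v\|_{L^2(\Omega)} \lesssim \|v\|_{L^2(\Omega)}$, as required.

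The main obstacle is securing enough regularity of the auxiliary Stokes solution to place $\tau$ in $H^1$. On a general polygonal $\Omega$, reentrant corners can destroy $H^2$-regularity, so solving Stokes directly on $\Omega$ is awkward; the enlargement to a convex $\widetilde\Omega$ restores the standard convex-domain Stokes regularity theorem and keeps the argument entirely parameter-free. An alternative, less explicit route is to combine exactness of the smoothed stress complex over the contractible $\Omega$ with the open mapping theorem to produce a bounded right inverse of $\div$, but the Stokes-based construction above is the most transparent for our purposes and yields the $\lambda,\iota$-robust bound with essentially no additional calculation.
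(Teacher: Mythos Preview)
Your argument is correct and takes a genuinely different route from the paper. The paper solves the linear elasticity system directly on $\Omega$, setting $\tau = 2\mu\,\mathrm{sym\,grad}(u) + \lambda(\div u)I$ and invoking the $\lambda$-robust $H^2$ regularity estimate $\|u\|_{H^2} + \lambda\|\div u\|_{H^1} \lesssim \|v\|_{L^2}$, whereas you solve Stokes on a convex enlargement $\widetilde\Omega$ and restrict. Your construction has two advantages: the $\lambda$-independence is automatic because $\lambda$ never enters, and the convex enlargement sidesteps the question of $H^2$ regularity on a possibly non-convex polygon (the paper only assumes $\Omega$ is a contractible polygon, so its direct appeal to elasticity regularity tacitly needs convexity or an analogous extension trick). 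The paper's approach, on the other hand, stays closer to the physical stress tensor and avoids introducing an auxiliary domain. Both routes are standard and yield the same parameter-robust bound.
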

\begin{proof} 
For all $v\in L^2(\Omega;\mathbb S^2)$, there exists a exact solution $u\in H^2(\Omega)\cap H_0^1(\Omega)$, such that
$$
-\div(2\mu \operatorname{sym} \operatorname{grad} (u)+\lambda\div uI)= v, 
$$
and $\|u\|_{H^2}+\lambda\|\div u\|_{L^2}\leq \|v\|_{L^2}$.
Let $$\tau = 2\mu \operatorname{sym} \operatorname{grad} (u)+\lambda\div u I\in \Sigma.$$ Since  $\|\tau\|_{\iota}\leq 2\|\tau\|_{H^1}$ we finish the proof.
\end{proof}
% By \eqref{coercivity-cont.} and Lemma \ref{inf-sup-cont.}, the well-known Brezzi Theorem shows the well-posedness of \eqref{lsg-weak}. 
By the Ladyzhenskaya-Babu\v{s}ka-Brezzi theory, we establish the wellposedness of problem \eqref{lsg-weak}.
\begin{theorem}
    For all $f\in L^2(\Omega;\mathbb R^2)$, there exists a unique solution pair $(\sigma_{\iota},u_{\iota})\in H^1(\Omega;\mathbb S^2)\times L^2(\Omega;\mathbb R^2)$ of \eqref{lsg-weak}, such that
    $$
    \|\sigma_{\iota}\|_{\iota}+\|u_{\iota}\|_{L^2}\lesssim \|f\|_{L^2}.
    $$
    Here the hidden constant is independent of $\iota$ and $\lambda$.
\end{theorem}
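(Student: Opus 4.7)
The plan is to verify the four hypotheses of the Brezzi saddle-point theorem for the pair $(a_\iota,b)$ uniformly in $\iota$ and $\lambda$, and then to read off the bound directly. Continuity of $a_\iota$ on $\Sigma\times\Sigma$ with respect to $\|\cdot\|_\iota$ and of $b$ on $\Sigma\times Q$ is immediate from the definition of $\mathcal A$ and the norm. The inf-sup condition for $b$ and the kernel coercivity for $a_\iota$ are exactly \Cref{inf-sup-cont.} and \Cref{coercivity-cont.}, respectively. The one subtle point is that the coercivity statement carries an extra constraint $\int_\Omega\operatorname{tr}(\tau)=0$ beyond $\tau\in Z$, and this constraint cannot simply be discarded: taking $\tau=I$ yields $a_\iota(I,I)=|\Omega|/(\mu+\lambda)$, which degenerates as $\lambda\to\infty$, while $\|I\|_\iota^2=2|\Omega|$ stays fixed, so parameter-robust coercivity on the whole kernel is genuinely false.

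To reconcile this, I would set
$$
\hat\Sigma:=\bigl\{\tau\in H^1(\Omega;\mathbb S^2):\textstyle\int_\Omega\operatorname{tr}(\tau)=0\bigr\}
$$
and pose the saddle problem on $\hat\Sigma\times Q$ rather than $\Sigma\times Q$. On this restricted space, coercivity on the kernel is exactly \Cref{coercivity-cont.}, and the inf-sup condition survives because the explicit field $\tau=2\mu\operatorname{sym}\operatorname{grad} u+\lambda\div u\,I$ produced in the proof of \Cref{inf-sup-cont.} satisfies $\int_\Omega\operatorname{tr}(\tau)=2(\mu+\lambda)\int_\Omega\div u=0$ by $u\in H_0^1(\Omega)$. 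Brezzi's theorem then delivers a unique $(\hat\sigma,u)\in\hat\Sigma\times Q$ with $\|\hat\sigma\|_\iota+\|u\|_{L^2}\lesssim\|f\|_{L^2}$, the hidden constant depending only on the Brezzi constants and hence independent of $\iota$ and $\lambda$.

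It remains to lift the restricted result back to the original formulation. Any $\tau\in\Sigma$ decomposes as $\tau=\hat\tau+\alpha I$ with $\alpha=\tfrac{1}{2|\Omega|}\int_\Omega\operatorname{tr}(\tau)$ and $\hat\tau\in\hat\Sigma$. Since $\nabla I=0$, a direct calculation gives $a_\iota(\hat\sigma,I)=(\mathcal A\hat\sigma,I)=\tfrac{1}{2(\mu+\lambda)}\int_\Omega\operatorname{tr}(\hat\sigma)=0$ and $b(I,u)=0$, so the restricted solution also satisfies \eqref{lsg-weak} tested against arbitrary $\tau\in\Sigma$. Uniqueness in $\Sigma\times Q$ is obtained by testing the homogeneous equation with $\tau=I$, which forces any solution into $\hat\Sigma$, and then invoking uniqueness of the restricted problem. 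The main obstacle throughout is the $\lambda\to\infty$ degeneracy of $\mathcal A$ on the constant-trace mode $I$; the fix is to quotient this direction out via the zero-trace-integral constraint before applying Brezzi, after which everything is a textbook saddle-point argument.
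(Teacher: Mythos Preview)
Your proposal is correct and, in spirit, coincides with the paper's proof: both verify the Brezzi conditions (continuity, inf-sup for $b$, kernel coercivity for $a_\iota$) uniformly in $\iota,\lambda$ and read off the bound. The paper carries out the construction by hand---find $\sigma^\perp\in Z^\perp$ via the inf-sup, then $\sigma^0\in Z$ via Lax--Milgram, then $u_\iota$ by inf-sup again---while you invoke the theorem as a black box.

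The one genuine difference is your treatment of the zero-mean-trace constraint. The paper simply applies Lax--Milgram on $Z$, citing the coercivity proposition, but that proposition is stated only on $Z\cap\{\int_\Omega\operatorname{tr}\tau=0\}$; the paper does not spell out why the constant-trace mode $I\in Z$ causes no harm. You make this explicit by posing the problem on $\hat\Sigma=\{\int_\Omega\operatorname{tr}\tau=0\}$, checking that the inf-sup field from \Cref{inf-sup-cont.} already lies in $\hat\Sigma$, and then lifting back using $a_\iota(\hat\sigma,I)=0$, $b(I,u)=0$. This is the standard device for the incompressible limit in mixed elasticity, and your version is the cleaner of the two on this point.
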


\begin{proof}
Since $\|\sigma\|_{\iota}\geq \|\div\sigma\|_{L^2}$, the boundedness property of $b(\sigma,v)$ is trivial. By inf-sup stability, there exists unique $\sigma^{\perp}\in Z^{\perp}$, the orthogonal complement of $Z$ in $H^1(\Omega;\mathbb S^2)$ with respect to $\iota$ norm, such that
$$
b(\sigma^{\perp},v)=(f,v),~\forall v\in Q, \text{ and } \|\sigma^{\perp}\|_{\iota}\lesssim \|f\|_{L^2}.
$$
By \eqref{Corecivity-elasticity-cont.}, and $\|\mathrm{tr}(\sigma)\|_{L^2}\leq 2\|\sigma\|_{L^2},$ we have  
$$
a_{\iota}(\sigma,\tau)\leq (\frac{1}{2\mu}+\frac{1}{(\lambda+\mu)})(\|\sigma\|_{L^2}\|\tau\|_{L^2}+\iota^2\|\nabla\sigma\|_{L^2}\|\nabla\tau\|_{L^2}),~\forall~\tau,\sigma\in \Sigma.
$$
By Lax-Milgram Theorem, there exists $\sigma^0\in Z$, such that
$$
a_{\iota}(\sigma^0,\tau_z)=-a_{\iota}(\sigma^{\perp},\tau_z),~\forall~\tau_z\in Z,
$$
and  $\|\sigma^0\|_{\iota}\lesssim \|\sigma^{\perp}\|_{\iota}.$ Then $\sigma_{\iota} = \sigma^{\perp}+\sigma^0$ is the unique solution of \eqref{lsg-weak}. Finally, by the inf-sup condition again and the first equation of \eqref{lsg-weak}, there exists $u_{\iota}\in L^2(\Omega;\mathbb R^2)$, such that
$$
b(\tau,u_{\iota})=-a_{\iota}(\sigma_{\iota},\tau),~\forall~\tau\in Z^{\perp},
$$
and $\|u_{\iota}\|_{L^2}\lesssim\|\sigma_{\iota}\|_{\iota}.$ Thus we finish the proof.
%The inf-sup condition stated in Lemma 2.3 and the coercivity property given in Lemma 2.2 lead to this proof.
\end{proof}

Now we derive the asymptotic result of the solution. Let $(\sigma_{\iota}, u_{\iota})$ solves \eqref{lsg-weak}, and $(u_0,\sigma_0)$ solves the following equation:
\begin{equation}
\begin{cases}
    (\mathcal A\sigma_0,\tau)+b(\tau,u_0)=0,& \forall\tau\in H(\div;\Omega),\\
   b(\sigma_0,v)=(f,v), &\forall v\in L^2(\Omega).
\end{cases}
\label{eq:iota0}
\end{equation}
% We assume that $f \in L^2$ such that $\sigma, u \in H^2$. 
% Then we have the following result:
\begin{proposition}
\label{prop:bdlayer-type}
Assume $\sigma_0\in H^2(\Omega;\mathbb{S}^2),~u_0 \in H^2(\Omega;\mathbb{R}^2),$ then
we have $\|\sigma_{\iota} - \sigma_0\|_{\iota} + \|u_{\iota} - u_0\|_{L^2} = O(\iota^{3/2}).$
\end{proposition}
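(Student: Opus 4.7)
The plan is to subtract the perturbed and limit variational formulations, exploit the kernel structure of the error, and extract the $\iota^{3/2}$ rate from the mismatch between the natural boundary condition $\partial_n(\mathcal{A}\sigma_\iota)=0$ implicit in \eqref{lsg-weak} and the fact that $\partial_n(\mathcal{A}\sigma_0)$ need not vanish on $\partial\Omega$. Testing \eqref{lsg-weak} and \eqref{eq:iota0} against the same $\tau\in H^1(\Omega;\mathbb{S}^2)\subset H(\div;\Omega)$ and subtracting yields the error system
\begin{align*}
a_{\iota}(e_\sigma,\tau)+(\div\tau,e_u) &= -\iota^2(\nabla\mathcal{A}\sigma_0,\nabla\tau),\\
(\div e_\sigma,v) &= 0,
\end{align*}
where $e_\sigma:=\sigma_\iota-\sigma_0$ and $e_u:=u_\iota-u_0$. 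The second line forces $e_\sigma\in Z$, so choosing $\tau=e_\sigma$ cancels the coupling term and reduces the problem to controlling $a_{\iota}(e_\sigma,e_\sigma)$ against a single explicit right-hand side.

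The heart of the argument is the integration by parts
\begin{equation*}
-\iota^2(\nabla\mathcal{A}\sigma_0,\nabla e_\sigma)=\iota^2(\Delta\mathcal{A}\sigma_0,e_\sigma)-\iota^2\int_{\partial\Omega}\partial_n(\mathcal{A}\sigma_0):e_\sigma\,ds.
\end{equation*}
The volume term is bounded by $\iota^2\|\sigma_0\|_{H^2}\|e_\sigma\|_{L^2}\lesssim \iota^2\|\sigma_0\|_{H^2}\|e_\sigma\|_{\iota}$. For the surface term I would combine the multiplicative trace inequality $\|e_\sigma\|_{L^2(\partial\Omega)}^2\lesssim \|e_\sigma\|_{L^2}\|e_\sigma\|_{H^1}$ with $\|e_\sigma\|_{H^1}\lesssim \iota^{-1}\|e_\sigma\|_{\iota}$ (using $\div e_\sigma=0$) and $\|e_\sigma\|_{L^2}\le\|e_\sigma\|_{\iota}$ to obtain $\|e_\sigma\|_{L^2(\partial\Omega)}\lesssim \iota^{-1/2}\|e_\sigma\|_{\iota}$. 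This scaled trace bound is precisely where the half power of $\iota$ enters. Using $\sigma_0\in H^2$ to control $\|\partial_n\mathcal{A}\sigma_0\|_{L^2(\partial\Omega)}$ by $\|\sigma_0\|_{H^2}$, the boundary term is therefore bounded by $\iota^{3/2}\|\sigma_0\|_{H^2}\|e_\sigma\|_{\iota}$. Invoking the coercivity \eqref{coercivity-cont.} on the left gives $\|e_\sigma\|_{\iota}^2\lesssim \iota^{3/2}\|\sigma_0\|_{H^2}\|e_\sigma\|_{\iota}$, hence $\|e_\sigma\|_{\iota}=O(\iota^{3/2})$.

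For the displacement error I would apply the continuous inf-sup condition of \Cref{inf-sup-cont.}: $\|e_u\|_{L^2}\lesssim \sup_\tau b(\tau,e_u)/\|\tau\|_{\iota}$. From the first error equation, $b(\tau,e_u)=-a_{\iota}(e_\sigma,\tau)-\iota^2(\nabla\mathcal{A}\sigma_0,\nabla\tau)$. The first term is $\lesssim \|e_\sigma\|_{\iota}\|\tau\|_{\iota}=O(\iota^{3/2})\|\tau\|_{\iota}$ by the bound just derived and a Cauchy--Schwarz estimate on $a_{\iota}$; the second is handled by repeating the integration-by-parts and scaled trace argument with $\tau$ in place of $e_\sigma$, again producing an $\iota^{3/2}\|\sigma_0\|_{H^2}\|\tau\|_{\iota}$ bound. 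Combining yields $\|e_u\|_{L^2}=O(\iota^{3/2})$.

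The main obstacle I anticipate is a bookkeeping issue in applying \eqref{coercivity-cont.}: the stated coercivity additionally requires $\int_\Omega\operatorname{tr}(e_\sigma)=0$, which is not automatic for the error. I would handle this by splitting $e_\sigma=\tilde e_\sigma+cI$ with $c$ chosen so that $\tilde e_\sigma$ has vanishing trace integral; since $\div(cI)=0$, one has $\tilde e_\sigma\in Z$ as well, and the constant part contributes only an easily absorbed low-order term to $a_{\iota}(e_\sigma,e_\sigma)$, so the desired inequality for $\|e_\sigma\|_{\iota}$ is recovered by a standard absorption argument.
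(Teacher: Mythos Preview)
Your approach is essentially the paper's: form the error system, use $e_\sigma\in Z$, test with $e_\sigma$, integrate by parts, and extract the $\iota^{3/2}$ from the multiplicative trace inequality applied to the boundary residual $\partial_n(\mathcal{A}\sigma_0)$. Two small remarks. First, your final obstacle is a non-issue: testing the first equation of \eqref{lsg-weak} (respectively \eqref{eq:iota0}) with $\tau=I$ gives $(\mathcal{A}\sigma_\iota,I)=0$ (respectively $(\mathcal{A}\sigma_0,I)=0$), which means $\int_\Omega\operatorname{tr}\sigma_\iota=\int_\Omega\operatorname{tr}\sigma_0=0$ and hence $\int_\Omega\operatorname{tr}e_\sigma=0$ automatically; no splitting is needed. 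Second, for the displacement the paper takes a slightly shorter path: it picks $\xi$ with $\div\xi=e_u$ and $\|\xi\|_{H^1}\lesssim\|e_u\|_{L^2}$ (from \Cref{inf-sup-cont.}) and bounds $\iota^2(\nabla\mathcal{A}\sigma_0,\nabla\xi)$ directly by $\iota^2|\sigma_0|_{H^1}\|\xi\|_{H^1}\lesssim\iota^2\|e_u\|_{L^2}$, avoiding a second integration by parts; your route via repeating the scaled trace argument also works but is unnecessary here.
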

\begin{proof}
    Clearly, for all $\tau \in H^1(\Omega;\mathbb S^2)$ and $v \in L^2(\Omega;\mathbb R^2)$ we have 
\begin{equation}
\begin{cases}
    \iota^2(\nabla \mathcal A( \sigma_{\iota} - \sigma_0),\nabla\tau) + (\mathcal A( \sigma_{\iota} - \sigma_0), \tau) +b(\tau,u_{\iota} - u_0)= - \iota^2(\nabla \mathcal A\sigma_0, \nabla \tau)\\
   b(\sigma_{\iota} - \sigma_0,v)=0.
\end{cases}
\label{equation:bdlayer-diff}
\end{equation}
Thus, we have $\div(\sigma_{\iota} - \sigma_0) = 0.$ Let $\tau = \sigma_{\iota} - \sigma_0,$ we then have 
$$\iota^2(\nabla \mathcal A(\sigma_\iota - \sigma_0), \nabla(\sigma_{\iota} - \sigma_0)) + (\mathcal A(\sigma_{\iota} - \sigma_0),\sigma_{\iota} - \sigma_0) = -\iota^2 (\nabla \mathcal A \sigma_0, \nabla(\sigma_{\iota} - \sigma_0)). $$
This implies 
\begin{equation}\begin{split} 
\iota^2|\sigma_{\iota} - \sigma_0|_{H^1}^2 + \|\sigma_{\iota} - \sigma_0\|_{L^2}^2 \lesssim&  -\iota^2 (\nabla \mathcal A \sigma_0,\nabla( \sigma_{\iota} - \sigma_0)) 
\\ = & \iota^2(\Delta \mathcal A \sigma_0, \sigma_{\iota} - \sigma_0)  -  \iota^2 \int_{\partial \Omega} \partial_{n} (\mathcal A\sigma_0):(\sigma_{\iota} - \sigma_0) \\ 
\lesssim & \iota^2 \|\partial_n \sigma_0\|_{L^2(\partial \Omega)} \|\sigma_{\iota} - \sigma_0 \|_{L^2(\partial \Omega)} + \iota^2 |\sigma_0|_{H^2}\|\sigma_{\iota} - \sigma_0\|_{L^2}.
\end{split}
\end{equation}
By trace theorem, we then have 
$$\iota^2|\sigma_{\iota} - \sigma_0|_{H^1}^2 + \|\sigma_{\iota} - \sigma_0\|_{L^2}^2 \lesssim \iota^2 \|\partial_n \sigma_0\|_{L^2(\partial \Omega)}\|\sigma_{\iota} - \sigma_0\|_{H^1}^{1/2}\|\sigma_{\iota} - \sigma_0\|_{L^2}^{1/2} + \iota^2 |\sigma_0|_{H^2}\|\sigma_{\iota} - \sigma_0\|_{L^2}.$$
By Cauchy-Schwarz inequality, we then have 
$$ \iota^2|\sigma_{\iota} - \sigma_0|_{H^1}^2 + \|\sigma_{\iota} - \sigma_0\|_{L^2}^2 \lesssim \iota^3\|\partial_n \sigma_0\|_{L^2(\partial \Omega)}^2 + \iota^4|\sigma_0|_{H^2}^2,$$
which is equivalent to 
$$\iota|\sigma_{\iota} - \sigma_0|_{H^1} + \|\sigma_{\iota} - \sigma_0\|_{L^2} \lesssim \iota^{3/2} \|\partial_n \sigma_0\|_{L^2(\partial \Omega)} + \iota^2 |\sigma_0|_{H^2}.$$

Next, we give the estimates on the term $u_{\iota} - u_0.$ Take $\xi \in H^1(\Omega; \mathbb S^2)$ such that $\|\xi\|_{H^1} \lesssim \|u_{\iota} - u\|_{L^2}$ and $\div \xi = u_{\iota} - u_0$. Therefore, we have 
$$\iota^2(\nabla \mathcal A(\sigma_\iota - \sigma_0), \nabla \xi) + (\mathcal A(\sigma_{\iota} - \sigma_0),\xi) + \|u_{\iota} - u_0\|_{L^2} = -\iota^2 (\nabla \mathcal A \sigma_0, \nabla \xi). $$
By Cauchy-Schwarz inequality, we then have
$$\|u_{\iota} - u_0\|_{L^2}^2 \lesssim \iota^2 |\sigma_{\iota}|_{H^1} \|\xi\|_{H^1}  + \|\sigma_{\iota} - \sigma_{0}\|_{L^2} \|\xi\|_{L^2},$$
which implies 
$$  \|u_{\iota} - u_0\|_{L^2} \lesssim \iota^2 |\sigma_{\iota}|_{H^1} + \|\sigma_{\iota} - \sigma_{0}\|_{L^2} \lesssim \iota^{3/2}\|\partial_n \sigma_0\|_{L^2{\partial \Omega}} + \iota^2|\sigma_0|_{H^1} + \iota^3|\sigma_0|_{H^2}.$$

\end{proof}

\section{Stable Finite Element Pairs for stress gradient elasticity}
\label{sec:discrete}
This section constructs a family of stable pairs for stress gradient elasticity. Specifically, we seek a stress-displacement pair $(\Sigma_h, Q_h) \subseteq H^1(\Omega; \mathbb{S}^2) \times L^2(\Omega; \mathbb{R}^2)$ such that the divergence operator $\div : \Sigma_h \to Q_h$ is surjective and admits a bounded (generalized) inverse. That is, for each $q_h \in Q_h$, there exists $\sigma_h \in \Sigma_h$ satisfying $\div \sigma_h = q_h$ and $\|\sigma_h\|_{H^1} \lesssim \|q_h\|_{L^2}$. For such a pair, we can consider the discrete problem
\begin{equation}
\begin{cases}
    a_{\iota}(\sigma_h,\tau_h)+b(\tau_h,u_h)=0,& \forall\tau_h\in\Sigma_h,\\
   b(\sigma_h,v_h)=(f,v_h), &\forall v_h\in Q_h.
\end{cases}
\label{lsg-discrete}
\end{equation}
The wellposedness and the error estimates of the discrete problem are discussed in \Cref{sec:error}. 

% As indicated in the introductory part, the primary objective of this section is to explore the CG-DG pair (Scott--Vogelius pair) in the context of stress gradient elasticity and to identify mesh conditions ensuring stability, which is the key to the error analysis.  

In this section, we introduce the CG-DG pairs as follows. Define the following stress spaces
$$\Sigma^0_{k} = \{  \sigma \in C^0(\mathcal T;\mathbb S^2) : \sigma|_T \in P_{k}(T;\mathbb S^2),  \sigma \in C^0(\mathcal V)\},$$
$$\Sigma^1_{k} = \{ \sigma \in C^0(\mathcal T;\mathbb S^2) : \sigma|_T \in P_{k}(T;\mathbb S^2),  \sigma \in C^1(\mathcal V)\},$$
$$\Sigma^2_{k} = \{ \sigma \in C^0(\mathcal T;\mathbb S^2) : \sigma|_T \in P_{k}(T;\mathbb S^2), \sigma \in C^2(\mathcal V)\}.$$
Here, $\sigma \in C^0(\mathcal V)$ means that $\sigma$ is continuous at vertices, $\sigma \in C^{r}(\mathcal V)$ means that $\nabla^{r'} \sigma$ is continuous at vertices for $0 \le r' \le r.$

Accordingly, we define the following displacement space.  
$$Q^{-1}_{k-1} = \{ q \in L^2(\mathcal T;\mathbb R^2) : q|_T \in P_{k-1}(T;\mathbb R^2)\},$$
$$Q^0_{k-1} = \{ q\in L^2(\mathcal T;\mathbb R^2) :  q|_T \in P_{k-1}(T;\mathbb R^2), q \in C^0(\mathcal V)\},$$
$$Q^1_{k-1} = \{ q \in L^2(\mathcal T;\mathbb R^2) :  q|_T \in P_{k-1}(T;\mathbb R^2),q\in C^1(\mathcal V)\}.$$

To address the rank deficiency and the stability issue caused by the geometric structure, we now formally define two types of singularities.

\begin{definition}[Type I Singularity]
A vertex $v$ is said to have a Type I singularity if and only if it is an interior vertex and all edges incident to $v$ lie on at most two distinct lines.
\end{definition}

Such singularities have been thoroughly investigated in the Stokes' pair \cite{ScottVogelius85,GuzmanScott19}, and the construction of the $C^1$ spline space on general triangular meshes \cite{MorganScott75}.

The second type of singularity is the three-line-cross singularity (Type II), which is not fully addressed in the literature.

\begin{definition}[Type II Singularity]
A vertex $v$ is said to have a Type II singularity if and only if it is an interior vertex and all edges incident to $v$ lie on exactly three distinct lines. A singular vertex is called \emph{non-degenerate} if it is surrounded by six elements (see \Cref{fig:typeII-non}); otherwise, it is called \emph{degenerate} (see \Cref{fig:typeII-de}).
\end{definition}

\begin{figure}[htbp]
\begin{tikzpicture}[scale=1.5]
		\draw[thick] (0,0) -- (1,0);
		\draw[thick] (0,0) -- (1/2,1.732/2);
		\draw[thick] (1/2,1.732/2) -- (1,0);
		\draw[thick] (0,0) -- (1/2,-1.732/2);
		\draw[thick] (1,0) -- (1/2,-1.732/2);
		\draw[thick] (1,0) -- (3/2,1.732/2);
		\draw[thick] (2,0) -- (3/2,1.732/2);
		\draw[thick] (2,0) -- (1,0);
		\draw[thick] (1/2,1.732/2) -- (3/2,1.732/2);
		\draw[thick] (1/2,-1.732/2) -- (3/2,-1.732/2);
		\draw[thick] (1,0) -- (3/2,-1.732/2);
		\draw[thick] (2,0) -- (3/2,-1.732/2);
	\end{tikzpicture}
	\caption{Non-degenerate Type II singularity.}
    \label{fig:typeII-non}
\end{figure}
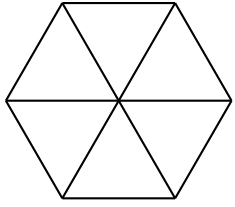

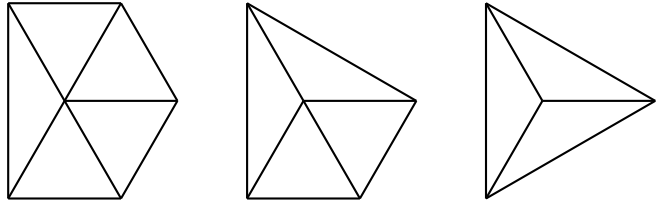
\begin{figure}[htbp]
		\begin{tikzpicture}[scale=1.5]
%\draw[thick] (0,0) -- (1/2,1.732/2);
\draw[thick] (1/2,1.732/2) -- (1,0);
\draw[thick] (1/2,1.732/2) -- (1/2,-1.732/2);
%\draw[thick] (0,0) -- (1/2,-1.732/2);
\draw[thick] (1,0) -- (1/2,-1.732/2);
\draw[thick] (1,0) -- (3/2,1.732/2);
\draw[thick] (2,0) -- (3/2,1.732/2);
\draw[thick] (2,0) -- (1,0);
\draw[thick] (1/2,1.732/2) -- (3/2,1.732/2);
\draw[thick] (1/2,-1.732/2) -- (3/2,-1.732/2);
\draw[thick] (1,0) -- (3/2,-1.732/2);
\draw[thick] (2,0) -- (3/2,-1.732/2);
		\end{tikzpicture}
		\qquad 
				\begin{tikzpicture}[scale=1.5]
%\draw[thick] (0,0) -- (1/2,1.732/2);
\draw[thick] (1/2,1.732/2) -- (1,0);
\draw[thick] (1/2,1.732/2) -- (1/2,-1.732/2);
%\draw[thick] (0,0) -- (1/2,-1.732/2);
\draw[thick] (1,0) -- (1/2,-1.732/2);
%\draw[thick] (1,0) -- (3/2,1.732/2);
%\draw[thick] (2,0) -- (3/2,1.732/2);
\draw[thick] (2,0) -- (1,0);
\draw[thick] (2,0) -- (1/2,1.732/2);
%\draw[thick] (1/2,1.732/2) -- (3/2,1.732/2);
\draw[thick] (1/2,-1.732/2) -- (3/2,-1.732/2);
\draw[thick] (1,0) -- (3/2,-1.732/2);
\draw[thick] (2,0) -- (3/2,-1.732/2);
		\end{tikzpicture}
\qquad 
				\begin{tikzpicture}[scale=1.5]
%\draw[thick] (0,0) -- (1/2,1.732/2);
\draw[thick] (1/2,1.732/2) -- (1,0);
\draw[thick] (1/2,1.732/2) -- (1/2,-1.732/2);
%\draw[thick] (0,0) -- (1/2,-1.732/2);
\draw[thick] (1,0) -- (1/2,-1.732/2);
%\draw[thick] (1,0) -- (3/2,1.732/2);
%\draw[thick] (2,0) -- (3/2,1.732/2);
\draw[thick] (2,0) -- (1,0);
\draw[thick] (2,0) -- (1/2,1.732/2);
\draw[thick] (2,0) -- (1/2, -1.732/2);
%\draw[thick] (1/2,1.732/2) -- (3/2,1.732/2);
%\draw[thick] (1/2,-1.732/2) -- (3/2,-1.732/2);
%\draw[thick] (1,0) -- (3/2,-1.732/2);
%\draw[thick] (2,0) -- (3/2,-1.732/2);
		\end{tikzpicture}
\caption{Degenerate Type II singularity. The numbers of the surrounding element of a Type II singular vertex can be 5,4,3. We denote them as Type II-1, II-2, and II-3, accordingly.}
\label{fig:typeII-de}
	\end{figure}

In this section, we establish a key result on the surjectivity of the divergence operator for the stress-displacement pairs proposed above in stress gradient elasticity. The main theorem presented below characterizes the precise geometric conditions under which the divergence operator $\div : \Sigma_h \to Q_h$ is surjective, ensuring the existence of a generalized inverse operator. 
%
%This result is pivotal to guaranteeing the stability of the CG-DG pair and extends the framework of stable finite element pairs, building on the insights from the Stokes complex and Scott-Vogelius elements.
\begin{theorem}
\label{thm:overall}
Suppose that $k \ge 7$. Then the following statements hold:
\begin{enumerate}
	\item (Unconditional Stability) For the pair $(\Sigma_k^2, Q_{k-1}^1)$, the divergence operator $\div : \Sigma_k^2 \to Q_{k-1}^1$ is surjective for any triangulation $\mathcal{T}$. 
	\item For the Hermite pair $(\Sigma_k^1, Q_{k-1}^0)$, the divergence operator $\div : \Sigma_k^1 \to Q_{k-1}^0$ is surjective for any triangulation $\mathcal{T}$ that has no Type I singularities.
	\item For the Lagrange pair $(\Sigma_k^0, Q_{k-1}^{-1})$, the divergence operator $\div : \Sigma_k^0 \to Q_{k-1}^{-1}$ is surjective for any triangulation $\mathcal{T}$ that has no Type I or Type II singularities.
\end{enumerate}
For a general triangulation $\mathcal{T}$, the following quantitative results hold:
\begin{enumerate}
	\item[(2')] The rank deficiency of the pair $(\Sigma_k^1, Q_{k-1}^0)$ is given by \begin{equation}\dim Q_{k-1}^0 - \dim \div \Sigma_k^1 = |\mathcal{V}_I|.\end{equation}
	\item[(3')] The rank deficiency of the pair $(\Sigma_k^0, Q_{k-1}^{-1})$ is given by \begin{equation}\dim Q_{k-1}^{-1} - \dim \div \Sigma_k^0 = |\mathcal{V}_{II}| + 3  |\mathcal{V}_I|.\end{equation}
\end{enumerate}
Here, $\mathcal{V}_I$ denotes the set of vertices with Type I singularities, and $\mathcal{V}_{II}$ denotes the set of vertices with Type II singularities. See \Cref{tab:rank-def} for a summary and comparison with the Stokes pair.
\end{theorem}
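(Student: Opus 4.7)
The plan is to reduce everything to the quantitative statements $(2')$ and $(3')$; the surjectivity claims $(1)$--$(3)$ then follow by specializing to $|\mathcal{V}_I|=|\mathcal{V}_{II}|=0$ and, for $(1)$, by verifying that the $C^2$ vertex continuity absorbs all remaining obstructions. The strategy is to establish matching upper and lower bounds on $\dim Q_{k-1}^{r-1} - \dim \div \Sigma_k^r$: the lower bound through explicit cokernel functionals localized at singular vertices, and the upper bound through Euler-type dimension counting applied to a discrete analog of the smoothed stress complex.

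For the lower bound, I would construct at each singular vertex $v$ a family of linear functionals $\ell_v^{(j)} : Q_{k-1}^{-1} \to \mathbb{R}$ annihilating $\div \Sigma_k^0$. In local coordinates at $v$, the condition $\sigma \in C^0(\mathcal{V})$ forces the tangential derivatives of $\sigma$ along each edge-line at $v$ to agree across $v$; decomposing $(\div \sigma)|_T(v)$ over the triangles in $\st(v)$ and summing with weights determined by the edge geometry, one obtains identities satisfied identically by every image element. For a Type I vertex (two edge-lines), this yields $3=\dim\mathbb{S}^2$ independent relations; for a Type II vertex (three edge-lines), the richer tangential data leaves exactly $1$ residual relation, with the symmetry $\sigma_{12}=\sigma_{21}$ playing a central role. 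Disjoint supports across distinct vertices give linear independence. For the Hermite pair $\Sigma_k^1$, the $C^1$ vertex condition makes two of the three Type I functionals hold automatically and eliminates the Type II functional entirely, leaving exactly $|\mathcal{V}_I|$ residual obstructions; for $\Sigma_k^2$, the $C^2$ vertex condition kills the remaining Type I obstruction, yielding the unconditional surjectivity asserted in $(1)$.

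For the upper bound, I would set up the discrete analog of \eqref{eq:complex-sobolev},
\begin{equation*}
W_h^r \xrightarrow{\curl\curl^T} \Sigma_k^r \xrightarrow{\div} Q_{k-1}^{r-1} \to 0,
\end{equation*}
where $W_h^r$ is a scalar piecewise $P_{k+2}$ space with $C^{r+2}$ smoothness at vertices and $C^{r+1}$ continuity across edges. The hypothesis $k\ge 7$ ensures enough interior bubble DoFs for such a scalar space to exist, echoing the Morgan--Scott and \v{Z}en\'{\i}\v{s}ek thresholds. Assuming exactness at the middle term, the signed sum of DoF counts via Euler's formula $|\mathcal{T}|-|\mathcal{E}|+|\mathcal{V}|=1$ produces a codimension that matches the lower bound term by term.

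The main obstacle will be establishing the exactness $\ker \div \subseteq \curl\curl^T W_h^r$: given $\sigma\in \Sigma_k^r$ with $\div\sigma=0$, one must construct a globally defined Airy potential of the prescribed vertex and edge smoothness. I would proceed by a standard peel-off argument---removing interior bubbles, then edge DoFs, and finally reducing to a local problem on each vertex star with the two singularity types handled separately---appealing to the discrete smoothed scalar complexes in \cite{HuLinWuYuan25} to close the vertex step. The bounded right inverse required for parameter-robust stability is then obtained by gluing element-wise polynomial right-inverses through a Fortin-type construction, using the continuous inf-sup of \Cref{inf-sup-cont.} on macro-patches to preserve the $H^1$-norm bound.
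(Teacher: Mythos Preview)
Your lower-bound strategy---constructing cokernel functionals at singular vertices---is correct and is exactly what the paper does (Lemmas~3.1, 3.5, 3.6). The gap is in your upper bound. You propose to compute the codimension by ``DoF counts via Euler's formula'' on the discrete complex $W_h^r \xrightarrow{\curl\curl^T} \Sigma_k^r \xrightarrow{\div} Q_{k-1}^{r-1}$, but for $r<2$ the potential space $W_h^r$ is a $C^2$ superspline space with $C^{r+2}$ vertex continuity and is \emph{not} a finite element space: its dimension cannot be obtained by local DoF counting. In fact, by Alfeld--Schumaker (and its $C^{2,3}$ analog), $\dim W_h^r$ already contains the correction terms $+3|\mathcal{V}_I|+|\mathcal{V}_{II}|$ (for $r=0$) or $+|\mathcal{V}_I|$ (for $r=1$), arising precisely from the singular vertices you are trying to detect. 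If you invoke those formulas you have outsourced the entire difficulty; if you do naive DoF counting you get codimension $\dim\mathbf{P}_1=3$ in every case, with no singular-vertex contribution at all. The step you flag as the main obstacle---exactness at the middle term---is in fact the easy part: a divergence-free $\sigma\in\Sigma_k^r$ has an $H^3$ Airy potential which automatically inherits the $C^{r+2}$ vertex smoothness.

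The paper sidesteps the spline dimension issue by a constructive cascade rather than a global count. It first settles $r=2$ directly, since $U_{k+2}^4$ \emph{is} a finite element space (this is where $k\ge 7$ enters), and the bubble complex gives a bounded right inverse. For $r=1,0$ no dimension count is used: instead, at each vertex $z$ one builds an explicit local $\sigma_z\in\Sigma_k^r$ supported in $\st(z)$ whose divergence matches the prescribed vertex data of $q$ (Lemmas~3.3--3.4 for $r=1$, Lemmas~3.7--3.9 and the appendix for $r=0$). The necessary conditions are exactly what make this local construction solvable; subtracting $\sum_z\sigma_z$ pushes $q$ into $Q_{k-1}^r$, reducing to the $r+1$ case. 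This simultaneously proves sufficiency, gives the rank-deficiency count, and delivers the $H^1$-bounded right inverse with explicit dependence on $\Theta_I,\Theta_{II}$---something a pure dimension argument would not provide.
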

\begin{proof}
(1) comes from \Cref{thm:c2}.  (2) and (2') come from \Cref{thm:hermite}. (3) and  (3') come from  \Cref{thm:lagrange}.
\end{proof}

\begin{remark}
The polynomial degree requirement $k \ge 7$ is imposed for technical reasons. In this paper, our primary focus is to classify singular vertices, while determining the optimal polynomial degree requirement is not our main objective at present and will be addressed in future work. Especially, reducing the degrees of freedom may require additional conditions, see \cite{GuzmanScott18}.
\end{remark}

\begin{remark}
We make a comparison of the proposed pairs with the Stokes pairs. The pairs of (2) and (2') can be regarded as matrix-valued versions of the Falk--Neilan Stokes pair, while the results from items (3) and  (3') can be regarded as matrix-valued versions of the Scott--Vogelius Stokes pair. This can help the readers compare the results better. 
\end{remark}

\subsection*{Sketch of Proofs}
The proof of the theorem above relies on a detailed characterization of $\div \Sigma_k^r$. In the following, we use three subsections to provide this characterization and establish the stability results for the pair $(\Sigma_k^r, \div \Sigma_k^r)$ for $r = 2, 1, 0$, respectively.

The theorem will be obtained item by item, and for clarity we show the sketch of the overall proof here. To establish unconditional stability, we construct the discrete smoothed stress complexes, starting from $C^2$ scalar element spaces. Based on the bubble complexes and the finite element complexes, we can derive the unconditional stability of the global function space pair $(\Sigma_k^2, Q_k^1)$. To remove the extra continuity at vertices, we consider the function data on the local vertex patches, and make suitable modifications to proceed. The geometric constraints appear during the modification: When dealing with $\Sigma_k^1$, only type I singularities appear; when dealing with $\Sigma_k^0$, both type I and type II singularities appear.

Therefore, the proof relies on analyzing data on the vertex patches, calling for more notations. For a vertex $z \in \mathcal{V}$, denote by $\st(z)$ the patch of $z$, namely, it consists of elements surrounding $z$. The elements $T_i$ in $\st(z)$ are numbered counterclockwise as $T_1, T_2, \dots, T_m$, where $m$ is the total number of elements in the patch. We use $y_s$ to denote the vertex shared by elements $T_s$ and $T_{s+1}$, and $t_s$ ($n_s := t_s^{\perp}$, resp.) to represent the unit tangent (normal, resp.) vector of the directed edge $\overrightarrow{zy_s}$. When the vertex is interior, the subscription is periodically labeled. See \Cref{fig:star-z} for an illustration. Additionally, we define $h_s$ as the height with respect to vertex $y_s$. Specifically, we have 
\begin{equation}
h_s := 
\begin{cases}
	|\overline{zy_s}| \sin \theta_s & \text{ in } T_{s}, \\
	|\overline{zy_s}| \sin \theta_{s+1} & \text{ in } T_{s+1}.
\end{cases}
\end{equation}
When focusing on a single element $T_s$, we may denote $t_-$ as $t_s$ and $t_+$ as $t_{s+1}$ if the context permits.

For each vertex $z \in \mathcal{V}$, we define the nodal basis function as 
\[
\psi_z(v) = \begin{cases} 
1 & \text{if } v = z, \\
0 & \text{if } v \neq z.
\end{cases}
\]
Within each element $T$, the restriction of $\psi_z$ to $T$ is the barycentric coordinate function associated with vertex $z$. Similarly, we use $n_{\pm}$ and $\psi_{\pm}$ when the context permits.

\begin{figure}[htbp]
		\begin{tikzpicture}[scale=1.5]
		\draw[dashed] (0,0)--(1,1.8);
		\draw  (1,1.8)--(3,0);
		\draw   (0,0)--(3,0);
\draw   (1.5,-1)--(1.5,0);
\draw   (1.5,0)--(1,1.8);
	\draw   (1.5,-1)--(3,0);
    \draw   (1.5,-1)--(0,0);

		\fill (0,0) circle (1.5pt); 
		\fill (1,1.8) circle (1.5pt); 
		\fill (3,0) circle (1.5pt); 
          \fill (1.5,0) circle (1.5pt);
\fill (1.5,-1) circle (1.5pt);
		\node [right=] at (3,0) {$y_s$};
        \node [right=] at (1.5,-0.2) {$z$};
        \node [right=] at (1.7,-0.5) {$T_s$};
        \node [right=] at (1.7,0.5) {$T_{s+1}$};
        \node [right=] at (0.8,0.5) {};
        \node [right=] at (0.8,-0.5) {$T_{s-1}$};
            \node [below=] at (2.3,0) {$t_{s}$};
              \node [left=] at (1.8,0.8) {$t_{s+1}$};
                \node [below=] at (1.5,-1) {$y_{s-1}$};
                  \node [left=] at (1.6,-0.3) {$t_{s-1}$};
	\end{tikzpicture}
    \caption{An illustration for the notations on the patch $z$.}
    \label{fig:star-z}
	
\end{figure}
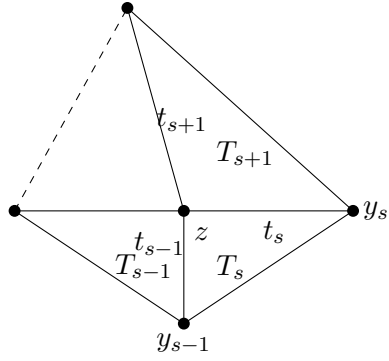

Using the notations introduced above, we can define a measure to quantify the extent to which a triangulation is singular or nearly singular with respect to Type I or Type II singularities. 
For any interior vertex $x \in \mathcal{V}$, we define
\begin{equation}
\label{eq:theta_I}
\Theta_I(x) := \max \{ |\sin(\theta_i + \theta_{i+1})| \},
\end{equation}
where the maximum is taken over all $i$ in the patch $\st x$. It can be verified that $\Theta_I(x) = 0$ if and only if $x \in \mathcal{V}_I$. Similarly, we define
\begin{equation}
\label{eq:theta_II}
\Theta_{II}(x) := \max \{ \min ( |\sin(\theta_i + \theta_{i+1} + \theta_{i+2})|, |\sin(\theta_i + \theta_{i+1})|, |\sin(\theta_{i+1} + \theta_{i+2})| ) \},
\end{equation}
for any interior vertex $x \in \mathcal{V}$, where the maximum is taken over all $i$ in the patch $\st x$. It can be verified that $\Theta_{II}(x) = 0$ if and only if $x \in \mathcal{V}_I$ or $x \in \mathcal{V}_{II}$.  

In this section, the hidden constants in $\lesssim$ might depend on the region $\Omega$, the polynomial degree $k$, and the shape regularity constant, but independent with the triangulation $\mathcal T$. The dependency on singularities in triangulations will be given explicitly via $\Theta_I(x)$ or $\Theta_{II}(x)$. 

\begin{remark}
The quantities mentioned above are defined only for the interior vertices. For the boundary vertices, they do not contribute to the cokernels of divergence operators.
\end{remark}

\subsection{The Pair $\Sigma_k^2 \times Q_{k-1}^1$: Unconditional Stability and Discrete Complexes} 

In this subsection, we show that the pair $\Sigma_k^2 \times Q_{k-1}^1$ is unconditionally stable with respect to both types of singularities. The main theorem is summarized as follows.
\begin{theorem}
\label{thm:c2}
The divergence operator $\div : \Sigma_k^2 \to Q_{k-1}^1$ is surjective for any mesh $\mathcal T$, provided that $k \ge 7$. Moreover, if $\mathcal T$ is shape regular, then for each $q  \in Q_{k-1}^1$, there exists $\sigma \in \Sigma_k^2$ such that $\div \sigma = q $ and $\|\sigma \|_{H^1} \lesssim \|q\|_{L^2}.$ 
\end{theorem}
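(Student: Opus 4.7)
The plan is to establish surjectivity of $\div$ by constructing a discrete smoothed stress complex
\begin{equation*}
\mathbf{P}_1 \hookrightarrow W_{k+2} \xrightarrow{\curl\curl^T} \Sigma_k^2 \xrightarrow{\div} Q_{k-1}^1 \to 0
\end{equation*}
and proving its exactness at $Q_{k-1}^1$. Here, $W_{k+2}$ would be a scalar finite element space of degree $k+2$ that is globally $C^2$-conforming with $C^4$ vertex continuity, mirroring the $H^3$-conforming construction referenced in the introduction. The global $C^2$ regularity ensures $\curl\curl^T W_{k+2} \subseteq C^0(\mathcal{T}; \mathbb{S}^2)$, and the extra $C^4$ continuity at vertices ensures the resulting matrices are $C^2$ at vertices, so indeed $\curl\curl^T W_{k+2} \subseteq \Sigma_k^2$. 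The requirement $k \ge 7$ corresponds to $k+2 \ge 9$, the minimal degree for unisolvence of $C^2$-conforming triangular elements.

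To prove exactness at $Q_{k-1}^1$, I would follow a bubble-complex-plus-gluing strategy. First, on each element $T$ I would verify exactness of a local polynomial complex built from bubble spaces (scalar bubbles vanishing on $\partial T$ with derivatives of order $\le 2$, matrix bubbles in $P_k(T;\mathbb{S}^2)$ vanishing on $\partial T$, and appropriate displacement bubbles), using a polynomial de Rham-type argument specialized to the elasticity complex on a single triangle. Second, I would glue these local complexes using the vertex and edge degrees of freedom. Given any $q_h \in Q_{k-1}^1$, one proceeds by first matching the $C^1$ vertex data of $q_h$ with vertex-supported stresses in $\Sigma_k^2$---here the enhanced $C^2$ vertex continuity of $\Sigma_k^2$ supplies more than enough freedom to accommodate any $C^1$ vertex data, uniformly over the mesh geometry---then matching the edge moments with edge-supported stresses, and finally invoking the local bubble complex on each element to match the remaining interior moments.

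For the $H^1$ stability estimate, a Fortin-type argument applies: given $q_h \in Q_{k-1}^1$, \Cref{inf-sup-cont.} provides a continuous lift $\tilde\sigma \in H^1(\Omega;\mathbb{S}^2)$ with $\div \tilde\sigma = q_h$ and $\|\tilde\sigma\|_{H^1} \lesssim \|q_h\|_{L^2}$, which one modifies by local bubble corrections (using the exactness from the gluing step) to enforce $\div \sigma_h = q_h$ without losing $H^1$ control, thanks to shape regularity and the locality of the correction. The main obstacle will be the second step above, namely verifying that the gluing requires no condition on the mesh; this is precisely where the $C^2$ vertex smoothness absorbs the rank deficiencies that would otherwise be induced by Type I and Type II singular vertices, which is the heart of the \emph{unconditional} claim and what distinguishes this result from the conditional stability of the $\Sigma_k^1$ and $\Sigma_k^0$ pairs treated in the subsequent subsections.
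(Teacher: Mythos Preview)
Your proposal is correct and follows essentially the same approach as the paper: the discrete complex $U_{k+2}^4 \to \Sigma_k^2 \to Q_{k-1}^1$, a local bubble complex on each element, and a Fortin-type argument starting from the continuous lift of \Cref{inf-sup-cont.}. The only organizational difference is that the paper first establishes global exactness by a pure dimension count (Euler formula), and then separately carries out the constructive vertex-matching/bubble-correction argument for the stability bound, whereas you propose to do both at once; the paper also makes the vertex-matching step fully explicit by writing down concrete values for $\sigma^{(1)}(v)$, $\nabla\sigma^{(1)}(v)$, $\nabla^2\sigma^{(1)}(v)$ that force $\div\sigma^{(1)}(v)=q_h(v)$ and $\nabla\div\sigma^{(1)}(v)=\nabla q_h(v)$, and compares $\sigma^{(1)}$ to the Scott--Zhang interpolant of the continuous lift to obtain the $H^1$ estimate.
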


\Cref{thm:c2} establishes that unconditional stability is achieved by imposing $C^2$ continuity at the vertices in the stress element space $\Sigma_k^2$ and $C^1$ continuity at the vertices in the displacement element space $Q_{k-1}^1$.

This theorem arises from a finite element discretization of the smoothed stress complex \eqref{eq:complex-sobolev}. To elucidate this, we recall the $H^3$ conforming finite element $U_{k+2}^4$ from \cite{Zenisek74,HuLinWu24}, defined for $k \ge 7$. The local shape function space of $U_{k+2}^4$ is the polynomial space $P_{k+2}$. For $u \in P_{k+2}$ on an element $T$, the degrees of freedom are given as follows:
\begin{enumerate}
	\item $\nabla^{\alpha} u(v)$ for each vertex $v$ with $|\alpha| \le 4$;
	\item $\int_e u \, p $ for $p \in P_{k-8}(e)$, $\int_e \frac{\partial u}{\partial n} p $ for $p \in P_{k-7}(e)$, and $\int_e \frac{\partial^2 u}{\partial n^2} p \, ds$ for $p \in P_{k-6}(e)$, for each edge $e$ of $T$;
	\item $\int_T u \, p $ for $p \in P_{k-7}(T)$.
\end{enumerate}
It can be readily checked that the finite element $U_{k+2}^4$ is unisolvent, and the resulting space is $H^3$ conforming.

The local shape function space of $\Sigma_k^2$ is $P_{k}(T;\mathbb S^2)$, and for $\sigma \in P_{k}(T;\mathbb S^2)$, the degrees of freedom are given as follows: 
\begin{enumerate}
    \item $\nabla^{\alpha} \sigma(v)$ for each vertex $v$ of $T$ and $|\alpha| \le 2$;
    \item $\int_e \sigma : p $ for $p \in P_{k-6}(e;\mathbb S^2)$ for each edge $e$ of $T$; 
    \item $\int_ T \sigma : p $ for $p \in P_{k-3}^{(0)}(T;\mathbb S^2).$ 
     Here $P_{k-3}^{(0)}(T) := \{ p \in P_{k-3}(T) : p(v) = 0,\,\, \forall v \in \mathcal V(T)\}.$
\end{enumerate}

Then, we can construct the following sequence: 
\begin{equation}
\label{eq:complex-fe}
\mathbf{P}_1~ {\hookrightarrow}~ U^4_{k+2}(\mathcal T) \xrightarrow{\curl\curl^T} \Sigma_{k}^2 (\mathcal T) \xrightarrow{\div} Q_{k-1}^1(\mathcal T) \to 0.
\end{equation}

The exactness of \eqref{eq:complex-fe} can be checked from direct calculation.
\begin{proposition}
\label{prop:complex-fe}
The finite element sequence \eqref{eq:complex-fe} forms a complex. When $\Omega$ is topologically trivial, this sequence is exact.
\end{proposition}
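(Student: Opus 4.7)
The plan is to verify that the sequence \eqref{eq:complex-fe} is a complex via direct pointwise computations, and then to establish exactness at each position by combining the continuous-level exactness of the smoothed Sobolev complex \eqref{eq:complex-sobolev} on the contractible domain $\Omega$ with a global Euler-characteristic dimension count.

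The complex property is straightforward: the identities $\div \circ \curl\curl^T \equiv 0$ and $\curl\curl^T(\mathbf{P}_1) = 0$ are pointwise, while the DOF structures supply the space inclusions. Namely, $u \in C^4(\mathcal V)$ implies $\nabla^2 u \in C^2(\mathcal V)$, so $\curl\curl^T u \in \Sigma^2_k$; and $\sigma \in C^2(\mathcal V)$ implies $\nabla\sigma \in C^1(\mathcal V)$, so $\div\sigma \in Q^1_{k-1}$. Polynomial degrees and global $H^3 \to H^1 \to L^2$ conformity follow immediately.

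For exactness at $U^4_{k+2}$, if $\curl\curl^T u = 0$ elementwise then $u|_T \in P_1(T)$ on every $T$, and global $C^0$ continuity (from $U^4_{k+2} \subset H^3 \hookrightarrow C^1$) forces the linear pieces to agree across each edge, so $u \in \mathbf{P}_1$. For exactness at $\Sigma^2_k$, given $\sigma \in \Sigma^2_k$ with $\div\sigma = 0$, I would invoke exactness of \eqref{eq:complex-sobolev} (a standard fact on contractible domains) to obtain $u \in H^3(\Omega)$ with $\curl\curl^T u = \sigma$, and then promote $u$ into $U^4_{k+2}$. Elementwise, $\nabla^2 u|_T$ is determined by the entries of $\sigma|_T \in P_k$, so $u|_T \in P_{k+2}$. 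The vertex derivatives $\nabla^\alpha u(v)$ for $2 \le |\alpha| \le 4$ are single-valued because they are recovered from $\nabla^{\alpha - 2}\sigma(v)$ and $\sigma \in C^2(\mathcal V)$, while $u(v)$ and $\nabla u(v)$ are single-valued from $H^3 \hookrightarrow C^1$; hence $u \in C^4(\mathcal V)$. The edge $C^2$ conformity is inherited from $u \in H^3$, so $u \in U^4_{k+2}$.

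Surjectivity of $\div : \Sigma^2_k \to Q^1_{k-1}$ then follows by dimension counting. Summing the DOFs vertex-, edge-, and face-wise, the alternating contributions collapse to $3$, $-3$, and $3$ respectively, giving
\begin{equation*}
\dim U^4_{k+2} - \dim \Sigma^2_k + \dim Q^1_{k-1} = 3(V - E + F) = 3 = \dim \mathbf{P}_1,
\end{equation*}
by Euler's identity $V - E + F = 1$ for a triangulated contractible planar domain. Combined with the kernel characterization at $U^4_{k+2}$ and the kernel-equals-image statement at $\Sigma^2_k$, this forces $\dim \mathrm{Range}(\div) = \dim Q^1_{k-1}$. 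The delicate step is the regularity transfer from $\sigma$ back to its scalar potential $u$: verifying that the $H^3$ potential provided by the continuous complex automatically has the piecewise $P_{k+2}$ structure and the $C^4$ vertex continuity demanded by $U^4_{k+2}$. The other ingredients --- the pointwise complex identities, the $\mathbf{P}_1$ kernel characterization, and the Euler count --- are either immediate or routine combinatorics.
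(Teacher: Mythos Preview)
Your proposal is correct and follows essentially the same approach as the paper: establish exactness at $\Sigma^2_k$ by lifting a divergence-free $\sigma$ through the continuous complex \eqref{eq:complex-sobolev} to an $H^3$ potential and then arguing it lies in $U^4_{k+2}$, and obtain surjectivity of $\div$ from the Euler dimension count. Your treatment of the regularity transfer (recovering $C^4$ vertex continuity of $u$ from $\sigma\in C^2(\mathcal V)$ together with $H^3\hookrightarrow C^1$) is in fact more explicit than the paper's, which states only that $\psi$ and $\nabla\psi$ are continuous at vertices and leaves the higher-order vertex continuity implicit.
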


\begin{proof}
For $\sigma \in \Sigma_k^2(\mathcal T) \subseteq H^1(\Omega;\mathbb S^2)$ with $\div \sigma = 0$, there exists $\psi \in H^3$ such that $\psi$ is piecewise polynomial. Since $\psi \in H^3$, we know that $\psi$ and $\nabla \psi$ are continuous at vertices. Therefore, $\psi \in U_{k+2}^4(\mathcal T)$. To establish exactness, it suffices to verify the dimension count for the sequence \eqref{eq:complex-fe}. By direct calculation, we obtain the following dimensions:
\begin{align*}
\dim U_{k+2}^4(\mathcal{T}) &= \frac{1}{2}(k-6)(k-5) |\mathcal{T}| + (3k-18) |\mathcal{E}| + 15 |\mathcal{V}|, \\
\dim \Sigma_k^2(\mathcal{T}) &= \left( \frac{3}{2}(k+2)(k+1) - 9(k+1) \right) |\mathcal{T}| + (3k-15) |\mathcal{E}| + 18 |\mathcal{V}|, \\
\dim Q_{k-1}^1(\mathcal{T}) &= (k(k+1) - 18) |\mathcal{T}| + 6 |\mathcal{V}|.
\end{align*}
Since
\[
\dim U_{k+2}^4(\mathcal{T}) - \dim \Sigma_k^2(\mathcal{T}) + \dim Q_{k-1}^1(\mathcal{T}) = 3 (|\mathcal{T}| - |\mathcal{E}| + |\mathcal{V}|) = 3,
\]
where the final equality holds for a topologically trivial domain $\Omega$ by the Euler equation, it then follows that the finite element sequence is exact.
\end{proof}

To establish the stability results, we begin by proving the following results, which provides the exactness of the bubble stress complex. We define the local bubble spaces $\mathring{U}_{k+2}(T)$, $\mathring{\Sigma}_k(T)$, and $\mathring{Q}_{k-1}(T)$ on each element $T \in \mathcal{T}$ as follows respectively:
\begin{align*}
\mathring{U}_{k+2}(T) &:= \left\{ u \in P_{k+2}(T) : \nabla^{\alpha} u(v) = 0, \, \forall |\alpha| \le 4, \, u|_{\partial T} = \frac{\partial u}{\partial n}\big|_{\partial T} = \frac{\partial^2 u}{\partial n^2}\big|_{\partial T} = 0 \right\}, \\
\mathring{\Sigma}_k(T) &:= \left\{ \sigma \in P_k(T; \mathbb{S}^2) : \nabla^{\alpha} \sigma(v) = 0, \, \forall |\alpha| \le 2, \, \sigma|_{\partial T} = 0 \right\}, \\
\mathring{Q}_{k-1}(T) &:= \left\{ q \in P_{k-1}(T; \mathbb{R}^2) : \nabla^{\alpha} q(v) = 0, \, \forall |\alpha| \le 1 \right\}.
\end{align*}

\begin{proposition}
\label{prop:bubble-complex}
The following sequence on each element $T \in \mathcal{T}$:
\begin{equation}
\label{eq:bubble-complex}
0 \to \mathring{U}_{k+2}(T) \xrightarrow{\curl \curl^T} \mathring{\Sigma}_k(T) \xrightarrow{\div} \mathring{Q}_{k-1}(T) \cap \mathbf{RM}^{\perp} \to 0,
\end{equation}
forms a complex and is exact. {Here $\cap \mathbf{RM}^{\perp}$ means that the $\mathbf{RM}$ moment is zero.}
\end{proposition}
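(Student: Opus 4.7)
The plan is to establish exactness at all three positions by combining an Airy potential argument at the middle spot with a dimension count at the right end. First I would verify well-definedness of both maps. For $\curl\curl^T: \mathring{U}_{k+2}(T) \to \mathring{\Sigma}_k(T)$, the interior polynomial degree is correct, the 4th-order vanishing of $u$ at vertices forces $\nabla^\alpha\sigma(v)=0$ for $|\alpha|\le 2$, and the boundary vanishing $u|_{\partial T}=\partial_n u|_{\partial T}=\partial_n^2 u|_{\partial T}=0$ forces $\sigma|_{\partial T}=0$ via the standard identities $n^T\sigma n=\partial_{tt}u$, $t^T\sigma t=\partial_{nn}u$, $n^T\sigma t=-\partial_{nt}u$ on each edge. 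For $\div:\mathring{\Sigma}_k(T)\to \mathring{Q}_{k-1}(T)\cap \mathbf{RM}^\perp$, the vertex conditions on $q=\div\sigma$ follow from $\nabla^\alpha\sigma(v)=0$ for $|\alpha|\le 2$, while the $\mathbf{RM}^\perp$ condition follows from integration by parts $\int_T(\div\sigma)\cdot r = -\int_T \sigma:\nabla r + \int_{\partial T}(\sigma n)\cdot r$: the boundary term vanishes since $\sigma|_{\partial T}=0$, and the volume term vanishes since $\sigma$ is symmetric and $\nabla r$ is skew (or zero) for $r\in\mathbf{RM}$. The complex property $\div\circ\curl\curl^T=0$ is automatic at the continuous level, and injectivity of $\curl\curl^T$ follows from its continuous kernel being $\mathbf{P}_1$, which intersects $\mathring{U}_{k+2}(T)$ trivially by the vertex vanishing.

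For exactness at $\mathring{\Sigma}_k(T)$, given $\sigma$ with $\div\sigma=0$, simply-connectedness of $T$ yields a polynomial Airy potential $u\in P_{k+2}(T)$ with $\sigma=\curl\curl^T u$, unique modulo $\mathbf{P}_1$. Unpacking the data on $\sigma$ with the identities above gives $\nabla^\beta u(v)=0$ for $2\le|\beta|\le 4$, together with $u|_e$ affine, $\partial_n u|_e$ constant, and $\partial_n^2 u|_e=0$ on every edge $e$. I would then normalize $u$ by subtracting the unique affine polynomial fixing $u(v_1)=0$ and $\nabla u(v_1)=0$. On each of the two edges $e$ meeting $v_1$, write $u|_e = \alpha + \beta s$ and $\partial_n u|_e \equiv \gamma$: evaluating at $v_1$ gives $\alpha=0$ and $\beta t+\gamma n = \nabla u(v_1)=0$, so $\beta=\gamma=0$; thus $u$ and $\partial_n u$ vanish on these two edges, forcing in particular $u(v_2)=u(v_3)=0$ and $\nabla u(v_2)=\nabla u(v_3)=0$. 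Applied at $v_2$ to the third edge, the same argument gives $u\equiv 0$, $\partial_n u\equiv 0$ there as well. Combined with $\partial_n^2 u|_e=0$ and $\nabla^\beta u(v)=0$ for $|\beta|\le 4$, this places $u$ in $\mathring{U}_{k+2}(T)$, proving exactness at the middle.

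For exactness at $\mathring{Q}_{k-1}(T)\cap\mathbf{RM}^\perp$, I would use dimension counting. Direct computations give $\dim\mathring{U}_{k+2}(T)=\binom{k-5}{2}$, $\dim\mathring{\Sigma}_k(T)=\tfrac{3(k-4)(k+1)}{2}$, and $\dim(\mathring{Q}_{k-1}(T)\cap\mathbf{RM}^\perp)=k(k+1)-21$, and one checks $\dim\mathring{U}_{k+2}-\dim\mathring{\Sigma}_k+\dim(\mathring{Q}_{k-1}\cap\mathbf{RM}^\perp)=0$. The injectivity of $\curl\curl^T$ together with exactness at the middle then gives $\dim\div(\mathring{\Sigma}_k)=\dim\mathring{\Sigma}_k-\dim\mathring{U}_{k+2}=k(k+1)-21$, which matches the codomain, so the containment $\div(\mathring{\Sigma}_k)\subseteq \mathring{Q}_{k-1}\cap\mathbf{RM}^\perp$ is an equality.

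The main obstacle I expect is the edge-propagation step in the middle-exactness argument: we must upgrade the relatively weak boundary information encoded in $\sigma|_{\partial T}=0$ (three scalar conditions per edge) to the stronger $H^3$-bubble data of $u$ (three traces per edge) after having spent only three parameters of the $\mathbf{P}_1$ gauge at a single vertex. The reason this works is a precise dimensional accounting: $\sigma|_e=0$ encodes exactly one affine function on $e$ and one constant normal derivative, which are precisely the freedoms that a globally affine correction kills once one vertex value and gradient have been normalized to zero. Verifying the dimension formulas above (using $P_{k-7}$ interior moments for $U_{k+2}^4$ and the vertex-vanishing reduction for $\mathring{\Sigma}_k$ via the factorization by $\lambda_1\lambda_2\lambda_3$) is routine but slightly delicate and will be written out carefully.
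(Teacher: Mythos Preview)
Your proposal is correct and follows essentially the same approach as the paper, namely a dimension count, but you spell out carefully the well-definedness of the maps and the middle-exactness via the Airy potential normalization that the paper leaves implicit under ``it is straightforward to see \eqref{eq:bubble-complex} is a complex'' and ``it suffices to verify the dimension count.'' Your dimension formulas $\binom{k-5}{2}$, $\tfrac{3}{2}(k-4)(k+1)$, $k(k+1)-21$ agree with the paper's (after simplification), and your edge-propagation argument for placing the normalized Airy potential in $\mathring{U}_{k+2}(T)$ is exactly the missing ingredient that turns the alternating-sum identity into genuine exactness.
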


\begin{proof}
It is straightforward to see \eqref{eq:bubble-complex} is a complex. For exactness, it suffices to verify the dimension count. We compute the dimensions of the local bubble spaces as follows:
\begin{align*}
\dim \mathring{U}_{k+2}(T) &= \dim P_{k-7}(T) = \frac{1}{2}(k-6)(k-5), \\
\dim \mathring{\Sigma}_k(T) &= 3 \dim P_k(T) - 9(k-5) - 54 = \frac{3}{2}(k+2)(k+1) - 9(k-5) - 54, \\
\dim \mathring{Q}_{k-1}(T) &= 2 \dim P_{k-1}(T) - 12 = k(k+1) - 18.
\end{align*}
It follows that
\[
\dim \mathring{U}_{k+2}(T) - \dim \mathring{\Sigma}_k(T) + \dim \mathring{Q}_{k-1}(T) = 3 = \dim \mathbf {RM}.
\]
Therefore, we conclude the exactness.
\end{proof}

By the surjectivity result and scaling argument, we have the bubble stability.
 
\begin{corollary}[Bubble Stability]
\label{cor:bubble-stable}
Suppose that $k \ge 7$. For any $q \in P_{k-1}(T; \mathbb{R}^2)$ satisfying:
\begin{enumerate}
	\item $q(v) = \nabla q(v) = 0$ for all vertices $v \in \mathcal{V}(T)$, and
	\item $\int_T q \cdot p \, dx = 0$ for all $p \in \mathbf{RM}$,
\end{enumerate}
there exists $\sigma \in \mathring{\Sigma}_k(T)$ such that $\div \sigma = q$ and
$
\|\sigma\|_{H^1} \lesssim \|q\|_{L^2},
$ where the hidden constant depends only on the shape regularity constant of $T$.
\end{corollary}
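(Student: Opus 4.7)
The plan is to couple the exactness supplied by the bubble complex in \Cref{prop:bubble-complex} with a finite-dimensional bounded-inverse argument on a reference triangle, and then transport the resulting bound to a general shape-regular element $T$ via the affine Piola transform for symmetric tensors.

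First, I would verify that the two hypotheses on $q$ place it precisely in the space $\mathring{Q}_{k-1}(T)\cap \mathbf{RM}^{\perp}$ appearing in \Cref{prop:bubble-complex}: the vertex-vanishing conditions give $q\in\mathring Q_{k-1}(T)$, and the second hypothesis is the rigid-body orthogonality forced by the exactness. Consequently \Cref{prop:bubble-complex} guarantees some $\sigma\in\mathring\Sigma_k(T)$ with $\div\sigma=q$. To single out a uniformly bounded choice, I would take the unique $\sigma$ orthogonal (say in $L^2$) to $\ker(\div|_{\mathring\Sigma_k(T)})$ among all solutions; this defines a linear right inverse $\mathcal S_T:\mathring Q_{k-1}(T)\cap\mathbf{RM}^{\perp}\to\mathring\Sigma_k(T)$.

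Second, on a fixed reference triangle $\hat T$, the operator $\mathcal S_{\hat T}$ lives between two finite-dimensional normed spaces and is therefore automatically bounded: there exists a constant $C_{\hat T}$ depending only on $\hat T$ and $k$ such that $\|\mathcal S_{\hat T}\hat q\|_{H^1(\hat T)}\le C_{\hat T}\|\hat q\|_{L^2(\hat T)}$. I would transport this estimate to a general element $T$ using the affine map $F:\hat T\to T$, $F(\hat x)=B\hat x+b$, together with the Piola pullbacks
\[
\hat\sigma(\hat x) = J\,B^{-1}\sigma(F(\hat x))B^{-T},\qquad \hat q(\hat x) = J\,B^{-1}q(F(\hat x)),\qquad J=|\det B|.
\]
These transforms are designed so that $\widehat{\div}\hat\sigma = \hat q$ holds iff $\div\sigma=q$, and because the vertex, interior, and boundary vanishing conditions defining $\mathring\Sigma_k$ and $\mathring Q_{k-1}$ are affinely invariant, they restrict to bijections between the corresponding local bubble spaces on $T$ and on $\hat T$. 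Applying the reference-element bound to $\hat q$, then unwinding via the standard scalings $\|v\|_{L^2(T)}\sim h_T\|\hat v\|_{L^2(\hat T)}$, $|v|_{H^1(T)}\sim|\hat v|_{H^1(\hat T)}$, combined with the explicit Piola factors involving $B,B^{-1},J$, will produce the target inequality with a constant depending only on the shape-regularity constant.

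The main obstacle I anticipate is careful bookkeeping at the last step: one must check that the many factors of $\|B\|$, $\|B^{-1}\|$, and $J$ introduced by the symmetric-tensor Piola cancel exactly, so that the final constant is genuinely independent of $h_T$ beyond shape regularity. This cancellation is precisely what the Piola transform is built to provide, but tracking it cleanly for symmetric matrix fields requires some attention, and one must also verify that the pullback of $\mathbf{RM}(T)^{\perp}$ coincides with $\mathbf{RM}(\hat T)^{\perp}$ (which follows from the integration-by-parts characterization of the image of $\div$ on the bubble space). Beyond this, no deeper conceptual work is needed: \Cref{prop:bubble-complex} already supplies the surjectivity, and the rest is the standard reference-element scaling machinery.
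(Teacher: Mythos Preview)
Your proposal is correct and follows the same route as the paper, which proves the corollary in a single sentence: ``By the surjectivity result and scaling argument, we have the bubble stability.'' You have simply spelled out what that scaling argument is---the finite-dimensional bounded inverse on a reference element transported by the symmetric Piola transform---and correctly identified the only points requiring care (the bookkeeping of $\|B\|$, $\|B^{-1}\|$, $J$ factors and the compatibility of $\mathbf{RM}^{\perp}$ under pullback), all of which go through as you describe.
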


Now the proof of \Cref{thm:c2} follows a standard argument \cite{FalkNeilan13,HuZhang14}, with the help of \Cref{cor:bubble-stable}. 
\begin{proof}[Proof of \Cref{thm:c2}]

Given any $q_h\in Q_{k-1}^1 \subset L^2(\Omega;\mathbb R^2)$, it suffices to show that there exists $\tau\in H^1(\Omega;\mathbb S^2)$ such that $\mathrm{div}\tau=q_h.$ The proof will be divided into two steps. 

In the first step, we select $\sigma^{(1)} \in \Sigma_k^2$ such that

\begin{enumerate}
    \item For all vertices $v$, $\sigma_{i,j}^{(1)}(v) = 0,  \partial_{x} \sigma_{i,j}^{(1)}(v)  = \partial_{x} \sigma_{i,j}^{(1)}(v) = \frac{1}{2}q_h(v),
$ and 
$ 
\partial_{xx} \sigma_{ij}^{(1)}(v)  =  \partial_{x} q_h(v), \partial_{yy} \sigma_{i,j}^{(1)}(v) =\partial_{y} q_h(v), \partial_{xy} \sigma_{i,j}^{(1)}(v) = 0.
$
\item For all $e\in\mathcal{E}$ and $p \in P_{k-6}(T; \mathbb S^2)$,  
$
\int_e\sigma^{(1)} n_e\cdot p=\int_e\tau_h n_e\cdot p.
$
Here $n_e$ is the unit normal vector of edge $e$. 
\item 
For all $T\in\mathcal{T}$, for all $p\in P_{k-3}^0(T;\mathbb{S}^2)$, 
$
\int_T \sigma^{(1)} : p= \int_T \tau_h : p.
$
\end{enumerate}

It can be checked that at each vertex $v \in \mathcal V$, we have 
$
\mathrm{div}\sigma^{(1)}(v)=q_h(v),~\nabla\mathrm{div}\sigma^{(1)}(v) = \nabla q_h(v).
$
Moreover, for $k\geq 7$, it holds that
$$
\int_T\mathrm{div}\sigma^{(1)}\cdot p=\int_{\partial T}\sigma^{(1)}n_{\partial T} \cdot p=\int_{\partial T}\tau n_{\partial T}\cdot p=\int_T\mathrm{div}\tau\cdot p=\int_T q_h\cdot p,~p\in \mathbf{RM}.
$$
This comes from the fact that $p \cdot n_{e}$ is linear function for all $p \in \mathbf{RM}.$

In the second step, we notice that in each $T$, it holds that $q_h - \div \sigma^{(1)} \in \mathring{Q}_{k-1}(T) \cap \mathbf{RM}^{\perp}.$ 
By \Cref{cor:bubble-stable}, there exist $\sigma^{(2)}\in \Sigma,~\sigma^{(2)}|_T\in \mathring{\Sigma}_{k}(T) $, such that
$$
\mathrm{div}\sigma^{(2)} = q_h-\mathrm{div}\sigma^{(1)},\text{ and }\|\sigma^{2}\|_{H^1}\lesssim \| q_h-\div \sigma^{(1)}\|_{ L^2}. 
$$
Let $\sigma=\sigma^{(1)}+\sigma^{(2)}$ we finish the construction.

To complete the proof, it suffices to estimate $\|\sigma\|_{H^1}$. Since $\|\sigma\|_{H^1} \le \|\sigma^{(1)} \|_{H^1} + \|\sigma^{(2)}\|_{H^1} \lesssim \|\sigma^{(1)} \|_{H^1} + \|q_h\|_{L^2}$, it then suffices to estimate $\|\sigma^{(1)}\|_{H^1}.$

% Now we give an estimate on $\sigma^{(1)}$.
Let $\tau_h \in \Sigma_k$ be the Scott-Zhang interpolant of $\tau$.
Then $(\sigma^{(1)}-\tau_h)|_T\in \mathcal{P}_k(T;\mathbb{S}^2)$. For $\Sigma_k^2$, let $\varphi_{v,0,i}$, $\varphi_{v,1,i}$, $\varphi_{v,2,i}$ be the nodal basis functions associated with function  value $(\cdot)(v) $ , gradient value $\nabla (\cdot)(v) $ and Hessian value $\nabla^2 (\cdot)(v) $, respectively. $\varphi_{e,j}$ is the nodal basis function corresponding to degrees of freedom $\int_e (\cdot) p_j$ and $\varphi_{T,j}$ for $\int_T(\cdot)p_j.$  
Expend $(\sigma^{(1)}-\tau_h)$ by the set of basis function of $\Sigma(T)$, we then have
\begin{align*}
    \|\sigma^{(1)}-\tau_h\|_{H^1(T)}&=\|\sum_{i=0}^{2}\sum_{v\in\mathcal{V}(T)}\nabla^i(\sigma^{(1)}-\tau_h)(v)\varphi_{v,i}+\sum_j\sum_{e\in\mathcal{E}(T)}\int_{e}(\sigma^{(1)}-\tau_h)p_j \varphi_{e,j} \\
    &+\sum_j\int_T(\sigma^{(1)}-\tau_h)p_j\varphi_{T,j}  \|_{H^1(T)}.
\end{align*}
By scaling argument, 
\begin{align*} 
&\|\sum_j \sum_{v\in\mathcal{V}(T)}\nabla^i(\sigma^{(1)}-\tau_h):p_j (v)\varphi_{v,i,j}\|^2_{H^1(T)}\lesssim \sum_{a\in \mathcal{V}(T)}(h_T^i|\nabla^i\tau_h(a)|^2+h_T^2|q_h(a)|^2+h_T^4|\nabla q_h(a)|^2);\\
&\|\sum_j\sum_{e\in\mathcal{E}(T)}\int_{e}(\sigma^{(1)}-\tau_h)q_j \varphi_{e,j}\|_{H^1(T)}=\|\sum_j\sum_{e\in\mathcal{E}(T)}\int_{e}(\tau-\tau_h)q_j \varphi_{e,j}\|_{H^1(T)}\lesssim h_T^{-1/2}\|\tau-\tau_h\|_{L^2(\partial T)};\\
&\|\sum_j\int_T(\sigma^{(1)}-\tau_h)q_j \varphi_{T,j}  \|_{H^1(T)}
\lesssim \|\sum_j\int_T(\tau-\tau_h)q_j\varphi_{T,j}  \|_{H^1(T)}
\lesssim h_T^{-1}\|\tau-\tau_h\|_{L^2(T)}.
\end{align*} 

By applying a scaling argument and the inverse inequality, we obtain
\[
\|\sigma^{(1)} - \tau_h\|_{H^1({T})} \lesssim \|\tau\|_{H^1(\widetilde{T})} + \|q_h\|_{L^2(\widetilde{T})},
\]
where $\tilde{T}$ denotes the union of all elements adjacent to $T$. Consequently, by shape regularity we have 
\[
\|\sigma^{(1)}\|_{H^1} \lesssim \|\tau_h\|_{H^1} + \|\tau\|_{H^1} + \|q_h\|_{L^2} \lesssim \|q_h\|_{L^2}.
\]

\end{proof}

\subsection{The Pair $\Sigma_k^1 \times Q_{k-1}^0$: Matrix-version Falk-Neilan pair} 

In this subsection, we analyze the results for the pair $\Sigma_k^1 \times Q_{k-1}^0$, provided that $k \geq 7$. First, we see how the rank deficiency arises for singular vertices. Recall that for each Type I singular vertex $z$, we denote the faces in the patch surrounding $z$ by $T_1, T_2, T_3, T_4$ in counterclockwise order. 
% Within this patch, let $t_i$ represent the unit tangential vector along the edge $\overline{zy_i}$, as illustrated in Figure~\ref{fig:Type I}.

% \begin{figure}[htbp]
% 		\begin{tikzpicture}[scale=4]
% 			\draw[thick] (1/2,1/2) -- (1/2 + 1/4,1);
% \draw[thick] (1/2,1/2) -- (1,1/2);
% \draw[thick] (1/2,1/2) -- (1/2 - 1/4,0);
% \draw[thick] (1/2,1/2) -- (0,1/2);
% \draw[thick] (1/2 + 1/4,1) -- (1,1/2);
% \draw[thick] (1/2 + 1/4,1) -- (0,1/2);
% \draw[thick] (1/2 - 1/4,0) -- (0,1/2);
% \draw[thick] (1/2 - 1/4,0) -- (1,1/2);
% 		\node [right=] at (1/2+1/8,1/2+1/8) {$T_1$};
% 			\node [right=] at (1/2-1/8,1/2+1/8) {$T_2$};
% 				\node [right=] at (1/2-1/4,1/2-1/8) {$T_3$};
% 				\node [right=] at (1/2+1/16,1/2-1/8) {$T_4$};
% 				\fill (1/2,1/2) circle (0.5pt);
%                 \node[below=] at (1/2,1/2) {$z$};
%                 \node[right=] at (1,1/2) {$y_1$};
%                 \node[above=] at (1/2+1/4,1) {$y_2$};
%                 \node[left=] at (0,1/2) {$y_3$};
%                 \node[below=] at (1/2-1/4,0) {$y_4$};
% 		\end{tikzpicture}
% \caption{A configuration for type I singularity.}
% \label{fig:Type I}
% 		\end{figure}

Now we fix one of its surrounding elements $T$. Recall the conventional notations in \Cref{fig:star-z}, $n_+$ represents the outer normal vector, while $n_-$  represents the inner normal vector. Note that See \Cref{fig:triangle} for an illustration. We then define \begin{equation} \label{eq:Jzt} J_{z,T} = t_{+}n_{-}  ^T + t_{-} n_{+} ^T.\end{equation}

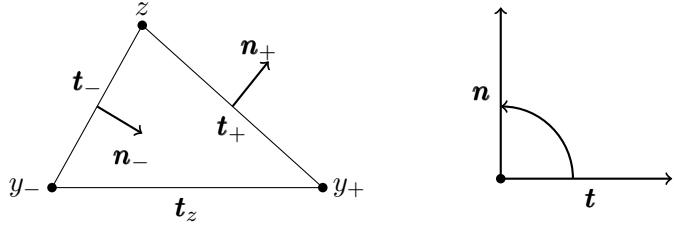
\begin{figure}[htbp]
\centering
	\begin{minipage}{0.4\textwidth}
	\begin{tikzpicture}[scale=1.2]
		\draw (0,0)--(1,1.8);
\node [above] at (0.4,0.9) {$\pmb t_{-}$};
\draw  (1,1.8)--(3,0);
\draw   (0,0)--(3,0);
\node [below] at (1.5,0) {$\pmb t_{z}$};
\node [below] at (2,0.9) {$\pmb t_{+}$};
\fill (0,0) circle (1.5pt); 
\fill (1,1.8) circle (1.5pt); 
\fill (3,0) circle (1.5pt); 
\node [above] at (1,1.8) {$z$};
\node [left] at (0,0) {$y_-$};
\node [right=] at (3,0) {$y_+$};
\draw[thick,->] (0.5,0.9) -- (1.0,0.6);
\node[left] at (1.2,0.3) {$\pmb n_{-}$};
\draw[thick,->] (2,0.9) -- (2.4,1.4);
\node[above] at (2.3,1.3) {$\pmb n_{+}$};
	\end{tikzpicture}
\end{minipage}
	\begin{minipage}{0.4\textwidth}
	\begin{tikzpicture}[scale=1.2]
    % Draw the two vectors
\draw[->,thick] (-0.5,0) -- (1.4,0); % Horizontal vector
\draw[->,thick] (-0.5,0) -- (-0.5,1.9); % Angled vector
\node [below] at (0.5,0) {$\pmb t$};
\node [left] at (-0.5,0.95) {$\pmb n$};
% Draw the angle arc
\draw[thick,->] (0.3,0) arc[start angle=0, end angle=90, radius=0.8];
%\draw[thick,->] (0.5,0.9) to[out=90, in=180] (1.4,0.4);
% Add a small dot at the origin
\fill (-0.5,0) circle (1.5pt);
	\end{tikzpicture}
\end{minipage}
	\caption{Notations in an element.}
	\label{fig:triangle}
\end{figure}

\begin{lemma}[Necessary condition for Type I singularity]
\label{lem:necessary-type-I}
Let $z$ be a Type I singular vertex, with four elements $T_1, T_2, T_3, T_4$ sharing $z$, as shown in Figure~\ref{fig:Type I}. Then for any continuous, piecewise $C^2$, symmetric matrix-valued function $\tau \in C^0(\st  (z) ;\mathbb S^2)$, it holds that
\begin{equation}\label{eq:type I}
J_{z,T_1} : \nabla \div \tau |_{T_1}(z) + J_{z,T_2} : \nabla \div \tau |_{T_2}(z) + J_{z,T_3} : \nabla \div \tau |_{T_3}(z) + J_{z,T_4} : \nabla \div \tau |_{T_4}(z) = 0.
\end{equation}
\end{lemma}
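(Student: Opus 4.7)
The strategy is a direct verification, reduced via rotation to a convenient coordinate system in which the two singular lines at $z$ align with the coordinate axes. Since both $J_{z,T_s}$ and $\nabla \div \tau$ are genuine rank-$2$ tensors and the operation $(\cdot)^\perp$ commutes with rotations, the scalar $\sum_s J_{z,T_s} : \nabla \div \tau|_{T_s}(z)$ is coordinate-invariant. Accordingly, I would first translate $z$ to the origin and rotate so that the two distinct lines carrying the four edges of $\st(z)$ become the $x$- and $y$-axes. The four tangents then read $t_1 = e_1,\ t_2 = e_2,\ t_3 = -e_1,\ t_4 = -e_2$ with $n_s = t_s^\perp$, and each $T_s$ occupies one of the four quadrants.

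Next, I would substitute the convention $(t_-, t_+) = (t_{s-1}, t_s)$ and the corresponding normals into $J_{z,T_s} = t_+ n_-^T + t_- n_+^T$ and carry out a short rank-one computation. The four matrices come out diagonal:
\[
J_{z,T_1} = J_{z,T_3} = \operatorname{diag}(-1, 1), \qquad J_{z,T_2} = J_{z,T_4} = \operatorname{diag}(1,-1).
\]
The key algebraic simplification is now visible: with these diagonal $J$'s the contraction collapses to the difference of diagonal entries of $\nabla \div \tau$. Writing $(\nabla \div \tau)_{11} = \partial_1^2 \tau_{11} + \partial_1 \partial_2 \tau_{12}$ and $(\nabla \div \tau)_{22} = \partial_1 \partial_2 \tau_{12} + \partial_2^2 \tau_{22}$, the mixed derivative cancels in the difference, so
\[
J_{z,T_s} : \nabla \div \tau|_{T_s}(z) = \varepsilon_s \left[ \partial_1^2 \tau_{11} - \partial_2^2 \tau_{22} \right]_{T_s}(z),
\]
with $\varepsilon_1 = \varepsilon_3 = -1$ and $\varepsilon_2 = \varepsilon_4 = +1$. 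Only pure tangential second derivatives along the two singular lines remain.

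The final step applies edge continuity. Since $\tau$ is piecewise polynomial and globally $C^0$, every derivative of $\tau$ taken in the tangent direction of an edge is continuous across that edge. Two edges of $\st(z)$ lie on the $x$-axis, shared by $\{T_1, T_2\}$ and $\{T_3, T_4\}$ respectively, so $\partial_1^2 \tau_{11}|_{T_1}(0) = \partial_1^2 \tau_{11}|_{T_2}(0)$ and $\partial_1^2 \tau_{11}|_{T_3}(0) = \partial_1^2 \tau_{11}|_{T_4}(0)$. The other two edges lie on the $y$-axis, yielding the analogous equalities for $\partial_2^2 \tau_{22}$ between $\{T_2, T_3\}$ and between $\{T_4, T_1\}$. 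Substituting these identities into $\sum_s \varepsilon_s [\partial_1^2 \tau_{11} - \partial_2^2 \tau_{22}]|_{T_s}(0)$ and regrouping pairwise shows that all four contributions cancel.

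No step is genuinely hard; the main care required is orientation bookkeeping, i.e.\ keeping the counterclockwise ordering of the $T_s$, the assignment of $(t_-, t_+)$ inside each element, and the convention $(a,b)^\perp = (b,-a)$ consistent throughout. Once these are aligned, the identity follows by direct substitution and the whole argument is a short coordinate computation.
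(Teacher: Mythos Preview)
Your argument has a genuine gap at the very first step. You claim to rotate so that the two lines carrying the four edges of $\st(z)$ become the $x$- and $y$-axes. A rotation preserves angles, so this is only possible when the two lines at $z$ are already perpendicular. The definition of a Type~I singular vertex imposes no such restriction: the two lines may meet at any angle $\theta$, and the paper's own figure for this lemma depicts a non-perpendicular configuration. Your invariance remark is correct --- the scalar $\sum_s J_{z,T_s}:\nabla\div\tau|_{T_s}(z)$ is invariant under rigid motions --- but that is precisely why it does not let you alter $\theta$. Nor can you fix this by passing to a shear or general linear map, since neither $(\cdot)^\perp$ nor the Frobenius contraction $J:\nabla\div\tau$ behaves tensorially under non-orthogonal changes of coordinates.

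The paper handles the general angle by rotating only one line onto the $x$-axis, then introducing skew coordinates $(\tilde x,\tilde y)$ aligned with the two edge directions. The contraction $J_{z,T_s}:\nabla\div\tau|_{T_s}$ is still computed in the Cartesian frame, but the partial derivatives are rewritten via the chain rule in terms of $\partial_{\tilde x}$, $\partial_{\tilde y}$. After eliminating the mixed derivatives one finds $J_{z,T_1}:\nabla\div\tau|_{T_1}=\sin\theta\,\partial_{\tilde x}^2\tau_{11}-\sin\theta\cos^2\theta\,\partial_{\tilde y}^2\tau_{11}-2\cos\theta\sin^2\theta\,\partial_{\tilde y}^2\tau_{12}-\sin^3\theta\,\partial_{\tilde y}^2\tau_{22}$, a genuinely $\theta$-dependent expression that degenerates to yours only when $\theta=\pi/2$. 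The cancellation over the four elements then follows from the edge continuity of the tangential second derivatives $\partial_{\tilde x}^2\tau$ and $\partial_{\tilde y}^2\tau$, exactly as in your final step. In short, your strategy is right in spirit but you must carry the angle $\theta$ through the computation rather than assume it away.
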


\begin{figure}[htbp]
		\begin{tikzpicture}[scale=4]
			\draw[thick] (1/2,1/2) -- (1/2 + 1/4,1);
\draw[thick] (1/2,1/2) -- (1,1/2);
\draw[thick] (1/2,1/2) -- (1/2 - 1/4,0);
\draw[thick] (1/2,1/2) -- (0,1/2);
\draw[thick] (1/2 + 1/4,1) -- (1,1/2);
\draw[thick] (1/2 + 1/4,1) -- (0,1/2);
\draw[thick] (1/2 - 1/4,0) -- (0,1/2);
\draw[thick] (1/2 - 1/4,0) -- (1,1/2);
		\node [right=] at (1/2+1/8,1/2+1/8) {$T_1$};
			\node [right=] at (1/2-1/8,1/2+1/8) {$T_2$};
				\node [right=] at (1/2-1/4,1/2-1/8) {$T_3$};
				\node [right=] at (1/2+1/16,1/2-1/8) {$T_4$};
				\fill (1/2,1/2) circle (0.5pt);
                \node[below=] at (1/2,1/2) {$z$};
                \node[right=] at (1,1/2) {$y_1$};
                \node[above=] at (1/2+1/4,1) {$y_2$};
                \node[left=] at (0,1/2) {$y_3$};
                \node[below=] at (1/2-1/4,0) {$y_4$};
		\end{tikzpicture}
\caption{A configuration for type I singularity.}
\label{fig:Type I}
		\end{figure}
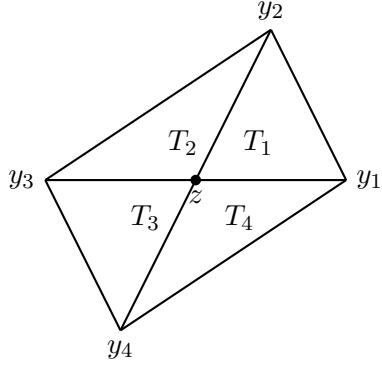
\begin{proof}
Without loss of generality, we rotate the edge $\overline{zy_1}$ to align with the $x$-axis, and set $z$ as the origin. In this case, $\alpha = 0$ and we define $\theta$ as the angle between $\overline{zy_2}$ and the $x$-axis. Consider the transformation:
\[
\begin{pmatrix}
\tilde{x} \\
\tilde{y}
\end{pmatrix}
=
\begin{pmatrix}
1 & 0 \\
\cos\theta & \sin\theta
\end{pmatrix}
\begin{pmatrix}
x \\
y
\end{pmatrix}.
\]
By direct computation within a single element, we obtain the following results when evaluating at $z$:
\begin{align*}
\partial_x(\mathrm{div}\tau)_1 &= \partial^2_{\tilde{x}}\tau_{11} + 2\cos\theta\partial^2_{\tilde{x}\tilde{y}}\tau_{11} + \sin\theta\partial^2_{\tilde{x}\tilde{y}}\tau_{12} + \cos^2\theta\partial^2_{\tilde{y}}\tau_{11} + \cos\theta\sin\theta\partial^2_{\tilde{y}}\tau_{12}, \\ 
\partial_y(\mathrm{div}\tau)_1 &= \sin\theta\partial^2_{\tilde{x}\tilde{y}}\tau_{11} + \sin\theta\cos\theta\partial^2_{\tilde{y}}\tau_{11} + \sin^2\theta\partial^2_{\tilde{y}}\tau_{12}, \\
\partial_x(\mathrm{div}\tau)_2 &= \partial^2_{\tilde{x}}\tau_{12} + 2\cos\theta\partial^2_{\tilde{x}\tilde{y}}\tau_{12} + \sin\theta\partial^2_{\tilde{x}\tilde{y}}\tau_{22} + \cos^2\theta\partial^2_{\tilde{y}}\tau_{12} + \sin\theta\cos\theta\partial^2_{\tilde{y}}\tau_{22}, \\
\partial_y(\mathrm{div}\tau)_2 &= \sin\theta\partial^2_{\tilde{x}\tilde{y}}\tau_{12} + \sin\theta\cos\theta\partial^2_{\tilde{y}}\tau_{12} + \sin^2\theta\partial^2_{\tilde{y}}\tau_{22}.
\end{align*}
Eliminating the mixed derivatives, we find the following identity holds at $z$:
\begin{align*} 
J_{z,T_1} &: \nabla\mathrm{div}\tau|_{T_1}= \sin\theta\partial^2_{\tilde{x}}\tau_{11} - \sin\theta\cos^2\theta\partial^2_{\tilde{y}}\tau_{11} - 2\cos\theta\sin^2\theta\partial^2_{\tilde{y}}\tau_{12} - \sin^3\theta\partial^2_{\tilde{y}}\tau_{22}.
\end{align*}
Since $\tau$ is continuous, it follows that $\partial^2_{\tilde{x}}\tau$ is continuous on $\overline{zy_1}, \overline{zy_3}$ and $\partial^2_{\tilde{y}}\tau$ is continuous on $\overline{zy_2}, \overline{zy_4}$. Therefore, for the values at $z$ we have 
$$J_{z,T_1} : \nabla\mathrm{div}\tau|_{T_1} + J_{z,T_2} : \nabla\mathrm{div}\tau|_{T_2} + J_{z,T_3} : \nabla\mathrm{div}\tau|_{T_3} + J_{z,T_4} : \nabla\mathrm{div}\tau|_{T_4}  = 0.$$
\end{proof}

\begin{comment} 
\begin{figure}[htbp]
		\begin{tikzpicture}[scale=1.5]
		\draw (0,0)--(1.5,1.8);
		\draw  (1.5,1.8)--(3,0);
		\draw   (0,0)--(3,0);
\draw   (1.5,-1)--(1.5,0);
\draw   (1.5,0)--(1.5,1.8);
	\draw   (1.5,-1)--(3,0);
    \draw   (1.5,-1)--(0,0);

		\fill (0,0) circle (1.5pt); 
		\fill (1.5,1.8) circle (1.5pt); 
		\fill (3,0) circle (1.5pt); 
          \fill (1.5,0) circle (1.5pt);
\fill (1.5,-1) circle (1.5pt);
		\node [right=] at (3,0) {$y_1$};
        \node [right=] at (1.5,-0.2) {$z$};
        \node [right=] at (1.7,-0.5) {$T_1$};
        \node [right=] at (1.7,0.5) {$T_{2}$};
        \node [right=] at (0.8,0.5) {$T_{3}$};
        \node [right=] at (0.8,-0.5) {$T_{4}$};
            \node [below=] at (2.3,0) {$t_{1}$};
              \node [left=] at (1.8,0.8) {$t_{2}$};
                \node [below=] at (1.5,-1) {$y_{4}$};
                  \node [left=] at (1.6,-0.3) {$t_{4}$};
                  \node [above=] at (1.5,1.8) {$y_{2}$};
                  \node [left=] at (0,0) {$y_{3}$};
                  \node [above=] at (0.8,0) {$t_{3}$};
	\end{tikzpicture}
	
\end{figure}
\end{comment}

\begin{remark}
For the singular case, it holds that
$J_z := J_{z,1} = - J_{z,2} = J_{z,3} = - J_{z,4}.$ Therefore, \eqref{eq:type I} can be rewritten as an alternating sum: 
\[  J_z : \sum_{s = 1}^4 (-1)^s \nabla \div \tau(z)|_{T_s} = 0.\]
Suppose that $\tau \in \Sigma_k^0$. If we set $q = \div \tau$, then we have
\begin{equation}\label{eq:hm-cond}J_{z} :\nabla q_{1}(z)+ J_{z} :\nabla  q_{3}(z) = J_{z}: \nabla q_{2}(z)+ J_{z}: \nabla  q_{4}(z).\end{equation} Here, $q_s = q|_{T_s}.$
\end{remark}
% Similar conditions appeared in the study of the Scott-Vogelius element, see \cite{ScottVogelius85,GuzmanScott19} for more details.
%
%\begin{proof}
%It suffices to check the Lagrange basis $\psi_z^{k-2}\psi_{y_s}\psi_{y_{s+1}}\sigma $ and  $\psi_z^{k-2}\psi_{y_s}^2 \sigma$ for some $\sigma \in \mathbb S$. The first one is checked later in \Cref{lemma:NzT}, which gives that $J_z \nabla q_s(v) = 0$.  
%For the second term, one can check that $\nabla q_{s}(v) = \nabla q_{s+1}(v)$ by direct calculation. Therefore, we conclude the result.
%\end{proof}

Now we prove the sufficiency. We need to show that for all multi-valued vector-valued functions $q$ such that \eqref{eq:hm-cond} holds, there exists $\sigma \in \Sigma_k^0(z)$ such that $\div \sigma(z) = q(z)$. The challenge {arises} here from the nontrivial null space of the mapping $\tau \mapsto J_z : \tau$ in \eqref{eq:hm-cond}. To resolve this, we deal with two different {types} of $\tau$ separately: the first type contributes nothing to both terms in \eqref{eq:hm-cond}, and the second type has nonzero but equal contributions to both sides.

Define the null space $
\mathcal{N}_{z,T} := \{ \tau \in \mathbb{R}^{2 \times 2} : J_{z,T} : \tau = 0 \}. $ Now we fix an element $T$ and recall the notation in \Cref{fig:triangle}.  Let
\begin{equation}
\label{eq:varphi_inT}
\varphi_1 = \psi_z^2 \psi_+ \psi_- t_- t_-^T, \quad \varphi_2 = \psi_z^2 \psi_+ \psi_- t_+ t_+^T, \quad \varphi_3 = \psi_z^2 \psi_+ \psi_- (t_- t_+^T + t_+ t_-^T).
\end{equation}
Clearly, all $\varphi_i$ for $i = 1,2,3$ are supported in $T$, with $\nabla \varphi_i(v) = 0$ for all $v \in \mathcal{V}$ and $\nabla^2 \varphi_i(v) = 0$ for $v \neq z$. Consequently, $\varphi_i \in \Sigma_4^1(\st (z))$, and vanishes on the boundary. We now evaluate $\nabla \div \varphi_i(z)$ to see they form a basis of the null space.

\begin{lemma}
\label{lemma:NzT}
Fix $z, T$ and let $\varphi_i$ be defined as \eqref{eq:varphi_inT}. Then, $\nabla \div \varphi_i(z), i = 1,2,3$ form a basis of $\mathcal N_{z,T}= \{ \tau \in \mathbb R^{2\times 2} : J_{z,T} : \tau = 0 \}.$
\end{lemma}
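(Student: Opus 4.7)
The plan is to combine a dimension count with a direct calculation of $\nabla\div\varphi_i(z)$. For the dimension count, observe that $J_{z,T}=t_+n_-^T+t_-n_+^T$ is a nonzero matrix whenever $T$ is non-degenerate, so the linear form $\tau\mapsto J_{z,T}\!:\!\tau$ has rank one on $\mathbb{R}^{2\times 2}$ and hence $\dim\mathcal N_{z,T}=3$. It therefore suffices to show that the three matrices $\nabla\div\varphi_i(z)$ all lie in $\mathcal N_{z,T}$ and are linearly independent.

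For the computation, each $\varphi_i$ has the form $u\,M_i$, where $u=\psi_z^2\psi_+\psi_-$ is a scalar function and $M_i$ is one of the constant symmetric matrices $t_-t_-^T$, $t_+t_+^T$, or $t_-t_+^T+t_+t_-^T$. Thus $\div(uM_i)=M_i\nabla u$ and $\nabla\div\varphi_i=M_i\,\nabla^2 u$. To evaluate $\nabla^2u(z)$ I would apply the product rule: since $\psi_\pm(z)=0$ and $\psi_z(z)=1$, every second-derivative term that leaves one of $\psi_+$ or $\psi_-$ undifferentiated vanishes at $z$, so the only surviving contribution is
\[
\nabla^2 u(z)=\nabla\psi_+\,(\nabla\psi_-)^T+\nabla\psi_-\,(\nabla\psi_+)^T.
\]

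To verify $\nabla\div\varphi_i(z)\in\mathcal N_{z,T}$, I would use two standard facts about barycentric gradients at $z$: the orthogonality relations $t_-\cdot\nabla\psi_+=0$ and $t_+\cdot\nabla\psi_-=0$ (because $\psi_+$ vanishes along $\overline{zy_-}$ and $\psi_-$ vanishes along $\overline{zy_+}$), together with the normalizations $t_\pm\cdot\nabla\psi_\pm=1/|\overline{zy_\pm}|$. Feeding these into the Frobenius pairing through the identity $(ab^T)\!:\!(cd^T)=(a\cdot c)(b\cdot d)$ reduces $J_{z,T}\!:\!M_i\nabla^2 u(z)$ to a combination that vanishes by the same orthogonalities. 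For linear independence, I would pass to the concrete frame with $t_-$ along the $x$-axis and $t_+=(\cos\theta,\sin\theta)$ for some $\theta\in(0,\pi)$, and observe that the three resulting matrices point in algebraically independent directions, with the degenerate configurations ruled out by shape regularity.

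The main obstacle I anticipate is the sign and orientation bookkeeping in relating the unit normals $n_\pm=t_\pm^\perp$ appearing in $J_{z,T}$ to the gradients $\nabla\psi_\pm$, which are also edge-normals at $z$ but may differ by a sign depending on the orientation of $T$. Once these signs are fixed consistently with the perpendicular convention and with \Cref{fig:triangle}, the identity $J_{z,T}\!:\!\nabla\div\varphi_i(z)=0$ becomes a routine algebraic check, and the lemma follows.
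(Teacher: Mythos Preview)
Your strategy matches the paper's: both compute $\nabla\div\varphi_i(z)$ explicitly using the fact that only the cross term $\nabla\psi_+\otimes\nabla\psi_-+\nabla\psi_-\otimes\nabla\psi_+$ survives in $\nabla^2 u(z)$, then check membership in $\mathcal N_{z,T}$ and linear independence. The paper simply records the three matrices
\[
\frac{\sin\theta}{h_+h_-}\,n_-t_-^T,\qquad -\frac{\sin\theta}{h_+h_-}\,n_+t_+^T,\qquad -\frac{\sin\theta}{h_+h_-}\,(n_-t_+^T-n_+t_-^T),
\]
obtained via $\nabla\psi_+=-n_-/h_+$ and $\nabla\psi_-=n_+/h_-$, and notes that the membership check is direct (each Frobenius pairing with $J_{z,T}=t_+n_-^T+t_-n_+^T$ contains a factor $t_\pm\cdot n_\pm=0$).

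One concrete point to fix: your formula $\nabla\div\varphi_i=M_i\,\nabla^2u$ uses the Jacobian convention $(\nabla v)_{ij}=\partial_j v_i$, whereas the paper's convention yields the transpose, $\nabla^2u\,M_i$. This is not cosmetic here. With your formula one gets for $i=1$ the matrix $t_-n_-^T$ (up to a scalar) rather than $n_-t_-^T$, and
\[
J_{z,T}:t_-n_-^T=(t_+\cdot t_-)(n_-\cdot n_-)+(t_-\cdot t_-)(n_+\cdot n_-)=2\cos\theta,
\]
which does \emph{not} vanish, so the verification you sketch (``reduces to a combination that vanishes by the same orthogonalities'') would fail as written. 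This is exactly the orientation bookkeeping you flag at the end; once you adopt the paper's convention (equivalently, write $\nabla\div\varphi_i(z)=\nabla^2u(z)\,M_i$), each pairing picks up a factor $n_\pm\cdot t_\pm=0$ and the argument goes through immediately.
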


\begin{proof}
Note that $\nabla \psi_{-} = \frac{1}{h_{-}} n_+, \nabla \psi_{+} = - \frac{1}{h_+} n_-$, and $t_+ \cdot n_- = - t_- \cdot n_+ = \sin \theta$, where $\theta$ is the angle between $t_+$ and $t_-$. 
By direct calculation, it holds that

\begin{equation}
\begin{aligned}
\nabla \div \varphi_1(z) & =  \frac{\sin\theta}{h_+h_-} n_-t_-^T, \\ 
\nabla \div \varphi_2(z) & = 	-\frac{\sin\theta}{h_+h_-} n_+ t_+^T ,\\ 
\nabla \div \varphi_3(z) & = 	- \frac{\sin\theta}{h_+h_-} (n_- t_+^T - n_+ t_-^T). \\ 
\end{aligned}
\end{equation}
Therefore, it can be checked that $\nabla \div \varphi_i$ form a basis.
%\begin{align*} 
%&\nabla\mathrm{div}\varphi_{1}|_{T}(z)= (t_{-}^T\nabla \lambda_{y-}
%|_T)\nabla \lambda_{y+}|_Tt^T_{-}=\frac{\sin{\theta}}{h_T^{y-}}\nabla \lambda_{y+}|_Tt^T_{-}=-\frac{\sin{\theta}}{h_T^{y-}h_T^{y+}}n_{T}^{y+}t_{-}^T,\\
%&\nabla\mathrm{div}\varphi_{2}|_{T}(z)= (t_{+}^T\nabla \lambda_{y+}|_T)\nabla \lambda_{y-}|_Tt^T_{+}=\frac{\sin{\theta}}{h_T^{y+}}\nabla \lambda_{y-}|_Tt^T_{+}=-\frac{\sin{\theta}}{h_T^{y+}h_T^{y-}}n_{T}^{y-}t_{+}^T,\\
%&\nabla\mathrm{div}\varphi_{3}|_{T}(z)= (t_{+}^T\nabla \lambda_{y+}|_T)\nabla \lambda_{y-}|_Tt^T_{s-}+(t_{-}^T\nabla \lambda_{y-}|_T)\nabla \lambda_{y+}
%|_Tt^T_{+}\\
%&=  \frac{\sin{\theta}}{h_T^{y+}} \nabla \lambda_{y-}|_Tt^T_{-} - \frac{\sin{\theta}}{h_T^{y-}} \nabla \lambda_{y+}
%|_Tt^T_{+}=-\frac{\sin{\theta}}{h_T^{y-}h_T^{y+}}(n_T^{y+}t_{+}^T - n_T^{y-}t_{-}^T).
%\end{align*}

%Since 
%$$n_{T}^{y+}t_{-}^T, n_{T}^{y-}t_{+}^T, (n_T^{y+}t_{+}^T - n_T^{y-}t_{-}^T)$$ 
%is a basis of the traceless matrix $\mathbb T$, we conclude the result.
\end{proof}

%\begin{lemma}
%For any element $T$, and one vertex $z$, there exists $\sigma \in \Sigma_4^1$, such that the following conditions hold:
%\begin{enumerate}
%	\item $\sigma$ is supported in $T$.
%	\item $\div \sigma (v') = \nabla \div\sigma(v') = 0$ for $v' \neq v$.
%	\item $ \nabla \div \sigma(z) = \dev \nabla q(z)$ in $K$, here $\dev$ is the traceless part of a matrix.
%\end{enumerate}	
%%Moreover, we have $\|\sigma\|_{H^1} \lesssim \|q\|_{L^2}.$
%\end{lemma}
%\begin{proof}
%	
%
%\end{proof}

% For a nonsingular vertex $v \in \mathcal V \setminus \mathcal V_{I}$, we say  $T_s \in \st{v}$ is nonsingular if $\sin(\theta_s + \theta_{s + 1}) \neq 0$. Clearly, a vertex is nonsingular if and only if at least one element $T_s \in \st v$ is nonsingular. The following lemma first shows that one can modify the value $\nabla \div \sigma$ for nonsingular element. 

%\begin{lemma}[Gradient value modification for nonsingular element]
%For $z \in \mathcal V \setminus \mathcal V_I$, and $T_s$ is one of the nonsingular element with respect to $z$. For any $q \in \Sigma_{k-1}^{-1}$, there exists $\sigma \in \Sigma_{k}^{1}$, such that the following condition holds:
%\begin{enumerate}
%\item $\sigma$ is supported in $\st v$. 
%\item 	$\mathrm{div}\sigma(v)=0,\text{ for all } v\in \mathcal{V}$,
%\item	$\nabla\mathrm{div}\sigma(v)=0,~\text{for all } v\neq z$.
%\item	$\nabla\mathrm{div}\sigma|_T(z)=\nabla q|_T(z)$ for all $T \in \st v$.
%\item $\int_{T} \div \sigma p = \int_T qp$ for all $T \in \st v$, $p \in \mathbf{RM}$.
%\end{enumerate}
%\end{lemma}

 We now proceed with the proof through a case-by-case analysis, starting with the nonsingular case. The proof strategy is similar to that of \cite{GuzmanScott19}. Suppose that $z\in \mathcal{V} \setminus \mathcal{V}_I$ is a nonsingular vertex.  For any multi-valued vector-valued function $q \in Q_{k-1}^0$, we now find $\sigma \in \Sigma_k^1$ such that $\div \sigma(z) = q(z)$. Furthermore, we obtain bounded estimates whenever $\Theta_I(x)$ is bounded below (no nearly singular vertices). The following lemma provides the details.

\begin{lemma}[Gradient Value Modification for the Nonsingular Case]
\label{lem:hm-nonsingular}
For $z \in \mathcal{V} \setminus \mathcal{V}_I$ and any $p \in Q_{k-1}^0$, there exists $\sigma \in \Sigma_k^1$ satisfying:
\begin{enumerate}
\item $\sigma$ is supported in $\st(z)$.
\item $\mathrm{div} \sigma(v) = 0$ for all $v \in \mathcal{V}$.
\item $\nabla \mathrm{div} \sigma(v) = 0$ for all $v \neq z$.
\item $\nabla \mathrm{div} \sigma|_T(z) = \nabla q|_T(z)$ for all $T \in \st(z)$.
\end{enumerate}
Moreover, the bound $\displaystyle \|\sigma\|_{H^1} \lesssim \frac{1}{\Theta_I(z)^2} \|q\|_{L^2(\st(z))}$ holds.
\end{lemma}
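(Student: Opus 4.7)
I construct $\sigma$ explicitly on the patch $\st(z)$ as a sum of two families of bubble functions. The first family consists of the element-supported bubbles $\varphi_i^{(s)}$ from Lemma~\ref{lemma:NzT}, supported in a single $T_s$, whose divergence gradients at $z$ span the three-dimensional subspace $\mathcal{N}_{z,T_s}\subset \mathbb{R}^{2\times 2}$. The second family consists of edge-supported bubbles $\Phi_s$ attached to each interior edge $\overline{zy_s}$ and supported on $T_s\cup T_{s+1}$, built from products of the form $\psi_z^2\psi_{y_s}^2$ multiplied by a suitable symmetric matrix (with $k\ge 7$, additional higher-order variants of this product may be included to enrich the image). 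By construction the $\Phi_s$ lie in $\Sigma_k^1$, have vanishing function value and first derivative at every vertex, glue continuously across $\overline{zy_s}$, vanish on $\partial\st(z)$, and have Hessians at $z$ of the rank-one form $v_sv_s^T$ in $T_s$ and $w_sw_s^T$ in $T_{s+1}$, where $v_s$ and $w_s$ are the gradients of the barycentric coordinate $\psi_{y_s}$ in each element, aligned with the normals $n_{s-1}$ and $n_{s+1}$ respectively. Unlike the element bubbles, these edge bubbles produce nontrivial contributions in the one-dimensional complement of $\mathcal{N}_{z,T}$.

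I then take the ansatz $\sigma = \sum_{s,i}\alpha_{s,i}\varphi_i^{(s)} + \sum_s \Phi_s$ and set up the linear system coming from the $4m$ scalar constraints $\nabla\div\sigma|_{T_s}(z) = \nabla q|_{T_s}(z)$. Projecting the constraint in element $T_s$ onto the direction $J_{z,T_s}$ annihilates every element-bubble contribution (which lies in $\mathcal{N}_{z,T_s} = \ker(J_{z,T_s}:\cdot)$) and produces a cyclic linear system involving only the edge-bubble coefficients, in which the equation indexed by $T_s$ couples $\Phi_{s-1}$ and $\Phi_s$ through trigonometric factors determined by the angles at $z$. Once the reduced system is solved, the in-kernel residuals in each element are absorbed element-by-element by the $\alpha_{s,i}$ using the basis provided by Lemma~\ref{lemma:NzT}.

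The crux of the argument is to show that the reduced edge-bubble system is invertible precisely when $z\notin\mathcal{V}_I$, with a quantitative inverse bound of order $\Theta_I(z)^{-2}$. A Type I singularity corresponds exactly to the geometric configuration in which the necessary condition $\sum_s J_{z,T_s}:\nabla\div\tau|_{T_s}(z) = 0$ of Lemma~\ref{lem:necessary-type-I} becomes an actual obstruction, and the determinant of the reduced cyclic system vanishes. The quantity $\Theta_I(z) = \max_i|\sin(\theta_i+\theta_{i+1})|$ measures the distance from this degeneracy, and the exponent $2$ in $\Theta_I(z)^{-2}$ reflects the rank-one Hessian structure of the edge bubbles, which introduces two geometric factors. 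The main obstacle is carrying out this invertibility analysis sharply enough to extract the stated power of $\Theta_I(z)$; once the coefficients are controlled, a standard scaling argument on the shape-regular patch together with inverse inequalities (converting pointwise values of $\nabla q|_{T_s}(z)$ into $L^2$ bounds on $q$) yields the claimed $H^1$ estimate $\|\sigma\|_{H^1}\lesssim \Theta_I(z)^{-2}\|q\|_{L^2(\st(z))}$.
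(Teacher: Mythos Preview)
Your strategic decomposition—handling the component in the null space $\mathcal{N}_{z,T}$ via the element bubbles of Lemma~\ref{lemma:NzT}, and the out-of-kernel $J_{z,T}$-component via edge-supported bubbles—is exactly the paper's plan. The difference lies in how the out-of-kernel part is treated, and here your proposal has a genuine gap.

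You propose to assemble a cyclic $m\times m$ linear system in the edge-bubble coefficients and assert that it is invertible with inverse bounded by $\Theta_I(z)^{-2}$. You correctly flag this as ``the main obstacle,'' but you neither fix the symmetric matrix $M_s$ in $\Phi_s$ nor verify that the resulting cyclic bidiagonal matrix has the claimed conditioning. With a generic choice of $M_s$, each $\Phi_s$ contributes nontrivially in both $T_s$ and $T_{s+1}$ (as you yourself describe), so the system is genuinely cyclic. The determinant of a cyclic bidiagonal matrix has the form $\prod_s a_s \pm \prod_s b_s$ and depends on \emph{all} the angles at once; it is not at all clear that it is bounded below by a power of $\Theta_I(z)=\max_i|\sin(\theta_i+\theta_{i+1})|$ rather than by a product over every nearly-collinear edge pair. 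Your heuristic that the determinant vanishes exactly at Type~I does not, by itself, yield a quantitative inverse bound.

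The paper sidesteps the cyclic invertibility analysis altogether by a sequential chaining argument. It first locates one index $s$ achieving $|\sin(\theta_s+\theta_{s+1})|=\Theta_I(z)$ and there uses the \emph{one-sided} bubble $\sigma_s^+=\psi_z^3\psi_{y_s}^2\,t_{s+1}t_{s+1}^T$. The matrix choice $t_{s+1}t_{s+1}^T$ is the key trick: in $T_{s+1}$ one has $\nabla\psi_{y_s}\parallel n_{s+1}\perp t_{s+1}$, so $\nabla\div\sigma_s^+|_{T_{s+1}}(z)=0$; the bubble contributes only in $T_s$, and there $J_{z,s}:\nabla\div\sigma_s^+(z)=\frac{\sin\theta_s}{h_s^2}\sin^2(\theta_s+\theta_{s+1})$, which upon inversion gives exactly the factor $\Theta_I(z)^{-2}$. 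For every other element $T_{s'}$ the paper uses two-sided bubbles $\sigma_r=\psi_z^3\psi_{y_r}^2\,t_rt_r^T$, whose $J$-contributions in $T_r$ and $T_{r+1}$ are $\mp\sin^3\theta_r/h_r^2$ and $\sin^3\theta_{r+1}/h_{r+1}^2$, bounded above and below by shape regularity alone. Starting from $T_{s'}$ one fixes the prescribed $J$-value there with $\tilde\sigma_{s'}$, pushes the spillover to $T_{s'+1}$, cancels that with $\tilde\sigma_{s'+1}$, and so on until the chain reaches $T_s$, where $\sigma_s^+$ absorbs the final residual with no further spillover. Each link costs $O(1)$ by shape regularity; the single factor $\Theta_I(z)^{-2}$ enters only at the terminal step. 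This is effectively a triangular solve rather than a cyclic one, and delivers the stated bound with no matrix-inversion analysis at all.
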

\begin{proof}
The construction proceeds in several steps, with the following strategy. We first construct $\tau$ such that $\nabla \div \tau - \nabla q$ lies in the null space $\mathcal{N}_{z,T}$ for each $T \in \st(z)$. Then, we apply \Cref{lemma:NzT} to adjust the values lying in the null space. For brevity, we denote $J_{z,s} = J_{z,T_s}$ and $\mathcal{N}_{z,s} = \mathcal{N}_{z,T_s}$, where 
$
J_{z,s} = t_s n_{s-1}^T + t_{s-1} n_s^T.
$ Recall that this agrees with \eqref{eq:Jzt}. 

\textbf{Step 1:} We first consider $T_s$ with $\sin(\theta_s + \theta_{s-1}) = \Theta_I(z) > 0$ and show how to modify the gradient value in $T_s$. 
For such $s$, we define $\sigma_s^+ = \psi_z^3 \psi_s^2 t_{s+1} t_{s+1}^T$. This $\sigma_s^+$ satisfies conditions (1), (2), and (3) of \Cref{lem:hm-nonsingular}. For condition (4), we compute the value separately:
\begin{itemize}
\item In $T_{s+1}$, we have
\[
\nabla \div \sigma_s^+|_{T_{s+1}}(z) = (t_{s+1} \cdot \nabla \lambda_{y_s}) \nabla \lambda_{y_s} t_{s+1}^T = 0.
\]
\item In $T_s$, we have
\[
\nabla \div \sigma_s^+|_{T_s}(z) = (\nabla \lambda_{y_s} \cdot t_{s+1}) \nabla \lambda_{y_s} t_{s+1}^T = -\frac{\sin(\theta_s + \theta_{s+1})}{h_s^2} n_{s-1} t_{s+1}^T.
\]
\end{itemize}
Thus,
\begin{align*}
J_{z,s} : \nabla \div \sigma_s^+|_{T_s}(z) &= (t_s n_{s-1}^T + t_{s-1} n_s^T) : \left( -\frac{\sin(\theta_s + \theta_{s+1})}{h_s^2} n_{s-1} t_{s+1}^T \right) \\
&= \frac{\sin \theta_s}{h_s^2} \sin^2(\theta_s + \theta_{s+1}).
\end{align*}
Since $\sin(\theta_s + \theta_{s-1}) \neq 0$, we define
\[
\tau_s = \frac{h_s^2 (J_{z,s} : \nabla q(z))}{\sin \theta_s \sin^2(\theta_s + \theta_{s+1})} \sigma_s^+,
\]
ensuring $J_{z,s} : \nabla \div \tau_s(z) = J_{z,s} : \nabla q(z)$ in $T_s$. Moreover,
\[
\|\tau_s\|_{H^1} \lesssim \frac{h_s^2 |\nabla q(z)|}{\Theta_I(z)^2} \lesssim \frac{1}{\Theta_I(z)^2} \|q\|_{L^2(\st(z))}.
\]
Here the last inequality comes from the scaling argument.

\textbf{Step 2:} We now adjust the value for other $T_{s'}$, $s' \neq s$ to make sure the modified gradient is in the null space.
Without loss of generality, assume $s' = 1$. For each $r$, define $\sigma_r = \psi_z^3 \psi_r^2 t_r t_r^T$. In $T_r$,
\[
\nabla \div \sigma_r|_{T_r}(z) = (t_r \cdot \nabla \lambda_r) \nabla \lambda_r t_r^T = -\frac{\sin \theta_r}{h_r^2} n_{r-1} t_r^T,
\]
thus it holds that
\[
J_{z,r} : \nabla \div \sigma_r|_{T_r}(z) = (t_r n_{r-1}^T + t_{r-1} n_r^T) : \left( -\frac{\sin \theta_r}{h_r^2} n_{r-1} t_r^T \right) = -\frac{\sin^3 \theta_r}{h_r^2}.
\]
In $T_{r+1}$,
\[
\nabla \div \sigma_r|_{T_{r+1}}(z) = \frac{\sin \theta_r}{h_r^2} n_{r+1} t_r^T,
\]
and
\[
J_{z,r+1} : \nabla \div \sigma_r|_{T_{r+1}}(z) = \frac{\sin^3 \theta_{r+1}}{h_{r+1}^2}.
\]
% Define $A_r = \frac{\sin^3 \theta_r}{h_r^2}$ and $j_r = J_{z,r} : \nabla q(z)$. \lt{why we need to define this.} 
Let $\tilde{\sigma}_r = \frac{h_r^2}{\sin^3 \theta_r} \sigma_r$. Then we have
\[
\|\tilde{\sigma}_r\|_{H^1} \lesssim h_r^2, \quad |J_{z,r+1} : \nabla \div \tilde{\sigma}_r|_{T_{r+1}}(z)| \lesssim 1.
\]
Define
\[
\tilde{\tau}_s = \alpha_{s-1} \frac{h_s^2 (J_{z,s} : \nabla q(z))}{\sin \theta_s \sin^2(\theta_s + \theta_{s+1})} \sigma_s^+,
\]
where $\sigma_s^+$ is defined in Step 1. 
Now let
\[
\tau_1 = \alpha_1 \tilde{\sigma}_1 + \cdots + \alpha_{s-1} \tilde{\sigma}_{s-1} + \tilde{\tau}_s,
\]
where $\alpha_1 = J_{z,1} : \nabla q(z)$, and $\alpha_r = \alpha_{r-1} (J_{z,r} : \nabla \div \tilde{\sigma}_{r-1}|_{T_r}(z))$ for $r = 2, \ldots, s-1$, sequentially. 
Thus, $|\alpha_r| \lesssim |\alpha_1| \lesssim |\nabla q(z)|$, and
\[
\|\alpha_r \tilde{\sigma}_r\|_{H^1} \lesssim |\nabla q(z)| h_r^2 \lesssim \|q\|_{L^2(\st(z))}, \quad \|\tilde{\tau}_s\|_{H^1} \lesssim \frac{1}{\Theta_I(z)^2} \|q\|_{L^2(\st(z))}.
\]
This $\tau_1$ satisfies $J_{z,1} : \nabla \div \tau_1|_{T_1}(z) = J_{z,1} : \nabla q(z)$, while $J_{z,s} : \nabla \div \tau_1|_{T_s}(z) = 0$ for $s \neq 1$. Similarly, we can construct $\tau_{s'}$ for other $s' \neq 1$.

\textbf{Step 3:} After the previous two steps, there exists $\tau \in \Sigma_0^k$ such that $\nabla \div \tau - \nabla q$ to lie in $\mathcal{N}_{z,T}$ for all $T \in \st(z)$. Using \Cref{lem:hm-nonsingular}, we conclude the proof.
\end{proof}

\begin{remark}
The above argument assumes that $z$ is an interior vertex. For the case that $z$ is a boundary vertex, we can choose any boundary element $T_s$ in Step 1 and then continue the remaining proof. 	
\end{remark}

Now we focus on the singular case. The idea behind the proof is similar to the non-singular case. 

\begin{lemma}[Gradient Value Modification for the Singular Case]
\label{lem:hm-singular}
For $z \in \mathcal V_I$, and any $q \in Q_{k-1}^0$ such that 
\eqref{eq:hm-cond} holds. 
There exists $\sigma \in \Sigma_{k}^{1}$, such that the following condition holds:
\begin{enumerate}
\item $\sigma$ is supported in $\st(z)$. 
\item 	$\mathrm{div}\sigma(v)=0,\text{ for all } v\in \mathcal{V}$,
\item	$\nabla\mathrm{div}\sigma(v)=0,~\text{for all } v\neq z$.
\item	$\nabla\mathrm{div}\sigma|_T(z)=\nabla q|_T(z)$ for all $T \in \st(z)$.
\end{enumerate}
Moreover, it holds that $\|\sigma\|_{H^1} \lesssim \|q\|_{L^2(\st(z))}.$
\end{lemma}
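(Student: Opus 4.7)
The plan is to follow the three-step structure of the nonsingular lemma (gradient value matching, bidiagonal elimination, null-space correction) while using the compatibility condition \eqref{eq:hm-cond} to bypass the rescaling by $1/\sin^2(\theta_s+\theta_{s+1})$ that blew up Step~1 of the nonsingular proof. Since that rescaling is singular precisely for $z\in\mathcal V_I$, I would discard the function $\sigma_s^+=\psi_z^3\psi_s^2\,t_{s+1}t_{s+1}^T$ entirely; the bridging functions $\sigma_r=\psi_z^3\psi_r^2\,t_r t_r^T$ already used in Step~2 of the nonsingular proof suffice, because their $J$-moments scale as $\sin^3\theta_r/h_r^2$, a quantity uniformly bounded below by shape regularity.

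Concretely, I would set up the $4\times 3$ linear system
\begin{equation*}
\sum_{r=1}^{3}\alpha_r\,J_{z,T_s}:\nabla\div\sigma_r|_{T_s}(z)=J_{z,T_s}:\nabla q|_{T_s}(z)=:c_s,\qquad s=1,2,3,4.
\end{equation*}
Since $\sigma_r$ is supported in $T_r\cup T_{r+1}$, the matrix $M$ is lower bidiagonal: only $M_{r,r}$ and $M_{r+1,r}$ are nonzero, and both are of order $\sin^3\theta/h^2$. The $3\times 3$ subsystem indexed by $s=1,2,3$ is therefore triangular and well-conditioned, and can be solved sequentially for $\alpha_1,\alpha_2,\alpha_3$ with the a priori bound $|\alpha_r|\lesssim|\nabla q(z)|$ uniformly in $\mathcal T$.

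The critical verification---and in my view the main obstacle---is that the fourth equation $M_{4,3}\alpha_3=c_4$ is then automatic. Using the sign pattern $J_{z,T_1}=-J_{z,T_2}=J_{z,T_3}=-J_{z,T_4}=J_z$ that holds on a singular fan, the compatibility \eqref{eq:hm-cond} rewrites as $c_1+c_2+c_3+c_4=0$; after plugging the triangular solution into the $T_4$ row, the residual telescopes to exactly this alternating sum and vanishes. I expect most of the labour to lie in the sign bookkeeping that converts between the projections $J_{z,T_s}$ on different elements of the fan and the single vector $J_z$, and I would isolate this in a short preparatory computation before invoking it.

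Once such a $\tau=\sum_r\alpha_r\sigma_r$ is in hand, the residual $(\nabla q-\nabla\div\tau)|_{T_s}(z)$ lies in $\mathcal N_{z,T_s}$ for every $s$, so \Cref{lemma:NzT} supplies an elementwise correction $\tau'\in\Sigma_k^1$ supported in $\st(z)$, with vanishing vertex data away from $z$, that eliminates this null-space residual without disturbing conditions~(1)--(3). Setting $\sigma=\tau+\tau'$ and combining $|\alpha_r|\lesssim|\nabla q(z)|$ with the inverse estimate $|\nabla q(z)|\lesssim h^{-1}\|q\|_{L^2(\st(z))}$ and $\|\sigma_r\|_{H^1}\lesssim 1$ yields the uniform bound $\|\sigma\|_{H^1}\lesssim\|q\|_{L^2(\st(z))}$, with no $\Theta_I(z)^{-2}$ penalty. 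The absence of that factor is the payoff of working inside the kernel of the compatibility map, which is exactly what \eqref{eq:hm-cond} guarantees.
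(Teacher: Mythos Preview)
Your proposal is correct and matches the paper's approach: the paper's one-line proof simply points back to Step~2 of \Cref{lem:hm-nonsingular}, noting that for $z\in\mathcal V_I$ the forward propagation factor $J_{z,r+1}:\nabla\div\tilde\sigma_r|_{T_{r+1}}(z)$ equals~$1$, which is exactly your observation that the bidiagonal system is well-conditioned and closes via $c_1+c_2+c_3+c_4=0$. Your scaling bookkeeping is slightly off (with the unnormalised $\sigma_r$ one has $|\alpha_r|\lesssim h^2|\nabla q(z)|$ and $|\nabla q(z)|\lesssim h^{-2}\|q\|_{L^2(\st(z))}$ in two dimensions), but the two slips cancel and the final estimate is correct.
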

\begin{proof}
The proof is similar to the second step of \Cref{lem:hm-nonsingular}. Note in this case we have $J_{z,r+1} : \nabla \div \tilde \sigma_r|_{T_{r+1}}(z) = 1$. Therefore, we finish the proof.   
\end{proof}

The following theorem gives the full characterization of $\div \Sigma_k^1$. 
\begin{theorem}
\label{thm:hermite}
The divergence of the space $\Sigma_k^1$ is characterized as
\[
\div \Sigma_k^1 = \left\{ q \in Q_{k-1}^0 : J_z : \nabla q_1(v) + J_z : \nabla q_3(v) = J_z : \nabla q_2(v) + J_z : \nabla q_4(v), \ \forall v \in \mathcal{V}_I \right\},
\]
where $q_i := q|_{T_i}$ denotes the restriction of $q$ to element $T_i$ around a singular vertex. Moreover, if $\Theta_I(x) \geq \theta_I > 0$ for all $x \in \mathcal{V} \setminus \mathcal{V}_I$, then for any $q \in \div \Sigma_k^1$, there exists $\sigma \in \Sigma_k^1$ such that
\[
\div \sigma = q \quad \text{and} \quad \|\sigma\|_{H^1} \lesssim \theta_I^{-2} \|q\|_{L^2}.
\]
\end{theorem}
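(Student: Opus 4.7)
The plan is to adapt the three-stage construction from the proof of \Cref{thm:c2}, replacing the $C^2$ vertex matching used there with the vertex-gradient modifiers supplied by \Cref{lem:hm-nonsingular} and \Cref{lem:hm-singular}. Necessity of the compatibility identity is immediate: for any $\sigma \in \Sigma_k^1 \subseteq H^1(\Omega;\mathbb S^2)$ and each $z \in \mathcal V_I$, the continuity of $\sigma$ on the patch lets us apply \Cref{lem:necessary-type-I} to $\tau=\sigma$, which is precisely \eqref{eq:hm-cond} for $q = \div\sigma$. Moreover $C^1$-continuity of $\sigma$ at vertices forces $\div\sigma(v)$ to be single-valued, so $q \in Q_{k-1}^0$.

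For sufficiency with the quantitative bound, fix $q \in Q_{k-1}^0$ satisfying \eqref{eq:hm-cond} at every $z \in \mathcal V_I$, and construct $\sigma \in \Sigma_k^1$ with $\div\sigma = q$ in three stages. Stage one mirrors the Scott--Zhang-type argument from \Cref{thm:c2}: using \Cref{inf-sup-cont.} to supply a continuous lift $\tau \in H^1(\Omega;\mathbb S^2)$ of $q$ with $\|\tau\|_{H^1}\lesssim\|q\|_{L^2}$, one produces $\sigma^{(0)}\in\Sigma_k^1$ matching $\div\sigma^{(0)}(v)=q(v)$ at every vertex, the edge moments of $\tau n_e$ against $P_{k-6}(e;\mathbb S^2)$, and the interior moments against $P_{k-3}^{(0)}(T;\mathbb S^2)$; the bound $\|\sigma^{(0)}\|_{H^1}\lesssim\|q\|_{L^2}$ follows, and in addition $\int_T\div\sigma^{(0)}\cdot p = \int_T q\cdot p$ for every $p\in\mathbf{RM}$ and $T\in\mathcal T$. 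In Stage two, at each vertex we correct the mismatch $\nabla\div\sigma^{(0)}|_T(z) - \nabla q|_T(z)$: at a nonsingular vertex $z\in\mathcal V\setminus\mathcal V_I$ apply \Cref{lem:hm-nonsingular}, and at a singular vertex $z\in\mathcal V_I$ apply \Cref{lem:hm-singular}, whose hypothesis \eqref{eq:hm-cond} is satisfied because both $q$ and $\div\sigma^{(0)}$ obey it separately (the latter by the necessity direction already established). Summing the locally-supported correctors gives $\sigma^{(1)}\in\Sigma_k^1$ with $\div\sigma^{(1)}(v)=q(v)$ and $\nabla\div\sigma^{(1)}|_T(z)=\nabla q|_T(z)$ on every incident $T$; the worst-case factor from \Cref{lem:hm-nonsingular} together with finite overlap of patches yields $\|\sigma^{(1)}\|_{H^1}\lesssim\theta_I^{-2}\|q\|_{L^2}$. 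Stage three applies \Cref{cor:bubble-stable} elementwise to the residual $q-\div\sigma^{(1)}$, producing a bubble $\sigma^{(2)}\in\Sigma_k^1$ with $\div\sigma^{(2)}=q-\div\sigma^{(1)}$ and $\|\sigma^{(2)}\|_{H^1}\lesssim\theta_I^{-2}\|q\|_{L^2}$; setting $\sigma:=\sigma^{(1)}+\sigma^{(2)}$ completes the construction.

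The main technical obstacle is verifying that the elementwise $\mathbf{RM}$-orthogonality required by \Cref{cor:bubble-stable} survives all prior stages. Stage one is handled directly, since matching the normal traces of $\sigma^{(0)}$ against $\tau n$ on every edge forces $\int_T\div\sigma^{(0)}\cdot p = \int_{\partial T}\sigma^{(0)} n\cdot p = \int_{\partial T}\tau n\cdot p = \int_T q\cdot p$ for any linear $p\in\mathbf{RM}$. For Stage two, each corrector from \Cref{lem:hm-nonsingular} and \Cref{lem:hm-singular} is built from tensors of the form $\psi_z^3\psi_{y_s}^2(\,\cdot\,)$ that vanish on $\partial T$ for every $T\in\st(z)$, so $\int_T\div\sigma_{\mathrm{corr}}\cdot p = \int_{\partial T}\sigma_{\mathrm{corr}} n\cdot p = 0$ for all $p\in\mathbf{RM}$, preserving the $\mathbf{RM}$-identity elementwise and legitimizing the bubble step. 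Combining the three stages yields the final bound $\|\sigma\|_{H^1}\lesssim\theta_I^{-2}\|q\|_{L^2}$ and hence the theorem.
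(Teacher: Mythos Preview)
Your overall structure is sound, but there is a genuine gap in the justification of Stage three. You claim that every corrector produced by \Cref{lem:hm-nonsingular} and \Cref{lem:hm-singular} is built from tensors $\psi_z^3\psi_{y_s}^2(\cdot)$ that ``vanish on $\partial T$ for every $T\in\st(z)$.'' This is false: on the interior edge $\overline{zy_s}$, neither $\psi_z$ nor $\psi_{y_s}$ is zero, so $\psi_z^3\psi_{y_s}^2$ does not vanish there. Consequently $\int_{\partial T_s}\sigma_{\mathrm{corr}}\,n\cdot p = \int_{\overline{zy_s}}\sigma_{\mathrm{corr}}\,n\cdot p$ is in general nonzero for $p\in\mathbf{RM}$, and the $\mathbf{RM}$-orthogonality set up in Stage one is destroyed by Stage two. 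Without it, \Cref{cor:bubble-stable} cannot be invoked elementwise. (The bubble pieces $\varphi_i=\psi_z^2\psi_+\psi_-(\cdot)$ used in \Cref{lemma:NzT} do vanish on $\partial T$, but the edge-type pieces $\sigma_s^+$ and $\sigma_r$ in Steps~1--2 of \Cref{lem:hm-nonsingular} do not.)

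The paper avoids this issue altogether by a cleaner reduction: it skips your Stage one entirely, applies the vertex-gradient correctors from \Cref{lem:hm-nonsingular,lem:hm-singular} directly to $q$ to obtain $\sigma^{(1)}=\sum_{z}\sigma_z$, observes that $q-\div\sigma^{(1)}\in Q_{k-1}^1$ (its gradient is single-valued, in fact zero, at every vertex), and then invokes \Cref{thm:c2} as a black box to lift $q-\div\sigma^{(1)}$ to some $\sigma^{(2)}\in\Sigma_k^2\subset\Sigma_k^1$. The $\mathbf{RM}$-moment bookkeeping is thus absorbed into the already-proved \Cref{thm:c2} rather than repeated by hand. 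Your argument can be repaired either by adopting this route, or by re-matching the edge moments after Stage two (which does not disturb the vertex values or gradients) before applying the bubble corollary---but you would need to control the $H^1$ cost of that re-matching.
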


\begin{proof}
By \Cref{lem:hm-nonsingular,lem:hm-singular}, for each vertex $z \in \mathcal{V}$, there exists $\sigma_z \in \Sigma_k^1$ such that:
\begin{itemize}
\item $\sigma_z$ is supported in $\st(z)$,
\item $\nabla \div \sigma_z(v) = 0$ for all $v \neq z$,
\item $\nabla \div \sigma_z|_T(z) = \nabla q|_T(z)$ for all $T \in \st (z)$,
\end{itemize}
with the bound
\[
\|\sigma_z\|_{H^1} \lesssim \theta_I^{-2} \|q\|_{L^2(\st(z))}.
\]
Define $\sigma^{(1)} = \sum_{z \in \mathcal{V}} \sigma_z$. Then, $\nabla \div \sigma^{(1)} - \nabla q$ vanishes at each vertex in $\mathcal{V}$. Since each $\sigma_z$ is supported in $\st(z)$, it follows that
\[
\|\sigma^{(1)}\|_{H^1} \lesssim \theta_I^{-2} \|q\|_{L^2}.
\]
Next, observe that $q^{(1)} := q - \div \sigma^{(1)} \in Q_{k-1}^1$. By the surjectivity of the divergence operator from $\Sigma_k^2$ to $Q_{k-1}^1$, there exists $\sigma^{(2)} \in \Sigma_k^2$ such that
\[
\div \sigma^{(2)} = q^{(1)} = q - \div \sigma^{(1)},
\]
with
\[
\|\sigma^{(2)}\|_{H^1} \lesssim \|q^{(1)}\|_{L^2} \leq \|q\|_{L^2} + \|\div \sigma^{(1)}\|_{L^2} \lesssim \theta_I^{-2} \|q\|_{L^2}.
\]
Finally, set $\sigma = \sigma^{(1)} + \sigma^{(2)}$. Then,
\[
\div \sigma = \div \sigma^{(1)} + \div \sigma^{(2)} = \div \sigma^{(1)} + (q - \div \sigma^{(1)}) = q,
\]
and
\[
\|\sigma\|_{H^1} \leq \|\sigma^{(1)}\|_{H^1} + \|\sigma^{(2)}\|_{H^1} \lesssim \theta_I^{-2} \|q\|_{L^2},
\]
completing the proof.
\end{proof}

\subsection{The pair $\Sigma_k^0 \times Q_{k-1}^{-1}$: Matrix-version Scott--Vogelius pair} 
\label{sec:Sigma0}

This subsection establishes the characterization and stability of the divergence image of the symmetric tensor Lagrange elements, which can be regarded as a matrix-version of the Scott--Vogelius pair. As previously noted, the primary challenge arises from the presence of two types of singularities, with Type II singularities being of particular interest in the stress gradient equation.

We first present the necessary results for both types of singularities. 
\begin{lemma}[Necessary Conditions for Type I Singularity]
\label{lem:typeI-singularity}
Suppose $\sigma$ is continuous and piecewise $C^2$, with $q = \div \sigma$. Then, at any vertex $v \in \mathcal{V}_I$, the following equations hold:
\begin{equation}
\label{eq:lag-cond-typeI}
q_1(v) + q_3(v) = q_2(v) + q_4(v),
\end{equation}
\begin{equation}
\label{eq:lag-cond-typeI-grad}
J_z : \nabla q_1(v) + J_z : \nabla q_3(v) = J_z : \nabla q_2(v) + J_z : \nabla q_4(v).
\end{equation}
\end{lemma}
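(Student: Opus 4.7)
The plan is to treat the two identities separately. The gradient identity \eqref{eq:lag-cond-typeI-grad} is already proved by \Cref{lem:necessary-type-I} under precisely the same ``continuous, piecewise $C^2$'' hypothesis, applied with $\tau = \sigma$ and $\nabla \div \sigma = \nabla q$; so nothing further is needed there, and my proposal concentrates on establishing the value identity \eqref{eq:lag-cond-typeI} directly from tangential continuity of $\sigma$ at $z$.

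First I would fix local coordinates at the singular vertex $z$. Since $v \in \mathcal{V}_I$, the four incident edges $\overline{zy_1}, \overline{zy_2}, \overline{zy_3}, \overline{zy_4}$ lie on exactly two distinct lines, with $\overline{zy_1}$ collinear with $\overline{zy_3}$ and $\overline{zy_2}$ collinear with $\overline{zy_4}$. Rotate coordinates so that $t_1 = (1,0)$ and $t_2 = (\cos\theta, \sin\theta)$ for some $\theta$ with $\sin\theta \neq 0$; then automatically $t_3 = -t_1$ and $t_4 = -t_2$. Because $\sigma$ is continuous on each edge, every tangential derivative of $\sigma$ is continuous across that edge at $z$, and absorbing the sign flips from $t_3$ and $t_4$ yields
\begin{equation*}
\partial_x \sigma|_{T_1}(z) = \partial_x \sigma|_{T_4}(z), \qquad \partial_x \sigma|_{T_2}(z) = \partial_x \sigma|_{T_3}(z),
\end{equation*}
\begin{equation*}
\partial_{t_2} \sigma|_{T_1}(z) = \partial_{t_2} \sigma|_{T_2}(z), \qquad \partial_{t_2} \sigma|_{T_3}(z) = \partial_{t_2} \sigma|_{T_4}(z).
\end{equation*}

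Next I would combine these pairwise. Summing the first line gives $\partial_x(\sigma|_{T_1} + \sigma|_{T_3})(z) = \partial_x(\sigma|_{T_2} + \sigma|_{T_4})(z)$, and summing the second line together with eliminating the $\partial_x$ component (using $\sin\theta \neq 0$) produces the analogous identity for $\partial_y$. Consequently $\nabla(\sigma|_{T_1} + \sigma|_{T_3})(z) = \nabla(\sigma|_{T_2} + \sigma|_{T_4})(z)$, and since $\div$ acts linearly on $\nabla \sigma$, this immediately yields \eqref{eq:lag-cond-typeI}. The main (and essentially only) obstacle is organizational: one must pair the four triangles with the two collinear line directions so that the cancellations align, but this is encoded directly in the definition of $\mathcal{V}_I$ and requires no additional assumption on the triangulation. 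Combined with the gradient identity inherited from \Cref{lem:necessary-type-I}, these two necessary conditions are expected to account for the full rank-deficiency contribution of each Type I vertex recorded in \Cref{tab:rank-def}.
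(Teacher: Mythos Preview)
Your proposal is correct and follows essentially the same route as the paper: both arguments reduce \eqref{eq:lag-cond-typeI} to tangential continuity of $\sigma$ at $z$ along the two collinear directions, with the paper expressing this via the basis $\{t_1t_1^T,\,t_2t_2^T,\,t_1t_2^T+t_2t_1^T\}$ of $\mathbb S^2$ and you via the full gradient $\nabla\sigma$. Your edge--triangle pairings differ from the paper's convention (where $\overline{zy_s}$ separates $T_s$ and $T_{s+1}$), but the alternating-sum conclusion is symmetric under this swap, so the argument goes through unchanged.
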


\begin{proof}
It suffices to prove \eqref{eq:lag-cond-typeI}, as \eqref{eq:lag-cond-typeI-grad} is addressed in \eqref{eq:hm-cond}. The condition in \eqref{eq:lag-cond-typeI} mirrors the analysis of the Scott-Vogelius Stokes pair.  
The symmetric tensors $t_1 t_1^T$, $t_2 t_2^T$, and $t_1 t_2^T + t_2 t_1^T$ form a basis of $\mathbb{S}^2$. Thus, we can express
\[
\sigma = \alpha t_1 t_1^T + \beta t_2 t_2^T + \gamma (t_1 t_2^T + t_2 t_1^T),
\]
where $\alpha$, $\beta$, and $\gamma$ are continuous Lagrange functions. The divergence is
\[
\div \sigma|_{T_i}(z) = (\partial_{t_1} \alpha_i + \partial_{t_2} \gamma_i) t_1 + (\partial_{t_2} \beta_i + \partial_{t_1} \gamma_i) t_2,
\]
where $\alpha_i = \alpha|_{T_i}(z)$, and $\beta_i$, $\gamma_i$ are similarly defined. Since $\sigma$ is continuous across elements, evaluating at $v \in \mathcal{V}_I$ and summing over elements yields \eqref{eq:lag-cond-typeI}, completing the proof.
\end{proof}

Now we focus on the Type II singularity vertex. We first consider the non-degenerate case.  Let $T_1,\cdots, T_6$ be the faces, with vertices $y_1,\cdots, y_6$, and the angle $\theta_1, \cdots, \theta_6$. See \Cref{fig:Type II-singular} for an illustration.  
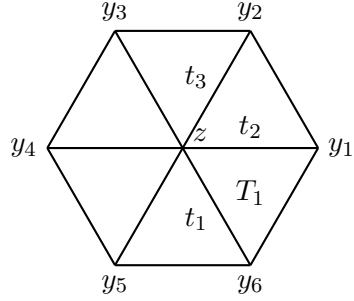
\begin{figure}[htbp]
\begin{tikzpicture}[scale=1.8]
\draw[thick] (0,0) -- (1,0);
\draw[thick] (0,0) -- (1/2,1.732/2);
\draw[thick] (1/2,1.732/2) -- (1,0);
\draw[thick] (0,0) -- (1/2,-1.732/2);
\draw[thick] (1,0) -- (1/2,-1.732/2);
\draw[thick] (1,0) -- (3/2,1.732/2);
\draw[thick] (2,0) -- (3/2,1.732/2);
\draw[thick] (2,0) -- (1,0);
\draw[thick] (1/2,1.732/2) -- (3/2,1.732/2);
\draw[thick] (1/2,-1.732/2) -- (3/2,-1.732/2);
\draw[thick] (1,0) -- (3/2,-1.732/2);
\draw[thick] (2,0) -- (3/2,-1.732/2);
 \node [right=] at (1,0.1) {$z$};
 \node [right=] at (2,0) {$y_1$};
\node [above=] at (1/2,1.732/2) {$y_3$};    
\node [above=] at (3/2,1.732/2) {$y_2$};
 \node [left=] at (0,0) {$y_4$};
 \node [below=] at (1/2,-1.732/2) {$y_5$};
 \node [below=] at (3/2,-1.732/2) {$y_6$};
 \node [below=] at (3/2,-1.732/2+0.7) {$T_1$};
  \node [below=] at (1.1,-1.732/2+0.5) {$t_1$};
  \node [above=] at (1.5,0) {$t_2$};
  \node [above=] at (1.1,+1.732/2-0.5) {$t_3$};
		\end{tikzpicture}
		\caption{An illustration for type II singularity, nondegenerate vertex.}
		\label{fig:Type II-singular}
\end{figure}

With these notations, we have the following lemma, showing the necessary conditions for type II singular vertices.

\begin{lemma}[Necessary Conditions for Type II Singularity]
\label{lem:necessary-type-II}
Let $z$ be a Type II singular vertex, with four elements $T_1, T_2, T_3, T_4, T_5, T_6$ sharing $z$, as shown in Figure~\ref{fig:Type II-singular}. Then for any continuous, piecewise $C^1$, symmetric matrix-valued function $\sigma \in C^0(\st (z);\mathbb S^2)$, it holds that
\begin{equation}
\label{eq:lag-cond-typeII}
\sum_{i=1}^6 (-1)^i \sin \theta_i \, \div \sigma_i(z) \cdot n_{i+1} = 0,
\end{equation}
where $\sigma_i = \sigma|_{T_i}$ is the restriction of $\sigma$ to element $T_i$.
\end{lemma}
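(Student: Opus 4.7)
The approach parallels that of Lemma~\ref{lem:necessary-type-I} for Type I singularities, but exploits the three-line geometry instead of the two-line one. The plan has three stages: (a) derive a local elementwise identity expressing $\sin\theta_i\,\div\sigma_i(z)$ as a linear combination of the two tangential derivatives of $\sigma$ along the edges of $T_i$ at $z$; (b) use the $C^0$ continuity of $\sigma$ across each edge $\overline{zy_s}$ to identify these as a single-valued symmetric matrix $D_s:=(\partial_{t_s}\sigma)(z)$; and (c) simplify the resulting sum using the antipodal relation $n_{i+3}=-n_i$ that encodes the Type II hypothesis.

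For stage (a), on each element $T_i$ with edge tangents $t_{i-1}$ and $t_i$ at $z$ I would expand the Euclidean gradient in the dual basis of $\{t_{i-1},t_i\}$ to obtain $\nabla = \frac{1}{\sin\theta_i}\bigl(-n_i\,\partial_{t_{i-1}}+n_{i-1}\,\partial_{t_i}\bigr)$ at $z$. Applying this row-wise to the symmetric matrix $\sigma_i$ yields the pointwise identity
\begin{equation}\sin\theta_i\,\div\sigma_i(z)=(\partial_{t_i}\sigma_i)(z)\,n_{i-1}-(\partial_{t_{i-1}}\sigma_i)(z)\,n_i.\end{equation}
Because $\sigma$ is globally $C^0$, the tangential derivative across $\overline{zy_s}$ agrees from either side, so $D_s$ is well-defined and symmetric, and $(\partial_{t_i}\sigma_i)(z)=D_i$, $(\partial_{t_{i-1}}\sigma_i)(z)=D_{i-1}$.

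Substituting into the target sum and dotting with $n_{i+1}$ gives
\begin{equation}S:=\sum_{i=1}^{6}(-1)^i\sin\theta_i\,\div\sigma_i(z)\cdot n_{i+1}=\sum_{i=1}^{6}(-1)^i\bigl(n_{i+1}^T D_i n_{i-1}-n_{i+1}^T D_{i-1}n_i\bigr).\end{equation}
Applying the cyclic shift $i\mapsto i+1$ (indices mod $6$) to the second piece and using the symmetry $D_j=D_j^T$ to rewrite $n_{j+2}^T D_j n_{j+1}$ as $n_{j+1}^T D_j n_{j+2}$ collapses the two sums into
\begin{equation}S=\sum_{i=1}^{6}(-1)^i\,n_{i+1}^T D_i\bigl(n_{i-1}+n_{i+2}\bigr).\end{equation}
The Type II hypothesis forces the six edges to lie on exactly three distinct lines, so $t_{i+3}=-t_i$ and hence $n_{i+3}=-n_i$, which is equivalent to $n_{i+2}=-n_{i-1}$. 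Each summand therefore vanishes and $S=0$.

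The main obstacle I anticipate is the careful sign bookkeeping in the dual-basis step of (a) and in the cyclic reindexing leading to $n_{i+2}=-n_{i-1}$: one must verify that the formula in the first display is correct when $\theta_i$ is the counterclockwise angle from $t_{i-1}$ to $t_i$, consistent with Figure~\ref{fig:Type II-singular}. Once that identity is in hand, the remainder of the argument is a purely algebraic cancellation driven by edge-continuity of the tangential derivative, symmetry of $\sigma$, and the single geometric input $n_{i+3}=-n_i$.
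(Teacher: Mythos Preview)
Your argument is correct and the cancellation goes through exactly as you describe. The only spot to watch is the sign in your first display: with the paper's convention $n_s=t_s^{\perp}$ and $[a,b]^{\perp}=[b,-a]$, the dual-basis expansion actually gives $\sin\theta_i\,\div\sigma_i(z)=D_{i-1}n_i-D_i n_{i-1}$, the negative of what you wrote. You already flagged this bookkeeping as the main obstacle, and it is harmless here since it only negates $S$.

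Your route is genuinely different from the paper's. The paper exploits the three-line geometry at the outset by expanding $\sigma$ in the global basis $\{t_1t_1^T,\,t_2t_2^T,\,t_3t_3^T\}$ of $\mathbb{S}^2$ (using the three distinct edge directions), writing $\sigma=\alpha\,t_1t_1^T+\beta\,t_2t_2^T+\gamma\,t_3t_3^T$ with continuous scalar coefficients $\alpha,\beta,\gamma$. It then computes each of the six terms $\sin\theta_i\,\div\sigma_i(z)\cdot n_{i+1}$ explicitly and cancels them pairwise using continuity of $\partial_{t_j}\alpha,\partial_{t_j}\beta,\partial_{t_j}\gamma$ across shared edges together with $\sin\theta_i=\sin\theta_{i+3}$. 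Your approach instead stays in a local per-element frame, packages the edge-continuity into the single object $D_s$, and lets the index shift plus the symmetry $D_i=D_i^T$ do all the work, so that the three-line hypothesis enters only at the very end through $n_{i-1}+n_{i+2}=0$. Your version is more coordinate-free and makes the structural role of the symmetry of $\sigma$ transparent; the paper's explicit-basis computation is more concrete and has the advantage that it immediately suggests the analogous calculation for the degenerate Type~II cases treated later in the appendix.
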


The detailed proof of \Cref{lem:necessary-type-II} is similar to that of \Cref{lem:necessary-type-I}, and we show the proof in the appendix.

\begin{remark}
{
To see how \eqref{eq:lag-cond-typeII} is derived, we can let $\sigma$ be the Lagrange $P_1$ finite element space $\lambda_s \tau $, and $\tau$ be any piecewise constant symmetric matrix. We then have 
$$\div \sigma_s(z) = \frac{\tau \cdot n_{s-1}}{|zy_s| \sin \theta_s} \text { and } \div \sigma_{s+1}(z) = -\frac{\tau \cdot n_{s+1}}{|zy_s| \sin \theta_{s+1}}.$$
Since $n_{s-1}\cdot n_{s+1} = - n_{s+1} \cdot n_{s+2}$, we can see that any $\sigma = \lambda_s \tau $ satisfies \eqref{eq:lag-cond-typeII}. 
}
\end{remark}

\begin{remark}[Necessary case on degenerate case]
We make a remark here on the degenerate case. A straightforward observation is that to obtain the corresponding necessary condition, we can pretend the vertex is non-degenerate by elongating each line and then write down the corresponding equation \eqref{eq:lag-cond-typeII}. Therefore, we adopt the same notation as in the non-degenerate case. Namely, we use $T_{i}$, $t_i$, $n_i$ for $i = 1,2,\cdots, 6$ for the imaging elements and vectors, and use the notations $T_{s,s+1}$ to denote the actual elements. Therefore, we adhere to the unified condition \eqref{eq:lag-cond-typeII}. See \Cref{fig:Type II-singular} for an illustration. Note that this actually leaves lots of technical details to the verification of the sufficiency, see \Cref{sec:appendix}.

\begin{figure}[htbp]
		\begin{tikzpicture}[scale=1.5]
\draw[dashed] (0,0) -- (1,0);
\draw[thick] (1/2,1.732/2) -- (1,0);
%\draw[thick] (1/2,1.732/2) -- (1/2,-1.732/2);
%\draw[thick] (0,0) -- (1/2,-1.732/2);
\draw[thick] (1,0) -- (1/2,-1.732/2);
\draw[thick] (1,0) -- (3/2,1.732/2);
%\draw[thick] (2,0) -- (3/2,1.732/2);
\draw[thick] (2,0) -- (1,0);
%\draw[thick] (1/2,1.732/2) -- (3/2,1.732/2);
%\draw[thick] (1/2,-1.732/2) -- (3/2,-1.732/2);
\draw[thick] (1,0) -- (3/2,-1.732/2);
%\draw[thick] (2,0) -- (3/2,-1.732/2);
\node [below=] at (3/2,-1.732/2+0.7) {$T_2$};
\node [below=] at (3/2,+0.5) {$T_3$};
\node [below=] at (1,0.8) {$T_4$};
\node [below=] at (0.2,0.2) {$T_{5,6}$};
\node [below=] at (0.2,0.2) {$T_{5,6}$};
\node [below=] at (1.,-0.5) {$T_{1}$};
		\end{tikzpicture}
		\qquad
				\begin{tikzpicture}[scale=1.5]
\draw[dashed] (0,0) -- (1,0);
\draw[thick] (1/2,1.732/2) -- (1,0);
%\draw[thick] (1/2,1.732/2) -- (1/2,-1.732/2);
%\draw[thick] (0,0) -- (1/2,-1.732/2);
\draw[thick] (1,0) -- (1/2,-1.732/2);
\draw[dashed] (1,0) -- (3/2,1.732/2);
%\draw[thick] (2,0) -- (3/2,1.732/2);
\draw[thick] (2,0) -- (1,0);
%\draw[thick] (1/2,1.732/2) -- (3/2,1.732/2);
%\draw[thick] (1/2,-1.732/2) -- (3/2,-1.732/2);
\draw[thick] (1,0) -- (3/2,-1.732/2);
%\draw[thick] (2,0) -- (3/2,-1.732/2);
\node [below=] at (3/2,-1.732/2+0.7) {$T_2$};
\node [below=] at (3/2-0.1,+0.8) {$T_{3,4}$};

\node [below=] at (0.2,0.2) {$T_{5,6}$};
\node [below=] at (0.2,0.2) {$T_{5,6}$};
\node [below=] at (1.,-0.5) {$T_{1}$};
		\end{tikzpicture}
\qquad 
				\begin{tikzpicture}[scale=1.5]
\draw[dashed] (0,0) -- (1,0);
\draw[thick] (1/2,1.732/2) -- (1,0);
%\draw[thick] (1/2,1.732/2) -- (1/2,-1.732/2);
%\draw[thick] (0,0) -- (1/2,-1.732/2);
\draw[thick] (1,0) -- (1/2,-1.732/2);
\draw[dashed] (1,0) -- (3/2,1.732/2);
%\draw[thick] (2,0) -- (3/2,1.732/2);
\draw[thick] (2,0) -- (1,0);
%\draw[thick] (1/2,1.732/2) -- (3/2,1.732/2);
%\draw[thick] (1/2,-1.732/2) -- (3/2,-1.732/2);
\draw[dashed] (1,0) -- (3/2,-1.732/2);
%\draw[thick] (2,0) -- (3/2,-1.732/2);
\node [below=] at (3/2-0.1,-1.732/2+0.5) {$T_{1,2}$};
\node [below=] at (3/2-0.1,+0.8) {$T_{3,4}$};

\node [below=] at (0.2,0.2) {$T_{5,6}$};
% \node [below=] at (0.2,0.2) {$T_{1,2}$};

		\end{tikzpicture}
\caption{An illustration of Type II singularities: degenerate vertices (Type II-1, II-2, II-3 from left to right).}
\label{fig:Type II-singularity-type}
	\end{figure}
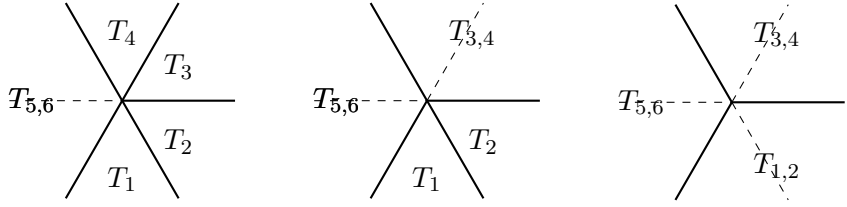
    \end{remark}

Now we consider the sufficiency part. The following decomposition will be frequently used:
$$u = \frac{u\cdot n_s}{n_s\cdot t_{s-1}} t_{s-1} + \frac{u\cdot n_{s-1}}{n_{s-1}\cdot t_{s}} t_s.$$ We first focus on the non-singular vertex, which is illustrated in \Cref{fig:Type II-regular}.

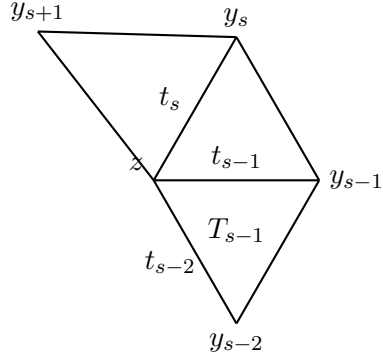
\begin{figure}[htbp]
\begin{tikzpicture}[scale=2.2]
%\draw[thick] (0,0) -- (1,0);
%\draw[thick] (0,0) -- (1/2,1.732/2);
\draw[thick] (0.6/2,1.8/2) -- (1,0);
%\draw[thick] (0,0) -- (1/2,-1.732/2);
%\draw[thick] (1,0) -- (1/2,-1.732/2);
\draw[thick] (1,0) -- (3/2,1.732/2);
\draw[thick] (2,0) -- (3/2,1.732/2);
\draw[thick] (2,0) -- (1,0);
\draw[thick] (0.6/2,1.8/2)  -- (3/2,1.732/2);
%\draw[thick] (1/2,-1.732/2) -- (3/2,-1.732/2);
\draw[thick] (1,0) -- (3/2,-1.732/2);
\draw[thick] (2,0) -- (3/2,-1.732/2);
 \node [left=] at (1,0.1) {$z$};
 \node [right=] at (2,0) {$y_{s-1}$};
\node [above=] at (0.6/2,1.8/2)  {$y_{s+1}$};    
\node [above=] at (3/2,1.732/2) {$y_s$};
% \node [left=] at (0,0) {$y_4$};
% \node [below=] at (1/2,-1.732/2) {$y_5$};
 \node [below=] at (3/2,-1.732/2) {$y_{s-2}$};
 \node [below=] at (3/2,-1.732/2+0.7) {$T_{s-1}$};
  \node [below=] at (1.1,-1.732/2+0.5) {$t_{s-2}$};
  \node [above=] at (1.5,0) {$t_{s-1}$};
  \node [above=] at (1.1,+1.732/2-0.5) {$t_{s}$};
		\end{tikzpicture}		\caption{An illustration for $T_s$ for a non-singular vertex.}
		\label{fig:Type II-regular}
\end{figure}

\begin{lemma}[Sufficient Conditions for Nonsingular Vertex]
\label{lem:nonsingular-sufficient}
Suppose $z \in \mathcal{V} \setminus (\mathcal{V}_I \cup \mathcal{V}_{II})$. Then, for any $q \in Q_{k-1}^0$, there exists $\sigma \in \Sigma_k^0$ such that:
\begin{enumerate}
\item $\sigma$ is supported in $\st(z)$.
\item $\div \sigma(v) = 0$ for all $v \neq z$.
\item $\div \sigma|_T(z) = q|_T(z)$ for all $T \in \st(z)$.
\end{enumerate}
Moreover, the bound holds:
\[
\|\sigma\|_{H^1} \lesssim \Theta_{II}(z)^{-2} \|q\|_{L^2(\st(z))}.
\]
\end{lemma}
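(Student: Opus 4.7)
The approach mirrors the argument for \Cref{lem:hm-nonsingular}, but works one polynomial degree lower and controls $\div \sigma$ at $z$ rather than $\nabla \div \sigma$. For each edge $zy_s$ of the patch and each symmetric matrix $\tau \in \mathbb{S}^2$, I would introduce the edge-bubble building block
\[ \sigma_s^{\tau} := \psi_z^2\, \psi_{y_s}\, \tau \in \Sigma_3^0 \subset \Sigma_k^0,\]
which is supported in $T_s \cup T_{s+1}$. Using $\nabla \psi_{y_s}|_{T_s}(z) = -n_{s-1}/h_s$ and $\nabla \psi_{y_s}|_{T_{s+1}}(z) = n_{s+1}/h_s$ (with the $h_s$ convention from \Cref{fig:star-z}), a direct check shows that $\sigma_s^{\tau}$ and $\div \sigma_s^{\tau}$ vanish at every vertex $v \neq z$, while
\[ \div \sigma_s^{\tau}\big|_{T_s}(z) = -\frac{\tau\, n_{s-1}}{h_s}, \qquad \div \sigma_s^{\tau}\big|_{T_{s+1}}(z) = \frac{\tau\, n_{s+1}}{h_s}.\]
Thus the ansatz $\sigma = \sum_{s} \sigma_s^{\tau_s}$ automatically satisfies conditions (1) and (2) of the lemma for any choice of matrices $\tau_s$, and condition (3) becomes the linear system
\[ c_r^{a}\, \tau_{r-1}\, n_r \;+\; c_r^{b}\, \tau_r\, n_{r-1} \;=\; q(z), \qquad T_r \in \st(z),\]
with shape-regular, nonzero constants $c_r^{a},c_r^{b}$.

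I would solve this system by telescoping counterclockwise around the patch, starting from an index $s_0$ at which the maximum defining $\Theta_{II}(z)$ is attained. At each step the map $\tau_r \mapsto \tau_r n_{r-1}$ from $\mathbb{S}^2$ to $\mathbb{R}^2$ is surjective with one-dimensional kernel (spanned by $t_{r-1} t_{r-1}^T$, since $t_{r-1} \cdot n_{r-1} = 0$), so $\tau_r$ can be chosen to match the required value of $\tau_r n_{r-1}$; the leftover scalar freedom adjusts $\tau_r n_{r+1}$, with transfer coefficient proportional to $t_{r-1} \cdot n_{r+1} = \sin(\theta_r + \theta_{r+1})$. After propagating once around the patch, a residual in $\mathbb{R}^2$ remains and must be absorbed by the three parameters attached to $T_{s_0-1}, T_{s_0}, T_{s_0+1}$ via building blocks tailored to this trio (analogous to the role played by \Cref{lemma:NzT} in the Hermite case).

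The main obstacle is the \emph{closing step}. A direct calculation, parallel to that of \Cref{lemma:NzT}, shows that the determinant of this final closing system factors, up to shape-regular constants, as
\[ \sin(\theta_{s_0} + \theta_{s_0+1})\, \sin(\theta_{s_0+1} + \theta_{s_0+2})\, \sin(\theta_{s_0} + \theta_{s_0+1} + \theta_{s_0+2}),\]
which is exactly the combination appearing in the definition of $\Theta_{II}(z)$. Hence for $z \notin \mathcal{V}_I \cup \mathcal{V}_{II}$ the closing system is invertible, and the choice of $s_0$ ensures that each factor is bounded below by $\Theta_{II}(z)$. A scaling argument identical to the one at the end of \Cref{lem:hm-nonsingular} then yields $\|\sigma\|_{H^1} \lesssim \Theta_{II}(z)^{-2}\, \|q\|_{L^2(\st(z))}$. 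This three-sine product structure is the algebraic reason why, for the Lagrange pair, both Type~I and Type~II singularities (and not only Type~I) constitute genuine obstructions.
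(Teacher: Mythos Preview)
Your approach is essentially the same as the paper's: both use the edge bubbles $\psi_z^2\psi_{y_s}\tau$ with $\tau\in\mathbb S^2$ and a telescope-then-absorb argument anchored at an index where $\Theta_{II}(z)$ is attained. The paper is simply more explicit about the absorbing step. Rather than speaking of a closing determinant, it writes down the two specific functions
\[
\varphi_{s,-}=\psi_{s-1}\psi_z^2\,t_{s-2}t_{s-2}^T,\qquad
\varphi_{s,+}=\psi_{s}\psi_z^2\,t_{s+1}t_{s+1}^T,
\]
whose divergences vanish in $T_{s-1}$ and $T_{s+1}$ respectively (because $t_{s-2}\cdot n_{s-2}=t_{s+1}\cdot n_{s+1}=0$) and in $T_s$ equal $\tfrac{\sin(\theta_s+\theta_{s-1})}{h}t_{s-2}$ and $\tfrac{\sin(\theta_s+\theta_{s+1})}{h}t_{s+1}$. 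These span $\mathbb R^2$ precisely because the third sine $\sin(\theta_{s-1}+\theta_s+\theta_{s+1})=t_{s-2}\cdot n_{s+1}$ is nonzero---this is exactly your triple-product observation. Solving for the coefficients by Cramer's rule, one sine cancels between numerator and denominator, giving the $\Theta_{II}^{-2}$ bound. For the propagation the paper uses the two-parameter family $\xi_r(u)$ of \eqref{eq:xi-r}, chosen so that $\div\xi_r(u)|_{T_r}(z)=u$ with bounded spillover into $T_{r+1}$; each $T_{s'}$ is then handled by telescoping separately toward $T_s$, whereas you telescope once around the full patch. The two organizations are equivalent.

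The one place your write-up is loose is the description of the closing system: it is really a $2\times2$ system (matching an $\mathbb R^2$-valued residual in $T_s$), not a three-parameter one, and making this precise would also clarify why the bound is $\Theta_{II}^{-2}$ rather than $\Theta_{II}^{-3}$.
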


\begin{proof}
The construction proceeds in several steps.

\textbf{Step 1:}  
    Choose an element $T_s \in \st(z)$ such that $|\sin(\theta_s + \theta_{s-1})| \geq \Theta_{II}(z) > 0$ and $|\sin(\theta_s + \theta_{s+1})| \geq \Theta_{II}(z) > 0$. Define
\[
\varphi_{s,-} = \psi_{s-1} \psi_z^2 t_{s-2} t_{s-2}^T, \quad \varphi_{s,+} = \psi_s \psi_z^2 t_{s+1} t_{s+1}^T.
\]
In $T_s$, we have:
\[
\div \varphi_{s,-}|_{T_s}(z) = \frac{\sin(\theta_s + \theta_{s-1})}{h_{s-2}} t_{s-2}, \quad \div \varphi_{s,+}|_{T_s}(z) = \frac{\sin(\theta_s + \theta_{s+1})}{h_s} t_{s+1}.
\]
Since $\sin(\theta_s + \theta_{s\pm 1}) \neq 0$ and $t_{s-2} \neq \pm t_{s+1}$, the vectors $\div \varphi_{s,-}|_{T_s}(z)$ and $\div \varphi_{s,+}|_{T_s}(z)$ form a basis for $\mathbb{R}^2$. Thus, for $q_s(z) = q|_{T_s}(z)$, there exist coefficients $\alpha$ and $\beta$ such that
\[
\alpha \div \varphi_{s,-}|_{T_s}(z) + \beta \div \varphi_{s,+}|_{T_s}(z) = q_s(z),
\]
where
\[
\alpha = \frac{q_s(z) \cdot n_{s+1} h_{s-2}}{(t_{s-2} \cdot n_{s+1}) \sin(\theta_s + \theta_{s-1})}, \quad \beta = \frac{q_s(z) \cdot n_{s-2} h_s}{(t_{s+1} \cdot n_{s-2}) \sin(\theta_s + \theta_{s+1})}.
\]
By shape regularity, $|\alpha|, |\beta| \lesssim \frac{|q_s(z)| h}{\Theta_{II}(z)^2}$. Next, we
\[
\tau_s = \alpha \varphi_{s,-} + \beta \varphi_{s,+},
\]
so
\[
\|\tau_s\|_{H^1} \lesssim \frac{|q_s(z)| h}{\Theta_{II}(z)^2} \lesssim \Theta_{II}(z)^{-2} \|q\|_{L^2(\st(z))}.
\]
Thus, $\tau_s$ satisfies:
\begin{enumerate}
\item $\div \tau_s|_T(z) = 0$ for $T \neq T_s$,
\item $\div \tau_s|_{T_s}(z) = q_s(z)$,
\item $\div \tau_s(y) = 0$ for all vertices $y \neq z$,
\item $\|\tau_s\|_{H^1} \lesssim \Theta_{II}(z)^{-2} \|q\|_{L^2(\st(z))}$.
\end{enumerate}

\textbf{Step 2:}  
Without loss of generality, let $s' = 0$. For each $r$ and $u \in \mathbb{R}^2$, define
\begin{equation}
\label{eq:xi-r}
\xi_r(u) = \psi_r \psi_z^2 \frac{u \cdot n_{r-1}}{n_{r-1} \cdot t_r} \frac{h_r}{\sin \theta_r} t_r t_r^T + \psi_r \psi_z^2 \frac{u \cdot n_r}{n_r \cdot t_{r-1}} \frac{h_{r-1}}{\sin \theta_r} (t_r t_{r-1}^T + t_{r-1} t_r^T).
\end{equation}
By definition, $\xi_r(u)$ is supported in $T_r \cup T_{r+1}$. In $T_r$,
\[
\div \left( \psi_r \psi_z^2 t_r t_r^T \right)|_{T_r}(z) = \frac{\sin \theta_r}{h_r} t_r, \quad \div \left( \psi_{r} \psi_z^2 (t_r t_{r-1}^T + t_{r-1} t_r^T) \right)|_{T_r}(z) = \frac{\sin \theta_r}{h_{r-1}} t_{r-1},
\]
so
\begin{equation} \label{eq:div-xi-r}
\div \xi_r(u)|_{T_r}(z) = u.
\end{equation}
In $T_{r+1}$,
\[
\div \left( \psi_r \psi_z^2 t_r t_r^T \right)|_{T_{r+1}}(z) = \frac{\sin \theta_{r+1}}{h_{r+1}} t_r,
\]
\[
\div \left( \psi_r \psi_z^2 (t_r t_{r-1}^T + t_{r-1} t_r^T) \right)|_{T_{r+1}}(z) = \frac{\sin \theta_{r+1}}{h_{r+1}} t_{r-1} + \frac{\sin(\theta_r + \theta_{r+1})}{h_{r+1}} t_r.
\]
Thus,
\begin{equation} \label{eq:div-xi-r-bound}
\|\xi_r(u)\|_{H^1} \lesssim \frac{|u| h}{\Theta_{II}(z)^2}, \quad |\div \xi_r(u)|_{T_{r+1}}(z)| \lesssim |u|.
\end{equation}
We now define the tensor sequentially:
\[
\tau'_0 = \xi_0(q_0(z)), \quad \tau'_1 = \xi_1(-\div \tau'_0|_{T_1}(z)), \quad \tau'_r = \xi_r(-\div \tau'_{r-1}|_{T_r}(z)), \quad r = 2, \ldots, s-1.
\]
In $T_r$,
\[
\div \tau'_{r-1}|_{T_r}(z) + \div \tau'_r|_{T_r}(z) = 0.
\]
By shape regularity,
\[
|\div \tau'_{r-1}|_{T_r}(z)| \lesssim |\div \tau'_{r-2}|_{T_{r-1}}(z)| \lesssim \cdots \lesssim |q_0(z)|,
\]
yielding
\[
\|\tau'_r\|_{H^1} \lesssim \Theta_{II}(z)^{-2} \|q\|_{L^2(\st(z))}.
\]
Define
\[
\tau_0 = \tau'_0 + \cdots + \tau'_{s-1} + \tau_s(-\div \tau'_{s-1}|_{T_s}(z)),
\]
which satisfies:
\begin{enumerate}
\item $\div \tau_0|_T(z) = 0$ for $T \neq T_0$,
\item $\div \tau_0|_{T_0}(z) = q_0(z),$
\item $\div \tau_0(y) = 0$ for all vertices $y \neq z$,
\item $\|\tau_0\|_{H^1} \lesssim \Theta_{II}(z)^{-2} \|q\|_{L^2(\st(z))}$.
\end{enumerate}

Finally, we can construct $\tau_{s'}$ for all other $T_{s'} \in \st(z)$, $s' \neq s$. Define $\sigma = \sum_{s'} \tau_{s'}$, which completes the proof.

%\begin{itemize}
%\item $\sigma$ is supported in $\st(z)$,
%\item $\div \sigma|_T(z) = q|_T(z)$ for all $T \in \st(z)$,
%\item $\div \sigma(y) = 0$ for all vertices $y \neq z$,
%\item $\|\sigma\|_{H^1} \lesssim \Theta_{II}(z)^{-2} \|q\|_{L^2(\st(z))}$.
%\end{itemize}
%This completes the proof.
\end{proof}

In what follows, we consider the case of the two types of singularities, which are critical to the overall analysis. We proceed by examining each type of singularity separately. The following lemma focuses on Type I singularities, indicating that \eqref{eq:lag-cond-typeI} is a sufficient condition. 

\begin{lemma}[Sufficient Conditions for Type I Singularity]
\label{lem:typeI-sufficient}
Suppose $z \in \mathcal{V}_I$ and $q \in Q_{k-1}^0$ satisfies \eqref{eq:lag-cond-typeI}. Then, there exists $\sigma \in \Sigma_k^0$ such that:
\begin{enumerate}
\item $\sigma$ is supported in $\st(z)$.
\item $\div \sigma(v) = 0$ for all $v \neq z$.
\item $\div \sigma|_T(z) = q|_T(z)$ for all $T \in \st(z)$.
\end{enumerate}
Moreover, the bound holds:
\[
\|\sigma\|_{H^1} \lesssim \|q\|_{L^2(\st(z))}.
\]
\end{lemma}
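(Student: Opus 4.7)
The approach is to adapt the propagation construction (Step~2) from the proof of \Cref{lem:nonsingular-sufficient}. Because $z$ is a Type~I singular vertex, the tangents satisfy $t_3 = -t_1$, $t_4 = -t_2$, and consequently $\sin(\theta_r + \theta_{r+1}) = 0$ for every $r$ (indices cyclic modulo~$4$). The Step~1 ``fixup'' used in the nonsingular proof is therefore unavailable (every candidate element is ``bad''), but the very same identity removes the second rank-one piece in the formula for $\div \xi_r(u)|_{T_{r+1}}(z)$, so that the leakage operators $M_r : u \mapsto \div \xi_{r-1}(u)|_{T_r}(z)$ remain bounded with constants depending only on shape regularity, without any $\Theta_I$-type small denominator.

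I would seek $\sigma$ in the form $\sigma = \sum_{r=1}^{4} \xi_r(v_r)$ with unknown vectors $v_r \in \mathbb{R}^2$, where $\xi_r$ is defined by \eqref{eq:xi-r}. By \eqref{eq:div-xi-r} and the leakage computation recalled above, the conditions $\div \sigma|_{T_r}(z) = q_r(z)$ reduce to the cyclic linear system
\begin{equation*}
v_r + M_r(v_{r-1}) = q_r(z), \qquad r = 1, 2, 3, 4 \pmod 4.
\end{equation*}
Eliminating around the loop yields $(I - M_1 M_4 M_3 M_2)\, v_1 = R$, with an explicit right-hand side $R$ assembled alternately from $q_r(z)$ and from compositions of the $M_r$'s. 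The key step is then to verify, using $t_{r+2} = -t_r$, $n_{r+2} = -n_r$, the angle identities $\theta_r = \theta_{r+2}$ and $\theta_1 + \theta_2 = \pi$, and $\sin(\theta_r + \theta_{r+1}) = 0$, that the solvability of this system is equivalent to the compatibility \eqref{eq:lag-cond-typeI}. Once a solution $\{v_r\}$ is produced, the bounds $|v_r| \lesssim |q(z)|$ together with the estimate $\|\xi_r(u)\|_{H^1} \lesssim |u|\, h$ from \eqref{eq:div-xi-r-bound} and an inverse/scaling inequality produce $\|\sigma\|_{H^1} \lesssim h\, |q(z)| \lesssim \|q\|_{L^2(\st(z))}$. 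The vanishing $\div \sigma(v)=0$ at vertices $v\neq z$ is automatic, since each $\xi_r(v_r)$ is of the form $\psi_r \psi_z^2 M$ and both $\psi_z$ and $\psi_r$ vanish at every vertex of $\st(z)$ other than $z$ and $y_r$.

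The main obstacle is the explicit linear-algebra verification that the cokernel of $I - M_1 M_4 M_3 M_2$ is precisely the subspace cut out by \eqref{eq:lag-cond-typeI}. Each $M_r$ is a sum of two rank-one operators expressed in the frames $\{t_r, t_{r-1}\}$ and $\{n_{r-1}, n_{r-2}\}$, so the fourfold composition requires careful bookkeeping of the coefficients $\tfrac{h_{r-1} \sin\theta_r}{h_r \sin\theta_{r-1}}$ and of the sign flips coming from $n_{r+2} = -n_r$. A potentially cleaner alternative, modeled on the proof of \Cref{lem:hm-singular}, is to propagate through only three of the four elements, say from $T_2$ to $T_4$ (skipping $T_1$), and then to use \eqref{eq:lag-cond-typeI} directly to read off that the residual in $T_1$ equals $q_1(z)$; in this variant the closure is essentially by inspection, the norm bound is immediate, and the full cyclic cokernel computation is bypassed.
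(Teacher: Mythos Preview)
Your ``cleaner alternative'' at the end is exactly the paper's proof, so the proposal is correct and lands on the same approach. The one point you should sharpen: you observe that $\sin(\theta_r+\theta_{r+1})=0$ removes one rank-one piece and conclude only that the leakage operators $M_r$ are \emph{bounded}. In fact the stronger statement holds: at a Type~I vertex one has $\div\xi_r(u)|_{T_{r+1}}(z)=u$ \emph{exactly}, i.e.\ $M_r=\mathrm{Id}$. Once you verify this (using $t_{r-1}=-t_{r+1}$, $n_{r-1}=-n_{r+1}$, $\sin\theta_{r+2}=\sin\theta_r$ together with the height ratios), both of your routes collapse: in the cyclic route $I-M_1M_4M_3M_2=0$, and in the three-term route the paper's choice
\[
\sigma=\xi_1(q_1)+\xi_2(q_2-q_1)+\xi_3(q_3-q_2+q_1)
\]
telescopes by inspection, with the compatibility $q_1+q_3=q_2+q_4$ giving $q_3-q_2+q_1=q_4$ and hence $\div\sigma|_{T_4}(z)=q_4$. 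No cokernel bookkeeping is needed.
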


\begin{proof}
From Step 2 of \Cref{lem:nonsingular-sufficient}, recall the construction of $\xi_r(u)$ defined in \eqref{eq:xi-r}. For $z \in \mathcal{V}_I$, the geometric configuration implies four elements $\sin \theta_{r+2} = \sin \theta_r$ and $\sin(\theta_r + \theta_{r+1}) = 0$. Thus, it holds that
\[
\div \left( \psi_r \psi_z^2 (t_r t_{r-1}^T + t_{r-1} t_r^T) \right)|_{T_{r+1}}(z) = \frac{\sin \theta_{r+1}}{h_{r+1}} t_{r-1}.
\]
Since $\div \left( \psi_r \psi_z^2 t_r t_r^T \right)|_{T_{r+1}}(z) = \frac{\sin \theta_{r+1}}{h_{r+1}} t_r$, the divergence of $\xi_r(u)$ in $T_{r+1}$ becomes:
\begin{equation}
\label{eq:div-xi-r}
\div \xi_r(u)|_{T_{r+1}}(z) = u.
\end{equation}
It then follows from \eqref{eq:xi-r} that $\div \xi_r(u)|_{T_r}(z) = u$. Now we define:
\[
\tau_1 := \xi_1(q_1(z)), \quad \tau_2 := \xi_2(q_2(z) - q_1(z)), \quad \tau_3 := \xi_3(q_3(z) - q_2(z) + q_1(z)).
\]
Since \eqref{eq:lag-cond-typeI} gives $q_1(z) + q_3(z) = q_2(z) + q_4(z)$, it follows that $q_3(z) - q_2(z) + q_1(z) = q_4(z)$. Thus, $\tau_3 = \xi_3(q_4(z))$. Set $\sigma = \tau_1 + \tau_2 + \tau_3$. Then:
\begin{itemize}
\item In $T_1$: $\div \sigma|_{T_1}(z) = \div \tau_1|_{T_1}(z) = q_1(z)$, since $\tau_2, \tau_3$ are supported in $T_2 \cup T_3$ and $T_3 \cup T_4$, respectively.
\item In $T_2$: $\div \sigma|_{T_2}(z) = \div \tau_1|_{T_2}(z) + \div \tau_2|_{T_2}(z) = q_1(z) + (q_2(z) - q_1(z)) = q_2(z)$.
\item In $T_3$: $\div \sigma|_{T_3}(z) = \div \tau_2|_{T_3}(z) + \div \tau_3|_{T_3}(z) = (q_2(z) - q_1(z)) + q_4(z) = q_3(z)$, using \eqref{eq:lag-cond-typeI}.
\item In $T_4$: $\div \sigma|_{T_4}(z) = \div \tau_3|_{T_4}(z) = q_4(z)$.
\end{itemize}
Since each $\xi_r$ is supported in $T_r \cup T_{r+1}$ and $\div \xi_r(y) = 0$ for other vertices $y \neq z$, $\sigma$ satisfies conditions (1) and (2). Finally, note that $\|\xi_r(u)\|_{H^1} \lesssim |u| h$ (as $\sin \theta_r$ is bounded away from zero). Thus:
\[
\|\tau_r\|_{H^1} \lesssim \|q\|_{L^2(\st(z))}, \quad \|\sigma\|_{H^1} \leq \sum_{r=1}^3 \|\tau_r\|_{H^1} \lesssim \|q\|_{L^2(\st(z))},
\]
completing the proof.
\end{proof}
For Type II singularities, we adopt a similar approach to that used for Type I singularities. However, the analysis of Type II singularities introduces additional complexity due to their distinct geometric configurations, which are classified into nondegenerate and degenerate cases. Each configuration requires careful consideration of the local mesh geometry. The following lemma addresses the nondegenerate Type II singularity case.

\begin{lemma}[Sufficient Conditions for Nondegenerate Type II Singularity]
\label{lem:suff-typeII}
Suppose $z \in \mathcal{V}_{II}$ is a nondegenerate vertex and $q \in Q_{k-1}^0$ satisfies \eqref{eq:lag-cond-typeII}. Then, there exists $\sigma \in \Sigma_k^0$ such that:
\begin{enumerate}
\item $\sigma$ is supported in $\st(z)$.
\item $\div \sigma|_T(z) = q|_T(z)$ for all $T \in \st(z)$.
\item $\div \sigma|_T(y) = 0$ for all $y \neq z$, $T \in \st(z)$.
\end{enumerate}
Moreover, the bound holds:
\[
\|\sigma\|_{H^1} \lesssim \|q\|_{L^2(\st(z))}.
\]
\end{lemma}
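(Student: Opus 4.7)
The approach parallels that of \Cref{lem:typeI-sufficient}, building $\sigma$ from the localized tensors $\xi_r(u)$ defined in \eqref{eq:xi-r}. However, Step~1 of \Cref{lem:nonsingular-sufficient} is no longer available here: for a non-degenerate Type~II vertex one has $t_{r+3}=-t_r$, so the two directions $t_{s-2}$ and $t_{s+1}$ used in $\varphi_{s,\pm}$ are collinear and fail to span $\mathbb R^2$; in particular, the denominator $t_{s-2}\cdot n_{s+1}$ in the coefficient $\alpha$ of \Cref{lem:nonsingular-sufficient} vanishes. Instead, the plan is to seek $\sigma=\sum_{r=1}^{6}\xi_r(\alpha_r)$ with six vector coefficients $\alpha_r\in\mathbb R^2$ chosen so that $\div\sigma|_{T_r}(z)=q_r(z)$ simultaneously for $r=1,\dots,6$. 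Using $\div\xi_r(u)|_{T_r}(z)=u$ and setting $L_r(u):=\div\xi_r(u)|_{T_{r+1}}(z)$, the matching reduces to the cyclic linear system
\begin{equation*}
\alpha_r+L_{r-1}(\alpha_{r-1})=q_r(z),\qquad r=1,\dots,6\quad(\mathrm{mod}\ 6),
\end{equation*}
where each $L_r:\mathbb R^2\to\mathbb R^2$ is computable from \eqref{eq:xi-r} in terms of the local angles and heights.

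Eliminating $\alpha_2,\dots,\alpha_6$ by back-substitution reduces the cyclic system to a single equation $(I-A)\alpha_1=\rho(q)$, where $A:=L_6L_5L_4L_3L_2L_1$ and $\rho(q)$ is an explicit alternating combination of $q_1(z),\dots,q_6(z)$. The main technical step, which I expect to be the principal obstacle, is to show that $A$ has eigenvalue $1$ with one-dimensional eigenspace, and to identify the corresponding one-dimensional left-cokernel with the linear functional appearing in \eqref{eq:lag-cond-typeII}. I would proceed by inserting the three-line-cross relations $t_{r+3}=-t_r$, $n_{r+3}=-n_r$, and $\theta_r+\theta_{r+1}+\theta_{r+2}=\pi$ (so that $\sin(\theta_r+\theta_{r+1}+\theta_{r+2})=0$) into the explicit entries of $L_r$, and then composing the six maps, expecting a telescoping cancellation that exhibits both the rank defect of $I-A$ and a left null vector whose components reproduce, up to rescaling by factors of $\sin\theta_r/h_r$, the coefficients $(-1)^r\sin\theta_r\,n_{r+1}$ in \eqref{eq:lag-cond-typeII}.

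Once the cokernel is identified, the hypothesis \eqref{eq:lag-cond-typeII} provides exactly the scalar compatibility condition required for the solvability of $(I-A)\alpha_1=\rho(q)$. Selecting $\alpha_1$ orthogonal to $\ker(I-A)$ yields $|\alpha_1|\lesssim\max_r|q_r(z)|$, and the sequential back-substitution $\alpha_r=q_r(z)-L_{r-1}(\alpha_{r-1})$ propagates the bound to every $\alpha_r$. The estimate $\|\sigma\|_{H^1}\lesssim\|q\|_{L^2(\st(z))}$ then follows from $\|\xi_r(u)\|_{H^1}\lesssim|u|h$ (compare \eqref{eq:div-xi-r-bound}) together with the local scaling $|q_r(z)|h\lesssim\|q\|_{L^2(\st(z))}$ and shape regularity. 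The remaining claims that $\sigma$ is supported in $\st(z)$ and that $\div\sigma(y)=0$ for every $y\neq z$ are inherited directly from the support and nodal-vanishing properties of each $\xi_r$ established in \Cref{lem:nonsingular-sufficient}.
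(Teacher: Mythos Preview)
Your approach is workable but takes a genuinely different route from the paper. The paper does not attempt to solve the cyclic system for the full $\xi_r(u)$ tensors; instead it decouples the problem into two scalar subproblems. In Step~1 it uses $\eta_s=\psi_s\psi_z^2\,t_st_s^T$ (one scalar coefficient per edge) and matches only the components $q_i(z)\cdot n_{i+1}$; because $\div\tau_s|_{T_s}\cdot n_{s+1}\sin\theta_s=\div\tau_s|_{T_{s+1}}\cdot n_{s+2}\sin\theta_{s+1}$, a simple telescoping argument matches five of the six normal components, and the sixth then follows \emph{directly} from the hypothesis \eqref{eq:lag-cond-typeII}. In Step~2 the residual in each $T_i$ is parallel to $t_{i+1}$, and the correction $\tilde\sigma_s=\psi_s\psi_z^2\,t_{s+1}t_{s+1}^T$ has the key property $\div\tilde\sigma_s|_{T_{s+1}}(z)=0$ (since $t_{s+1}\perp n_{s+1}$), so the six tangential corrections are completely decoupled and no linear system needs to be inverted.

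What the paper's decomposition buys is that the compatibility condition \eqref{eq:lag-cond-typeII} appears transparently as the closure condition of a telescoping sum, and the $H^1$ bound follows immediately because every coefficient is an explicit ratio with denominators bounded below by shape regularity alone. Your approach is more symmetric and conceptually uniform, but the price is the deferred computation of $A=L_6\cdots L_1$: you must verify both that $\operatorname{rank}(I-A)=1$ (not $0$ or $2$) and that the nonzero singular value of $I-A$ is bounded below by a shape-regularity constant, since this constant controls $|\alpha_1|$ through the pseudoinverse. These facts are plausible and consistent with the rank-deficiency count in \Cref{thm:overall}, but note that your $12$-parameter family $\{\xi_r(\alpha_r)\}_{r=1}^6$ is a proper subfamily of the available tensors (it omits $\psi_r\psi_z^2\,t_{r-1}t_{r-1}^T$), so the rank claim is not an automatic consequence of the one-dimensional cokernel established by \Cref{lem:necessary-type-II} and really does require the explicit computation you outline.
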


\begin{proof}
The construction is decomposed into two steps. First, we prove that there exists $\sigma \in \Sigma_k^0$ such that in each $T_i$, it holds that 
\begin{equation}
    (\div \sigma|_{T_i}(z) - q_i(z)) \cdot n_{i+1} = 0.
\end{equation}

Consider $\eta_s = \psi_s\psi_z^2 t_s t_s^T$. Note that $\eta_s$ is supported in $T_s \cup T_{s+1}$. In $T_s$, we have
$$ \div \eta_s|_{T_s} \cdot n_{s+1} = \frac{\sin \theta_s \sin \theta_{s+1}}{h_s} = \frac{\sin \theta_{s+1}}{|\overline{zy_s}|}, $$
while in $T_{s+1}$, 
$$ \div \eta_s|_{T_{s+1}} \cdot n_{s+2} = \frac{\sin \theta_s}{|\overline{zy_s}|}, $$
Therefore, by setting $\displaystyle \tau_s = \frac{|\overline{zy_s}|}{\sin \theta_s \sin \theta_{s+1}} \eta_s$, we have 

$$\div \tau_s|_{T_s} \cdot n_{s+1} \sin \theta_s = \div \tau_s|_{T_{s+1}} \cdot n_{s+2} \sin \theta_{s+1} = 1.$$
This is illustrated in \Cref{fig:first-step-nondegenerate}. Here the number inside each $T_s$ records the value $\div \tau|_{T_{s}} \cdot n_{s+1} \sin \theta_{s}$.

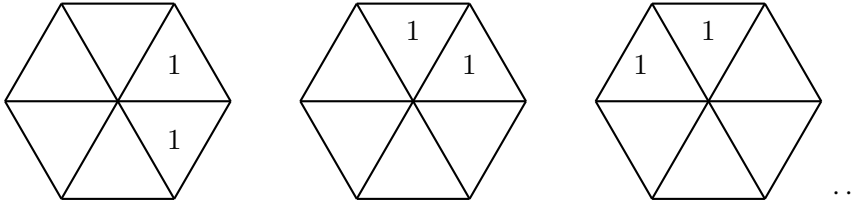
\begin{figure}[htbp]
				\begin{tikzpicture}[scale=1.5]
\draw[thick] (0,0) -- (1,0);
\draw[thick] (1/2,1.732/2) -- (1,0);
\draw[thick] (0,0) -- (1/2,1.732/2);
\draw[thick] (0,0) -- (1/2,-1.732/2);
\draw[thick] (1,0) -- (1/2,-1.732/2);
\draw[thick] (1,0) -- (3/2,1.732/2);
\draw[thick] (2,0) -- (3/2,1.732/2);
\draw[thick] (2,0) -- (1,0);
\draw[thick] (1/2,1.732/2) -- (3/2,1.732/2);
\draw[thick] (1/2,-1.732/2) -- (3/2,-1.732/2);
\draw[thick] (1,0) -- (3/2,-1.732/2);
\draw[thick] (2,0) -- (3/2,-1.732/2);
\node [below=] at (3/2,-1.732/2+0.7) {$1$};
\node [below=] at (3/2,+0.5) {$1$};

		\end{tikzpicture}
		\qquad 
				\begin{tikzpicture}[scale=1.5]
\draw[thick] (0,0) -- (1,0);
\draw[thick] (1/2,1.732/2) -- (1,0);
\draw[thick] (0,0) -- (1/2,1.732/2);
\draw[thick] (0,0) -- (1/2,-1.732/2);
\draw[thick] (1,0) -- (1/2,-1.732/2);
\draw[thick] (1,0) -- (3/2,1.732/2);
\draw[thick] (2,0) -- (3/2,1.732/2);
\draw[thick] (2,0) -- (1,0);
\draw[thick] (1/2,1.732/2) -- (3/2,1.732/2);
\draw[thick] (1/2,-1.732/2) -- (3/2,-1.732/2);
\draw[thick] (1,0) -- (3/2,-1.732/2);
\draw[thick] (2,0) -- (3/2,-1.732/2);

\node [below=] at (3/2,+0.5) {$1$};
\node [below=] at (1,0.8) {$1$};

		\end{tikzpicture}
\qquad 
				\begin{tikzpicture}[scale=1.5]
\draw[thick] (0,0) -- (1,0);
\draw[thick] (1/2,1.732/2) -- (1,0);
\draw[thick] (0,0) -- (1/2,1.732/2);
\draw[thick] (0,0) -- (1/2,-1.732/2);
\draw[thick] (1,0) -- (1/2,-1.732/2);
\draw[thick] (1,0) -- (3/2,1.732/2);
\draw[thick] (2,0) -- (3/2,1.732/2);
\draw[thick] (2,0) -- (1,0);
\draw[thick] (1/2,1.732/2) -- (3/2,1.732/2);
\draw[thick] (1/2,-1.732/2) -- (3/2,-1.732/2);
\draw[thick] (1,0) -- (3/2,-1.732/2);
\draw[thick] (2,0) -- (3/2,-1.732/2);
\node [below=] at (1,0.8) {$1$};
\node [below=] at (0.4,0.5) {$1$};
		\end{tikzpicture}
        $\cdots$
        
\caption{An illustration supporting the proof in the first step.}
\label{fig:first-step-nondegenerate}
	\end{figure}

% \begin{figure}
% 		\begin{tikzpicture}[scale=1.5]
% \draw[dashed] (0,0) -- (1,0);
% \draw[thick] (1/2,1.732/2) -- (1,0);
% %\draw[thick] (1/2,1.732/2) -- (1/2,-1.732/2);
% %\draw[thick] (0,0) -- (1/2,-1.732/2);
% \draw[thick] (1,0) -- (1/2,-1.732/2);
% \draw[thick] (1,0) -- (3/2,1.732/2);
% %\draw[thick] (2,0) -- (3/2,1.732/2);
% \draw[thick] (2,0) -- (1,0);
% %\draw[thick] (1/2,1.732/2) -- (3/2,1.732/2);
% %\draw[thick] (1/2,-1.732/2) -- (3/2,-1.732/2);
% \draw[thick] (1,0) -- (3/2,-1.732/2);
% %\draw[thick] (2,0) -- (3/2,-1.732/2);
% \node [below=] at (3/2,-1.732/2+0.7) {$T_1$};
% \node [below=] at (3/2,+0.5) {$T_2$};
% \node [below=] at (1,0.8) {$T_3$};
% \node [below=] at (0.2,0.2) {$T_{4,5}$};
% \node [below=] at (0.2,0.2) {$T_{4,5}$};
% \node [below=] at (1.,-0.5) {$T_{6}$};
% 		\end{tikzpicture}
% 		\qquad 
% \end{figure}

Define 
$$\sigma_1 = (q_1(z) \cdot n_{2}) \tau_1,$$
$$\sigma_r = (q_r(z) \cdot n_{r+1} - \div \sigma_{r-1}|_{T_{r}} \cdot n_{r+2}) \tau_{r}, \text{ for } r = 2,3,4,5.$$

Consider $\sigma = \sigma_1+\cdots+ \sigma_5.$ Then it holds that for $i = 1,2,3,4,5$,
 $$(\div \sigma|_{T_i}(z) - q_i(z)) \cdot n_{i+1} = 0.$$
 Thanks for \eqref{eq:lag-cond-typeII}, the above equation also holds for $i = 6$. Thus, we finish the first step of construction. 

For the second step, we construct $\tilde{\sigma}$ such that 
 $(\div \sigma|_{T_i}(z) + \div \tilde\sigma|_{T_i}(z) -  q_i(z))  = 0.$
Thanks to the first step, we already have 
$\div \sigma|_{T_i}(z) - q_i(z) = \alpha_i t_{i+1}$ for some constant $\alpha_i$. 
Now consider $\tilde \sigma_s = \psi_s \psi_z^2 t_{s+1} t_{s+1}^T$, which is supported in $T_s \cup T_{s+1}$. Then, direct computation yields  $\div \tilde \sigma_s|_{T_{s+1}} = 0$, while $\div \tilde \sigma_s|_{T_{s}} = \frac{\sin \theta_{s-1}}{h_s} t_{s+1}$. Therefore, 
taking $\tilde \sigma = \sum_{i = 1}^6 \frac{\alpha_s h_s}{\sin \theta_{s-1}}\tilde \sigma_s $ we finish the proof for the nondegenerate case.
\end{proof}

The degenerate cases are similar, and the complete proof are displayed in \Cref{sec:appendix}. 

Combining all the above results, we have the following theorem.
\begin{theorem}
\label{thm:lagrange}
It holds that 
    \begin{equation}\begin{split} 
    \div \Sigma_k^0 = \Big\{   &q \in Q_{k-1}^{-1}  :\\ &  J_z : \nabla q_{1}(v)+  J_z : \nabla  q_{3}(v) =  J_z : \nabla  q_{2}(v)+  J_z : \nabla  q_{4}(v),\,\, \forall v \in \mathcal V_{I} \\ &   q_{1}(v)+   q_{3}(v) =  q_{2}(v)+  q_{4}(v),\,\, \forall v \in \mathcal V_{I}  \\
   &  \sum_{i = 1}^6 (-1)^i \sin\theta_i q_i(v) \cdot n_{i+1} = 0, \,\, \forall v \in \mathcal V_{II}.\Big\}
    \end{split}\end{equation}
    Moreover, if for all $x \in \mathcal V \setminus \mathcal V_{I} \setminus \mathcal V_{II}$, $\Theta_{II}(z)$ has a uniform lower bound $\theta_{II} > 0$, and $\mathcal T$ is shape regular. Then for each $ q \in \div \Sigma_k^0$, there exists $ \sigma \in \Sigma_k^0$ such that $\div \sigma = q$ and $\|\sigma\|_{H^1} \lesssim \theta_{II}^{-2} \|q\|_{L^2}$.
\end{theorem}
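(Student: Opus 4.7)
The plan is to prove this in two stages that reduce the general case to the Hermite pair handled by \Cref{thm:hermite}. For the necessity direction of the characterization, the three conditions are immediate from the necessary-condition lemmas already proved: \Cref{lem:typeI-singularity} supplies the Type~I value and gradient identities at every $v\in\mathcal V_I$, while \Cref{lem:necessary-type-II} supplies the Type~II identity at every $v\in\mathcal V_{II}$. This direction requires nothing beyond quoting those lemmas.

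For the sufficiency direction, given $q$ in the claimed set, I would first assemble a vertex-by-vertex correction $\sigma^{(1)}\in\Sigma_k^0$. At each vertex $v\in\mathcal V$, I would invoke the appropriate local construction: \Cref{lem:nonsingular-sufficient} if $v\notin\mathcal V_I\cup\mathcal V_{II}$; \Cref{lem:typeI-sufficient} if $v\in\mathcal V_I$, whose hypothesis \eqref{eq:lag-cond-typeI} holds by assumption; \Cref{lem:suff-typeII} if $v\in\mathcal V_{II}$ is nondegenerate, whose hypothesis \eqref{eq:lag-cond-typeII} likewise holds; and the parallel degenerate constructions in \Cref{sec:appendix} for the three degenerate Type~II configurations. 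Each produces $\sigma_v$ supported in $\st(v)$ with $\div\sigma_v|_T(v)=q|_T(v)$ and $\div\sigma_v|_T(y)=0$ for $y\neq v$. Setting $\sigma^{(1)}:=\sum_v\sigma_v$, the residual $q^{(1)}:=q-\div\sigma^{(1)}$ vanishes at every vertex in the per-element sense, so in particular $q^{(1)}\in Q_{k-1}^0$.

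Stage~2 is to lift $q^{(1)}$ through \Cref{thm:hermite}, which requires verifying the Type~I gradient compatibility at every $v\in\mathcal V_I$. Here the key observation is that \Cref{lem:necessary-type-I}, applied to the piecewise-polynomial field $\sigma^{(1)}\in\Sigma_k^0\subset C^0(\Omega;\mathbb S^2)$, forces the alternating-sum identity to hold automatically for $\nabla\div\sigma^{(1)}$; combined with the assumed gradient condition on $q$, the identity passes to $q^{(1)}$ for free, with no delicate hand-tuning. Then \Cref{thm:hermite} supplies $\sigma^{(2)}\in\Sigma_k^1\subset\Sigma_k^0$ with $\div\sigma^{(2)}=q^{(1)}$, and $\sigma:=\sigma^{(1)}+\sigma^{(2)}$ is the desired preimage with $\div\sigma=q$. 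The stability bound is then assembled by summing the local $H^1$-estimates $\|\sigma_v\|_{H^1}\lesssim\Theta_{II}(v)^{-2}\|q\|_{L^2(\st(v))}$ over vertices with bounded overlap, then invoking the stability half of \Cref{thm:hermite} for $\sigma^{(2)}$; because $\Theta_{II}\le\Theta_I$ pointwise, the single uniform lower bound $\theta_{II}$ controls both stages and yields the advertised $\theta_{II}^{-2}$ prefactor.

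The main obstacle I anticipate is not conceptual but bookkeeping: confirming that each of the degenerate Type~II constructions in \Cref{sec:appendix} produces exactly the same conclusions (support in $\st(v)$, killing of all vertex values, and the same $H^1$-bound) as the nondegenerate \Cref{lem:suff-typeII}, so that Stage~1 assembles cleanly into a single uniform estimate. A secondary subtlety is that the propagation of the Type~I gradient condition into Stage~2 relies on $\sigma^{(1)}$ being only $C^0$ at vertices (not $C^1$), so I would be careful to confirm that Lemma~\ref{lem:necessary-type-I} is invoked with the minimal regularity actually available and not with more.
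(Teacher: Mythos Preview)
Your proposal is correct and is exactly the approach the paper intends: the paper's own proof reads in full ``The proof is similar to \Cref{thm:hermite},'' and what you have written is precisely that reduction carried out---local vertex corrections via \Cref{lem:nonsingular-sufficient}, \Cref{lem:typeI-sufficient}, \Cref{lem:suff-typeII} and the appendix to land in $Q_{k-1}^0$, followed by invoking \Cref{thm:hermite}. Your observation that the Type~I gradient identity passes automatically to $q^{(1)}$ via \Cref{lem:necessary-type-I} (since $\sigma^{(1)}\in\Sigma_k^0$ is continuous and piecewise smooth) is the right way to close Stage~2, and your remark that $\Theta_{II}\le\Theta_I$ together with shape regularity at Type~II vertices handles the stability constants is also correct.
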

\begin{proof} 
The proof is similar to \Cref{thm:hermite}.
\end{proof}	

\subsection{Error Estimates}
\label{sec:error}

Based on the inf-sup stability conditions, the following error estimates are standard. Note that the constant for inf-sup condition and coercivity are independent of both parameters $\lambda$ and $\iota$. Therefore, the error estimates are also parameter robust \eqref{lsg-discrete}. 
\begin{theorem}
    Let $(\sigma,u)\in \Sigma\times Q$ be the exact solution of problem \eqref{lsg-weak}. 
    Let $\Sigma_h \times Q_h$ be one of the following pairs:

    \begin{enumerate}
        \item $\Sigma_k^2 \times Q_k^1$, provided $k \ge 7$;
        \item $\Sigma_k^1 \times Q_k^0$, provided $k \ge 7$ and $\Theta_I(z) > \theta_I$ for all interior vertices and $\theta_I > 0$;
        \item $\Sigma_k^0 \times Q_k^{-1}$, provided $k \ge 7$ and $\Theta_{I}(z) > \theta_{I}$ and $\Theta_{II}(z) > \theta_{II}$ for all interior vertices and $\theta_I, \theta_{II} > 0$.
    \end{enumerate}
    Let $(\sigma_h,u_h)\in \Sigma_h \times Q_h$ be the finite element solution of \eqref{lsg-discrete}.
    Then 
    \begin{equation}
    \|\sigma-\sigma_h\|_{\iota}+\|u-u_h\|_{L^2}\lesssim \inf_{\tau_h\in \Sigma_{h},~v_h\in Q_{h}}(\|\sigma-\tau_h\|_{\iota}+\|u-u_h\|_{L^2}).
    \end{equation}
Here the hidden constant is independent with $\iota$ and $\lambda$.
\end{theorem}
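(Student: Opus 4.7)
The plan is to apply the standard Brezzi--Babu\v{s}ka framework for mixed finite element methods, where the only nontrivial input is the parameter-robust discrete stability that was established in \Cref{thm:overall} together with the continuous coercivity results of \Cref{sec:continuous}. The three ingredients I need to verify uniformly in $\iota \in (0,1]$ and $\lambda \in [1,\infty)$ are: (i) boundedness of $a_\iota$ with respect to $\|\cdot\|_\iota$ and of $b$ with respect to $\|\cdot\|_\iota \times \|\cdot\|_{L^2}$, (ii) a discrete inf-sup condition for $b$, and (iii) coercivity of $a_\iota$ on the discrete kernel $Z_h$.

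For (i), the boundedness of $b$ is immediate from $|(\div \tau, v)| \le \|\div \tau\|_{L^2}\|v\|_{L^2} \le \|\tau\|_\iota \|v\|_{L^2}$. For $a_\iota$, I would use the uniform bound $|\mathcal A \sigma : \tau| \lesssim \mu^{-1} |\sigma||\tau|$, which follows because the $\lambda$-dependent trace correction is absorbed by $|\mathrm{tr}(\sigma)| \le \sqrt 2 |\sigma|$, yielding $a_\iota(\sigma,\tau) \lesssim \|\sigma\|_\iota \|\tau\|_\iota$ with $\lambda$-independent constant. For (ii), given $v_h \in Q_h$, one of \Cref{thm:c2,thm:hermite,thm:lagrange} (according to the chosen pair) produces $\tau_h \in \Sigma_h$ satisfying $\div \tau_h = v_h$ and $\|\tau_h\|_{H^1} \lesssim \|v_h\|_{L^2}$, where the hidden constant depends on $\theta_I$ or $\theta_{II}$ but is independent of $\iota$ and $\lambda$. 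Since $\iota \le 1$, $\|\tau_h\|_\iota = \iota|\tau_h|_{H^1} + \|\div \tau_h\|_{L^2} + \|\tau_h\|_{L^2} \lesssim \|\tau_h\|_{H^1} \lesssim \|v_h\|_{L^2}$, giving the discrete inf-sup with a parameter-robust constant.

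For (iii), the closedness $\div \Sigma_h \subseteq Q_h$ (in fact equality) ensures that the discrete kernel coincides with $Z_h = \{\tau_h \in \Sigma_h : \div \tau_h = 0\} \subseteq Z$. \Cref{coerecivity-lemma1} and the proposition following it then give $a_\iota(\tau_h,\tau_h) \gtrsim \|\tau_h\|_\iota^2$ for $\tau_h \in Z_h$ with $\int_\Omega \mathrm{tr}(\tau_h) = 0$. The single ``bad'' direction $\mathrm{span}\{I\} \subset Z_h$ (on which $a_\iota$ scales like $1/(\mu+\lambda)$) is handled exactly as in the continuous wellposedness proof: split $\tau_h = \tau_h^0 + c_h I$, apply the coercivity to $\tau_h^0$, and recover the trace-mean component $c_h$ through the discrete inf-sup of $b$ by testing against a displacement that pairs with $I$. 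This mirrors the continuous splitting into $Z^\perp$ and $Z$ and transfers to the discrete level because $I \in \Sigma_h$ and the inf-sup holds uniformly.

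With (i)--(iii) in hand, Brezzi's theorem gives existence, uniqueness, and a parameter-robust a priori bound for $(\sigma_h, u_h)$. The quasi-optimal estimate then follows by the Galerkin orthogonality
\[
a_\iota(\sigma - \sigma_h, \tau_h) + b(\tau_h, u - u_h) = 0, \qquad b(\sigma - \sigma_h, v_h) = 0,
\]
a Strang-style manipulation using (i)--(iii) applied to arbitrary $\tilde\sigma_h \in \Sigma_h, \tilde u_h \in Q_h$, and a triangle inequality. The main obstacle in this analysis is the $\lambda$-uniformity: classical coercivity on $Z_h$ fails for the constant-trace direction, and the fix is the splitting argument above combined with the inf-sup for $b$; this is precisely why the Hellinger--Reissner-type mixed formulation is used in the first place, and why the stability theorems proved earlier produce a bounded right inverse in $\|\cdot\|_\iota$ with $\lambda$-independent constant. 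All remaining arguments are routine and independent of $\iota$ and $\lambda$, so the final estimate inherits the desired parameter robustness.
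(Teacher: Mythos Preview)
Your proposal is essentially correct and matches the paper's approach, which simply declares the estimate ``standard'' given the discrete inf-sup and coercivity established earlier. One small correction: you cannot recover the trace-mean component $c_h$ via the inf-sup of $b$, because $\div I = 0$ and hence $b(I,v_h)=0$ for every $v_h$. The standard fix is instead to test the first discrete equation against $\tau_h = I \in \Sigma_h$, which yields $(\mathcal A\sigma_h,I)=0$, i.e.\ $\int_\Omega\mathrm{tr}(\sigma_h)=0$; the same identity holds for the exact $\sigma$. One then works in the constrained spaces $\{\tau:\int_\Omega\mathrm{tr}(\tau)=0\}$, on which the $\lambda$-robust coercivity of \eqref{coercivity-cont.} applies directly to the discrete kernel $Z_h\subseteq Z$, and the inf-sup is preserved because any $\tau_h$ with $\div\tau_h=v_h$ can be corrected by a multiple of $I$ without changing its divergence or its $\|\cdot\|_\iota$-bound. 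With this adjustment, the rest of your outline goes through verbatim.
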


As a corollary, suppose that $\sigma\in H^{k+1}(\Omega;\mathbb{S}^2)$ and $u\in H^k(\Omega;\mathbb R^2)$, then we have
        \begin{equation}
    \|\sigma-\sigma_h\|_{\iota}+\|u-u_h\|_{L^2}\lesssim h^k(\|\sigma\|_{H^{k+1}}+\|u\|_{H^{k}}).
    \end{equation}
{Note that the norm of $\sigma$ and $u$ might depend on $\iota$.}

\subsection{The smoothed stress complex with spline spaces} Denote by the superspline space $U_{k+2}^3$ and $U_{k+2}^2$ as 
$$U_{k+2}^r: =\{ u \in C^2(\Omega): u|_T \in P_{k+2}, u \in C^r(\mathcal V)\}.$$
Therefore, the following space is closed. 
\begin{equation*}
\label{eq:complex-spline}
0 ~ {\hookrightarrow}~ U^{r+2}_{k+2}(\mathcal T) \xrightarrow{\curl\curl^T} \Sigma_{k}^r (\mathcal T) \xrightarrow{\div} Q_{k-1}^{r-1}(\mathcal T) \to 0.
\end{equation*}
When $r = 2$, the space $U_{k+2}^4$ admits a finite element construction. It can be readily checked with the help of bubble complexes, that the cohomology of \eqref{eq:complex-spline} is isomorphic to $\mathbf{P}_1 \otimes \mathcal H_{dR}(\Omega)$, see \cite{ChristiansenHu2023, HuLiangLin23, HuLinZhang25}. However, when $r = 0$ or $r = 1$, the space $U_{k+2}^2$ (or $U_{k+2}^3$) does not admit a finite element construction \cite{HuLinWuYuan25}, which leads to the difficulty identifying the cohomology. The exception is when $\Omega$ is homeomorphic to a disk. In that case, an analysis about the scalar potentials can make sure that the following sequence is exact.
\begin{equation}\label{eq:spline-complex}
\mathbf{P}_1 ~ {\hookrightarrow}~ U^{r+2}_{k+2}(\mathcal T) \xrightarrow{\curl\curl^T} \Sigma_{k}^r (\mathcal T) \xrightarrow{\div} \div \Sigma_{k}^{r}(\mathcal T) \to 0.
\end{equation}
Similar exactness argument can be established for the Scott-Vogelius elements with applications, cf. \cite{GuzmanScott18}.

\begin{theorem}
    When $\Omega$ is topologically trivial, then \eqref{eq:spline-complex} is exact for $r= 0, k \ge 5$, $r = 1, k \ge 7$ and $r = 2, k \ge 7$.
\end{theorem}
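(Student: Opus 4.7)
The plan is to verify the three exactness conditions of \eqref{eq:spline-complex} separately. Injectivity of $\mathbf{P}_1 \hookrightarrow U^{r+2}_{k+2}(\mathcal T)$ and the surjectivity $\div(\Sigma_k^r(\mathcal T)) = \div \Sigma_k^r(\mathcal T)$ are immediate, and the identification $\ker \curl\curl^T = \mathbf{P}_1$ reduces at once to the same identity for smooth scalar fields on a connected domain. The real content lies in the middle exactness: given $\sigma \in \Sigma_k^r(\mathcal T)$ with $\div \sigma = 0$, I must produce $\phi \in U^{r+2}_{k+2}(\mathcal T)$ with $\curl\curl^T \phi = \sigma$.

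First I would invoke the exactness of the continuous smoothed stress complex \eqref{eq:complex-sobolev}, which holds because $\Omega$ is topologically trivial. This yields some $\phi \in H^3(\Omega)$ satisfying $\curl\curl^T \phi = \sigma$, unique up to an additive $\mathbf P_1$ term. The remaining task is to upgrade the regularity of $\phi$ so that it lies in $U^{r+2}_{k+2}(\mathcal T)$, which amounts to two independent checks: piecewise polynomial structure on each triangle, and the vertex continuity $\phi \in C^{r+2}(\mathcal V)$.

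For the piecewise polynomial structure, I would verify by a dimension count that the local complex
\begin{equation*}
\mathbf{P}_1 \hookrightarrow P_{k+2}(T) \xrightarrow{\curl\curl^T} \{\tau \in P_k(T;\mathbb{S}^2) : \div\tau = 0\} \to 0
\end{equation*}
is exact on every triangle $T$: both $\dim P_{k+2}(T) - 3$ and $\dim P_k(T;\mathbb{S}^2) - \dim P_{k-1}(T;\mathbb{R}^2)$ equal $\tfrac12 (k+1)(k+6)$, and $\div : P_k(T;\mathbb S^2) \to P_{k-1}(T;\mathbb R^2)$ is already surjective locally. Applied to $\sigma|_T$, this furnishes $\phi_T \in P_{k+2}(T)$ with $\curl\curl^T \phi_T = \sigma|_T$, and since $\phi|_T - \phi_T$ has vanishing Hessian on $T$, it lies in $\mathbf P_1$; hence $\phi|_T \in P_{k+2}(T)$. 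For the vertex continuity, the pointwise identity
\begin{equation*}
\curl\curl^T \phi = \begin{pmatrix} \partial_{yy}\phi & -\partial_{xy}\phi \\ -\partial_{xy}\phi & \partial_{xx}\phi \end{pmatrix}
\end{equation*}
shows that every second partial of $\phi$ is (up to sign) an entry of $\sigma$. Hence any $s$-th partial of $\phi$ with $3 \le s \le r+2$ is an $(s-2)$-th partial of an entry of $\sigma$, which is continuous at each vertex because $\sigma \in C^r(\mathcal V)$ and $s-2 \le r$. Combined with $\phi \in H^3(\Omega) \subset C^2(\Omega)$, which handles the derivatives of order $0, 1, 2$ at vertices automatically, this gives $\phi \in C^{r+2}(\mathcal V)$ and therefore $\phi \in U^{r+2}_{k+2}(\mathcal T)$.

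The hard part, I expect, is that for $r = 0, 1$ the space $U^{r+2}_{k+2}(\mathcal T)$ is a genuine spline space admitting no local finite element description \cite{HuLinWuYuan25}, so one cannot assemble $\phi$ element-by-element via nodal degrees of freedom. The route above circumvents this by passing through the global $H^3$ potential and only invoking the polynomial complex on each element for the piecewise structure. The polynomial degree thresholds $k \ge 5$ for $r = 0$ and $k \ge 7$ for $r = 1, 2$ should enter in the same spirit as the minimal degree requirements in the Scott--Vogelius analysis of \cite{GuzmanScott18}: they ensure that the local scalar-potential space at each vertex patch has enough freedom to encode the vertex compatibility conditions of \Cref{thm:hermite} and \Cref{thm:lagrange}, while sharpening them would follow the same dimension-count template used above.
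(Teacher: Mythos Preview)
Your argument is correct and takes a genuinely different route from the paper. For the spline cases $r=0,1$, the paper proves the middle exactness indirectly by a dimension count: it invokes Alfeld--Schumaker's formula \cite{Alfeld1987} (for $\dim U_{k+2}^2$) and Toshniwal's formula \cite{toshniwal2023} (for $\dim U_{k+2}^3$), combines these with the rank-deficiency results of \Cref{thm:hermite} and \Cref{thm:lagrange} for $\dim\div\Sigma_k^r$, and checks that the alternating sum equals $3 = \dim\mathbf P_1$. Your construction instead lifts the $H^3$ potential directly and verifies its spline regularity by reading off the vertex derivatives from the Hessian identity. This is more elementary --- it bypasses the spline dimension theory entirely --- and in fact your argument nowhere uses the degree thresholds $k\ge 5$ or $k\ge 7$; those enter the paper's proof only through the validity ranges of the dimension formulas, so your last paragraph speculating on where they are needed is unnecessary. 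The paper's route, in exchange, yields the explicit spline dimensions and cross-validates the cokernel counts of \Cref{thm:overall}.

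One correction: the inclusion $H^3(\Omega)\subset C^2(\Omega)$ fails in two dimensions (the Sobolev exponent is exactly critical). What you should say instead is that, having already established that $\phi$ is piecewise polynomial, membership in $H^3$ forces $\nabla^2\phi$ to have no jump across interior edges, whence $\phi\in C^2(\Omega)$ globally. This is the mechanism the paper relies on in the proof of \Cref{prop:complex-fe}; with that fix your proof is complete.
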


\begin{proof}
It can be checked that if $\sigma  \in \Sigma_k^r(\mathcal T)$ and $\div \sigma = 0$, then there exists $\psi \in U_{k+2}^{r+2}(\mathcal T)$ such that $\curl \curl^T \psi = \sigma.$ 
{
It suffices to check the dimension count. By Alfeld-Schumaker's formula \cite{Alfeld1987}, it holds that for $k \ge 5$, we have 
\begin{equation}
\begin{split}
\dim U_{k+2}^2(\mathcal T) = &     \binom{k+4}{2} + \binom{k+1}{2} |\mathcal E^{\circ}| - \big[ \binom{k+4}{2} - 6\big] |\mathcal V^{\circ}| + 3|\mathcal V_{I}| + |\mathcal V_{II}| \\
 = &  6|\mathcal V| + (3k-6)|\mathcal E| +\binom{k+1}{2} |\mathcal F| + 3|\mathcal V_{I}| + |\mathcal V_{II}|.
\end{split}\end{equation}
Here we use $|\mathcal V| - |\mathcal E| + |\mathcal F| = 1$, $|\mathcal V^{\partial}| = |\mathcal E^{\partial}|$, and $3|\mathcal F| = |\mathcal E| + |\mathcal E^{\circ}|$.
Therefore, we have 
\begin{equation}
\begin{split}&  \dim U_{k+2}^2(\mathcal T) - \dim \Sigma_k^0(\mathcal T) + \dim \div \Sigma_{k}^{0}(\mathcal T) \\
 = & \dim U_{k+2}^2(\mathcal T) - \dim \Sigma_k^0(\mathcal T) + \dim Q_{k-1}^{-1}(\mathcal T)  + 3|\mathcal V_{I}| + |\mathcal V_{II}| \\ 
= & 3 (|\mathcal V| - |\mathcal E| + |\mathcal F|) = 3.
\end{split}
\end{equation}

There are also some results about the dimension formula in the superspline space $U_{k+2}^3(\mathcal T)$. In \cite{toshniwal2023} we have 
\begin{equation}
\begin{split} \dim U_{k+2}^3 = &\binom{k+4}{2} + [(k-1)^2 - \binom{k-2}{2}] |\mathcal E^{\circ}| - [ \binom{k+4}{2} - 10] |\mathcal V^{\circ}| + |\mathcal V_{I}|\\
= & 10 |\mathcal V| + (3k-12) |\mathcal E| + [\binom{k-5}{2} - 3] |\mathcal F| + |\mathcal V_{I}|
\end{split}
\end{equation} 
for $k$ sufficiently large. Therefore, we have:
\begin{equation}\dim  U_{k+2}^3(\mathcal T) - \dim \Sigma_k^1(\mathcal T) + \dim \div \Sigma_k^1(\mathcal T) = 3.
\end{equation}
On the other side, the above calculation shows that the dimension formula of $\dim U_{k+2}^3(\mathcal T)$ is valid when $k \ge 7$.

}
\end{proof}

The general cohomology and machinery of the complexes with splines are beyond the scope of this paper.

\section{Numerical Experiments}
\label{sec:numerics}

In this section, we provide some numerical examples for verification and further investigation. We start from an exact solution of $(\sigma_0, u_0)$, corresponds to the case when $\iota = 0$. The right-hand side will be constructed by $f = \div \sigma_0$.

For practical concerns, we pay more attention on the Hermite case $\Sigma_k^1\times Q_{k-1}^0$ and Lagrange case $\Sigma_k^0\times Q_{k-1}^{-1}$ for $k = 4$, whose polynomial degree is lower than assumed in the previous theorems.

\subsection{Verification}

% Due to the lack of the exact solution of linear stress gradient equation.
We take an exact solution $u_0$ for problem \eqref{eq:iota0} with sufficiently boundary vanishing conditions. Specifically, we choose $u_{0} = \curl(\sin^3\pi x\sin^3\pi y)+(1/\lambda)x^3(1-x)^3y^3(1-y)^3.$ 
Note that $u_0\in H_0^2([0,1]^2;\mathbb{R}^2)$, therefore for any divergence-free matrix-valued functions $\tau$, we have
$$
(\nabla\mathcal{A}\sigma(u_0),\nabla\tau)=(\nabla\mathrm{symgrad}(u_0),\nabla\tau)=(u_0,\div   \Delta \tau)=0.
$$
By \eqref{equation:bdlayer-diff}, we have $\sigma_{\iota}=\sigma_0:=\mathcal{A}^{-1}\mathrm{symgrad}(u_0).$ 
% This means, in this “degenerate” case, $\sigma_{\iota}$ is the exact solution of problem \eqref{lsg-weak}.
However, the solution of $u_{\iota}$ do not have an analytic expression. Therefore, we use computational results in $\Sigma_5^2 \times Q_{4}^1$ and \texttt{hsize}$=1/128$ as reference solutions. Especially when $\iota = 0$, we compute the error by the exact solution (available only when $\iota = 0$). We fix $\lambda = 10^5$, $\mu = 0.3$.

To resolve the singularities caused by geometry, we use some specific refinements in mesh generation. 
For the pair $\Sigma_k^1\times Q_{k-1}^0$, we choose the Hsieh-Clough-Tocher split; while for $\Sigma_k^0\times Q_{k-1}^{-1}$,we choose Morgan-Scott split mesh, respectively, cf. \Cref{fig:mesh}. Hereafter, \texttt{hsize} refers to the mesh size before refinement. As a consequence, the actual size of the mesh would be smaller than \texttt{hsize}.

% As suggested by \Cref{prop:bdlayer-type},  

% However, usually there no restriction for $\partial_n\sigma_0$ on boundary for linear elasticity problems. So we design two experiments for $\partial_n\sigma_0|_{\partial}=0$ and $\partial_n\sigma_0|_{\partial}\neq 0$, respectively. 

\begin{figure}[htbp]
    \centering
    \includegraphics[width=0.4\linewidth]{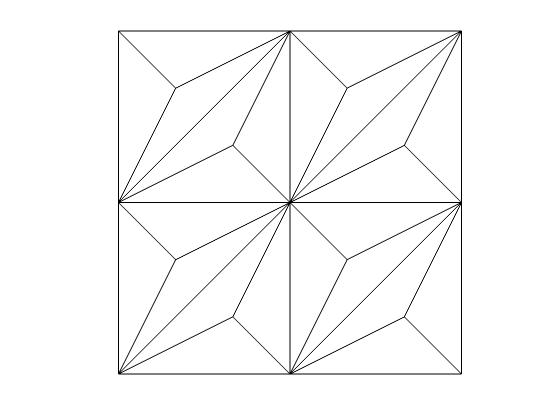}
    \includegraphics[width=0.4\linewidth]{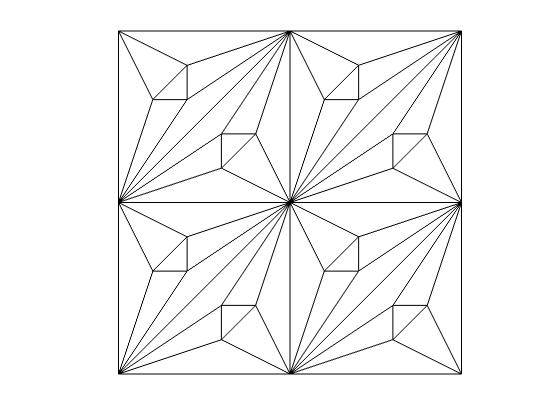}
    \caption{Meshes refined by Hsieh-Clough-Tocher split and Morgan-Scott split}
    \label{fig:mesh}
\end{figure}

The error between the reference solution and the numerical solutions is denoted as $\|E_{\sigma}\|_{\iota}$ and $\|E_{u}\|_{L^2}$. 
The numerical results of $\Sigma_4^0\times Q_3^{-1}$ and $\Sigma_4^1\times Q_3^{0}$ is listed in \Cref{tab:0,tab:1}.

\begin{center}
\begin{table}
        \caption{Numerical experiments of $\Sigma_4^1\times Q_3^{0}$ on Hsich-Clough-Tocher Split }
    \begin{tabular}{cccccccc}
    \hline
         hsize & 1/2 & 1/4&1/6&1/8&1/10&1/12&1/14 \\
         \hline
    &&&$\iota=1$&&&&\\
    \hline
   $\|E_{\sigma}\|_{\iota}$    & 2.76e+01 & 3.68e+00 &  1.18e+00 & 4.85e-01 & 2.30e-01 &   1.21e-01 &   6.93e-02 \\
     order  &    --    &     2.91 &      2.80 &     3.09 &     3.34 &       3.51 &       3.63  \\
    $\|E_{u}\|_{L^2}$   & 9.99e+01 & 1.03e+01 & 4.45e+00 & 2.47e+00 & 1.43e+00 & 8.58e-01 & 5.34e-01  \\
     order   &    --    & 3.27     & 2.08     & 2.04     & 2.44     & 2.81     & 3.08       \\
     \hline
     &&&$\iota=0.01$&&&&\\
     \hline
   $\|E_{\sigma}\|_{\iota}$  & 5.26e+00 & 5.97e-01 &  1.32e-01 & 4.22e-02 & 1.70e-02 &   8.05e-03 &   4.27e-03 \\
    order&    --    &     3.14 &      3.72 &     3.96 &     4.07 &       4.11 &       4.12 \\
     $\|E_{u}\|_{L^2}$   & 1.09e+00 & 1.09e-01 & 4.59e-02 & 2.52e-02 & 1.45e-02 & 8.64e-03 & 5.36e-03\\
    order &    --    & 3.32     & 2.14     & 2.09     & 2.48     & 2.83     & 3.10   \\
    \hline
       &&&$\iota=1.0e-4$&&&& \\
     \hline
     $\|E_{\sigma}\|_{\iota}$   & 5.14e+00 & 5.89e-01 &  1.29e-01 & 4.09e-02 & 1.64e-02 &   7.78e-03 &   4.13e-03\\
     order  &    --     &     3.13 &      3.74 &     4.00 &     4.08 &       4.11 &       4.11    \\
       $\|E_{u}\|_{L^2}$     & 1.24e-01 & 1.24e-02 & 2.82e-03 & 9.49e-04 & 4.02e-04 & 1.97e-04 & 1.08e-04\\
       order&    --    & 3.32     & 3.65     & 3.78     & 3.85     & 3.90     & 3.92     \\
    \hline
           &&&$\iota=1.0e-6$&&&& \\
     \hline
     $\|E_{\sigma}\|_{\iota}$   &5.13e+00 & 5.87e-01 &  1.28e-01 & 4.05e-02 & 1.62e-02 &   7.68e-03 &   4.08e-03\\
     order    &    --    &     3.13 &      3.75 &     4.01 &     4.09 &       4.11 &       4.11  \\
       $\|E_{u}\|_{L^2}$   & 1.16e-01 & 1.19e-02 &  2.53e-03 & 7.93e-04 & 3.17e-04 &   1.50e-04 &   8.03e-05  \\
       order&    --     &     3.29 &      3.81 &     4.04 &     4.10 &       4.09 &       4.07   \\
    \hline
           &&&$\iota=1.0e-8$&&&& \\
     \hline
     $\|E_{\sigma}\|_{\iota}$    & 5.13e+00 & 5.87e-01 &  1.28e-01 & 4.05e-02 & 1.62e-02 &   7.68e-03 &   4.08e-03 \\
     order   &    --     &     3.13 &      3.75 &     4.01 &     4.09 &       4.11 &       4.11     \\
       $\|E_{u}\|_{L^2}$    & 1.16e-01 & 1.18e-02 &  2.53e-03 & 7.91e-04 & 3.17e-04 &   1.50e-04 &   8.02e-05\\
       order  &    --    & 3.29     & 3.81     & 4.04     & 4.10     & 4.09     & 4.07     \\
    \hline
           &&&$\iota=0$&&&& \\
     \hline
     $\|E_{\sigma}\|_{\iota}$    & 5.13e+00 & 5.87e-01 &  1.28e-01 & 4.05e-02 & 1.62e-02 &   7.68e-03 &   4.08e-03  \\
     order   &    --    &3.13 &      3.75 &     4.01 &     4.10 &       4.11 &       4.11 \\
       $\|E_{u}\|_{L^2}$      & 1.16e-01 & 1.18e-02 &  2.53e-03 & 7.91e-04 & 3.17e-04 &   1.50e-04 &   8.02e-05 \\
       order &     --    &   3.29 &      3.81 &     4.04 &     4.10 &       4.09 &       4.07 \\
    \hline
    \end{tabular}
        \label{tab:0}
    \end{table}
\end{center}

\begin{center}
\begin{table}
 \caption{Numerical experiments of $\Sigma_4^0\times Q_3^{-1}$ on Morgan-Scott Split }
    \begin{tabular}{cccccccc}\hline
      hsize & 1/2 & 1/4&1/6&1/8&1/10&1/12&1/14 \\
      \hline
    &&&$\iota=1$&&&&\\
    \hline
   $\|E_{\sigma}\|_{\iota}$   & 1.92e+01 & 2.40e+00 &  6.25e-01 & 2.31e-01 & 1.04e-01 &   5.39e-02 &   3.10e-02\\
     order&      --  &     3.00 &      3.32 &     3.46 &     3.56 &       3.63 &       3.58  \\
    $\|E_{u}\|_{L^2}$   & 7.79e+01 & 1.27e+01 &  4.01e+00 & 1.63e+00 & 7.83e-01 &   4.20e-01 &   2.45e-01  \\
     order &      -- & 2.62 & 2.83 & 3.13 & 3.29 & 3.41 & 3.51    \\
     \hline
     &&&$\iota=0.01$&&&&\\
     \hline
   $\|E_{\sigma}\|_{\iota}$& 3.51e+00 & 3.37e-01 &  7.00e-02 & 2.23e-02 & 9.13e-03 &   4.39e-03 &   2.36e-03\\
    order& -- &     3.38 &      3.88 &     3.97 &     4.00 &       4.02 &       4.03   \\
     $\|E_{u}\|_{L^2}$    & 8.19e-01 & 1.28e-01 &  4.04e-02 & 1.64e-02 & 7.85e-03 &   4.21e-03 &   2.45e-03  \\
    order   & -- &     2.67 &      2.85 &     3.14 &     3.30 &       3.42 &       3.51    \\
    \hline
       &&&$\iota=1.0e-04$&&&& \\
     \hline
     $\|E_{\sigma}\|_{\iota}$     & 3.40e+00 & 3.24e-01 &  6.76e-02 & 2.17e-02 & 8.92e-03 &   4.30e-03 &   2.32e-03   \\
     order   &    --    &     3.39 &      3.86 &     3.95 &     3.98 &       4.00 &       4.01      \\
       $\|E_{u}\|_{L^2}$   & 7.43e-02 & 6.87e-03 &  1.46e-03 & 4.81e-04 & 2.02e-04 &   9.92e-05 &   5.44e-05 \\
       order &      --   & 3.44 & 3.81 & 3.87 & 3.89 & 3.90 & 3.91     \\
    \hline
           &&&$\iota=1.0e-06$&&&& \\
     \hline
     $\|E_{\sigma}\|_{\iota}$    & 3.39e+00 & 3.21e-01 &  6.68e-02 & 2.14e-02 & 8.80e-03 &   4.24e-03 &   2.29e-03 \\
     order   &    --             &     3.40 &      3.87 &     3.95 &     3.98 &       4.00 &       4.01    \\
       $\|E_{u}\|_{L^2}$    & 6.88e-02 & 6.39e-03 & 1.34e-03 & 4.32e-04 & 1.79e-04 & 8.67e-05 & 4.70e-05 \\
       order &      --     & 3.43     & 3.86     & 3.93     & 3.95     & 3.97     & 3.98     \\
    \hline
           &&&$\iota=1.0e-08$&&&& \\
     \hline
     $\|E_{\sigma}\|_{\iota}$   & 3.39e+00 & 3.21e-01 &  6.68e-02 & 2.14e-02 & 8.80e-03 &   4.24e-03 &   2.29e-03   \\
     order  &    --    &     3.40 &      3.87 &     3.95 &     3.98 &       4.00 &       4.01   \\
       $\|E_{u}\|_{L^2}$    & 6.87e-02 & 6.39e-03 & 1.34e-03 & 4.32e-04 & 1.79e-04 & 8.67e-05 & 4.70e-05\\
       order   &    --    &   3.43   &   3.86   &   3.93   &   3.95   &   3.97   &   3.98   \\
    \hline
           &&&$\iota=0$&&&& \\
     \hline
     $\|E_{\sigma}\|_{\iota}$     & 3.39e+00 & 3.21e-01 &  6.68e-02 & 2.14e-02 & 8.80e-03 &   4.24e-03 &   2.29e-03 \\
     order  & --- &     3.40 &      3.87 &     3.95 &     3.98 &       4.00 &       4.01    \\
       $\|E_{u}\|_{L^2}$    & 6.87e-02 & 6.39e-03 &  1.34e-03 & 4.32e-04 & 1.79e-04 &   8.67e-05 &   4.70e-05 \\
       order & --- & 3.43 & 3.86 & 3.93 & 3.95 & 3.93 &  3.98  \\
    \hline
    \end{tabular}
         \label{tab:1}
    \end{table}
\end{center}
% Although theoretically the finite element pairs $\Sigma_k^{i}-Q_{k-1}^{i-1},~i=0,1$ requires $k \geq 7$to be stable, numerical experiments show that these pairs are also effective when $k=4$. This paper does not provide a $\iota$-related regularity analysis, but numerical experiments indicate that the regularity of exact solution will depend on $\iota$ ,  because the convergence rate decreases when $\iota$ becomes small.

\subsection{Effect of $\partial_n \sigma_0$}

As suggested by \Cref{prop:bdlayer-type}, the convergence mode will be different for the case when $\partial_n \sigma =0$ and $\partial_n \sigma \neq 0$. This is further investigated in this subsection, in particular,  $H^2$ stability of $\sigma_h$, where we choose Lam\'e constants $\mu=0.3,\lambda=1$ be mild.

To achieve this, we set
$$
\tilde u_0=(\sin x\sin y(\sin x-\sin 1 )(\sin y-\sin 1) ,\sin x\sin y(e^x-e )(e^y-e) )^T.
$$
And $\tilde \sigma_0=\mathcal{A}^{-1}\mathrm{symgrad} (\tilde u_0).$ Note that for this pair, it holds that $\partial_n\tilde \sigma_0|_{\partial \Omega}\neq 0.$  In contrast, the pair $(\sigma_0, u_0)$ discussed in the previous subsection have the consistent boundary condition, namely, $\partial_n \sigma_0 = 0$.  We test the two right hand sides on $1/2,1/4,\cdots,1/128$ size mesh with the pair $(\Sigma^2_5,Q^1_4),$ and $\iota$ set as $10^{-4},10^{-5} $ and $10^{-6}$.  Figure \ref{fig:sigma0} and \ref{fig:tildesigma0} show the maximum value of $\sigma_{h,xy}$ on each mesh nodes for two different boundary conditions ($ \partial_n \sigma_0 = 0$ and $\partial \tilde \sigma_0 \neq 0$), respectively. For $(\sigma_0, u_0)$, the computational second-order derivatives in \Cref{fig:sigma0} are stable with respect to $\iota$ and $h$, while the discrete $W^{2,\infty}$ stability is not observed in \Cref{fig:tildesigma0}.

Current numerical experiments indicate that the optimal convergence rate can be achieved even when $\iota \to 0$, provided the left-hand side is smooth enough. A thorough analysis of the continuous problem and more theoretical explanations for the numerical results are beyond the scope of this paper and will be addressed in future work.

 \begin{figure}[htbp]
\begin{minipage}{0.4\textwidth}
\centering
\begin{tikzpicture}[scale = 0.5]
\begin{axis}[
    width=12cm,
    xlabel={$-\log_2~\texttt{hsize}$},
    ylabel={$\|D^2\sigma_{\iota,h}\|_{\infty,h}$},
    legend pos=north east,
    grid=both,
    minor tick num=5,
    title={$\lambda=1,\mu=0.3$ },
    legend cell align={left}
]
\addplot+[mark=triangle*, thick] coordinates {
(1, 794.803)
(2, 546.8935)
(3, 526.3693)
(4, 525.9476)
(5, 526.0028)
(6, 526.0086)
};
\addlegendentry{$\|D^2\sigma_{\iota,h}\|_{\infty,h},~\iota=1.0e-03$}

\addplot+[mark=o, thick] coordinates {
(1, 881.8342)
(2, 561.9746)
(3, 526.2296)
(4, 525.9229)
(5, 526.0019)
(6, 526.0086)
};
\addlegendentry{$\|D^2\sigma_{\iota,h}\|_{\infty,h},~\iota=1.0e-04$ }

\addplot+[mark=triangle*, thick] coordinates {
(1, 946.0311)
(2, 597.6946)
(3, 526.1217)
(4, 525.8835)
(5, 525.9981)
(6, 526.0085)
};
\addlegendentry{$\|D^2\sigma_{\iota,h}\|_{\infty,h},~\iota=1.0e-05$ }

\addplot+[mark=square*, thick] coordinates {
(1, 953.6676)
(2, 603.4002)
(3, 526.0994)
(4, 525.8724)
(5, 525.9952)
(6, 526.0082)
};
\addlegendentry{$\|D^2\sigma_{\iota,h}\|_{\infty,h},~\iota=1.0e-06$}

\end{axis}
\end{tikzpicture}
\end{minipage}
\begin{minipage}{0.4\textwidth}
\centering
 \includegraphics[width=1.2\linewidth]{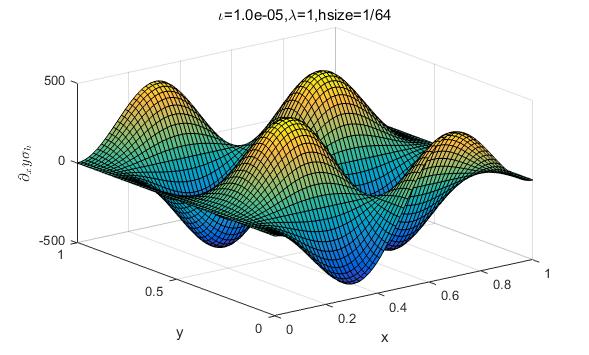}
\end{minipage}

 \caption{ The error of  $\|D^2 \sigma_{\iota,h} \|_{\infty,h}$ (left) and a profile of $\partial_{xy}  \sigma_{\iota,h}$ (right) for $\partial_n \sigma_0|_{\partial\Omega}= 0.$   }
\label{fig:sigma0}
\end{figure}

 \begin{figure}[htbp]
\begin{minipage}{0.4\textwidth}
\centering
\begin{tikzpicture}[scale = 0.5]
\begin{axis}[
    width=12cm,
  xlabel={$-\log_2~\texttt{hsize}$},
    ylabel={$\|\partial_xy\sigma_h\|_{\infty,h}$},
    legend pos=north west,
    grid=both,
    minor tick num=5,
    title={$\lambda=1,\mu=0.3$ },
    legend cell align={left}
]

\addplot+[mark=triangle*, thick] coordinates {
(1, 6.9026)
(2, 11.0463)
(3, 20.2497)
(4, 28.5922)
(5, 37.3297)
(6, 52.6413)
(7, 71.6812)
};
\addlegendentry{$\|\partial_{xy}\sigma_h\|_{\infty,h},~\iota=1.0e-03$}

\addplot+[mark=o, thick] coordinates {
(1, 2.3661)
(2, 11.5215)
(3, 28.7314)
(4, 53.9067)
(5, 76.6352)
(6, 98.5029)
(7,127.4402)
};
\addlegendentry{$\|\partial_{xy}\sigma_h\|_{\infty,h},~\iota=1.0e-04$ }

\addplot+[mark=triangle*, thick] coordinates {
(1, 2.4791)
(2, 2.6474)
(3, 13.3579)
(4, 53.2579)
(5, 125.712)
(6, 204.0388)
(7, 266.7546)
};
\addlegendentry{$\|\partial_{xy}\sigma_h\|_{\infty,h},~\iota=1.0e-05$ }

\addplot+[mark=square*, thick] coordinates {
(1, 2.6882)
(2, 2.6983)
(3, 2.8897)
(4, 12.7649)
(5, 71.3525)
(6, 231.9887)
(7, 500.8997)
};
\addlegendentry{$\|\partial_{xy}\sigma_h\|_{\infty,h},~\iota=1.0e-06$}

\end{axis}
\end{tikzpicture}
\end{minipage}
\begin{minipage}{0.4\textwidth}
\centering
 \includegraphics[width=1.2\linewidth]{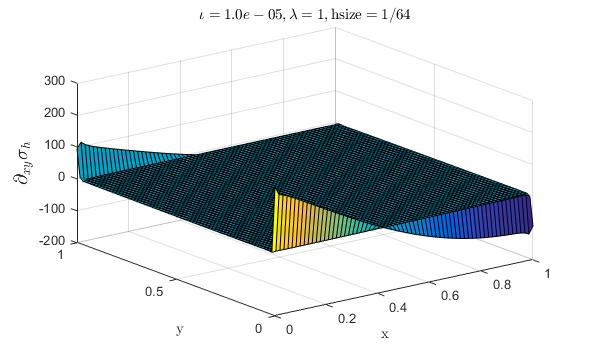}
\end{minipage}

 \caption{The error of  $\|\partial_{xy}\tilde \sigma_{\iota,h}\|_{\infty,h}$ (left) and a profile of $\partial_{xy} \tilde \sigma_{\iota,h}$ (right) for $\partial_n\tilde \sigma_0|_{\partial\Omega}\neq 0.$  }
\label{fig:tildesigma0}
\end{figure}

\begin{comment} 
{\color{red}
Some explanation for the primal formulation. Assume $\pmb u$ is the exact solution of 
linear stress problem, i.e.,
$$ 
\begin{cases} 
&-\mathrm{div}(I+\iota\triangle)^{-1}\sigma(\pmb u)=\pmb f.\quad \text{in }\Omega\\
&\pmb u =0,~\text{on }\partial \Omega.
\end{cases}
$$
(Here I am not sure need add one more BD condition $\partial_n\sigma(\pmb u)=0$.)
Then use the commute property for $\mathrm{div}$ and $(I+\iota \triangle)^{-1}$, we have
$$
-\mathrm{div}\sigma(\pmb u)=(I+\iota \triangle)f
$$

And let $\pmb u_0$ be the exact solution of linear elasticity problem, i.e.,
$$
-\mathrm{div}\sigma(\pmb u_0)=f.
$$
Then 
$$
\mathrm{div}(\pmb u_0-\pmb u)=\iota\triangle f.
$$
According to the regularity result of linear elasticity problem,
$$
|\pmb u-\pmb u_0|_1+\lambda\|\mathrm{div}(\pmb u-\pmb u_0)\|_0\lesssim \iota|f|_1.
$$

}

\end{comment}
\section{Concluding Remarks}
\label{sec:discussion}

In this paper, we study the finite element discretization of the linear stress gradient theory. The stability usually relies on the geometric structure. Nevertheless, the following questions still remains open. 
\begin{enumerate}
\item The error estimates at the current stage are not perfect enough. Notably, the asymptotic behavior of the solution as $\iota \to 0$ is absent due to the lack of higher-order regularity results. For higher-order theory, a boudary layer effects would be investigated. It would be interesting to investigate whether the finite element pairs constructed in this paper exhibit the desired asymptotic behavior. 
% For example, the estimates on the exact solution $\| u_{\iota} - u_{0}\|_{H(\div)}$, and on the finite element solution $\| u_{\iota,h} - u_{0,h}\|_{H(\div)}$. Here, $u_{\iota}, u_{\iota,h}$ is the exact solution and the finite element solution with parameter $\iota$, while $u_0$, $u_{0,h}$ is the finite element solution with parameter $\iota = 0$. Note again that the latter case reduces to the standard elasticity equations. 

\item The requirement of the polynomial degree $ k \ge 7$ is far from practice concerns. In fact, all the finite element pair considered in this paper can be constructed with smaller polynomial degrees. The reduction of the polynomial degree requirement will be a part of the future work.
{
\item In this paper we focus on natural boundary condition to simplify the argument while addressing the geometric singularities. For boundary conditions $\sigma n = 0$ on $\Gamma_f \neq \emptyset$, the dimension of discrete space $\Sigma_{h,\Gamma_f}^r$ will be affected by the corner points on $\Gamma_f$, and the stability result will be affected by the singular vertices on $\Gamma_f$. The results can be analyzed similarly as \cite{FalkNeilan13}, and the detailed results are beyond the scope of this paper.
}
\end{enumerate}

\section*{Acknowledgement}
The authors sincerely thank Prof. Dietmar Gallistl for his guidance in discussions and constructive feedback on the manuscript.

\newpage 
\appendix 

\section{Technical Details of \Cref{sec:Sigma0}}
\label{sec:appendix}
We first show the proof of \Cref{lem:necessary-type-II}.
\begin{proof}[Proof of \Cref{lem:necessary-type-II}]
Since $\sigma$ is continuous, and $\{ t_i t_i^T \}_{i=1,2,3}$ forms a basis for $\mathbb S^2$, we express
$\sigma = \alpha t_1 t_1^T + \beta t_2 t_2^T + \gamma t_3 t_3^T,
$ 
where $\alpha$, $\beta$, and $\gamma$ are continuous functions. On each element $T_i$, we express this as
\[
\sigma|_{T_i} = \alpha_i t_1 t_1^T + \beta_i t_2 t_2^T + \gamma_i t_3 t_3^T,
\]
where $\alpha_i = \alpha|_{T_i}(z)$, and $\beta_i$, $\gamma_i$ are similarly defined.

Computing the divergence projected onto $n_{i+1}$, we obtain:
\begin{align*}
\div \sigma|_{T_1}(z) \cdot n_2 &= (n_2 \cdot t_1) \partial_{t_1} \alpha_1 + (n_2 \cdot t_3) \partial_{t_3} \gamma_1 = -\sin \theta_2 \partial_{t_1} \alpha_1 + \sin \theta_3 \partial_{t_3} \gamma_1, \\
\div \sigma|_{T_2}(z) \cdot n_3 &= (n_3 \cdot t_2) \partial_{t_2} \beta_2 + (n_3 \cdot t_1) \partial_{t_1} \alpha_2 = -\sin \theta_3 \partial_{t_2} \beta_2 - \sin \theta_1 \partial_{t_1} \alpha_2, \\
\div \sigma|_{T_3}(z) \cdot n_4 &= (n_4 \cdot t_3) \partial_{t_3} \gamma_3 + (n_4 \cdot t_2) \partial_{t_2} \beta_3 = -\sin \theta_1 \partial_{t_3} \gamma_3 - \sin \theta_2 \partial_{t_2} \beta_3, \\
\div \sigma|_{T_4}(z) \cdot n_5 &= (n_5 \cdot t_1) \partial_{t_1} \alpha_4 + (n_5 \cdot t_3) \partial_{t_3} \gamma_4 = \sin \theta_2 \partial_{t_1} \alpha_4 - \sin \theta_3 \partial_{t_3} \gamma_4, \\
\div \sigma|_{T_5}(z) \cdot n_6 &= (n_6 \cdot t_2) \partial_{t_2} \beta_5 + (n_6 \cdot t_1) \partial_{t_1} \alpha_5 = \sin \theta_3 \partial_{t_2} \beta_5 + \sin \theta_1 \partial_{t_1} \alpha_5, \\
\div \sigma|_{T_6}(z) \cdot n_1 &= (n_1 \cdot t_3) \partial_{t_3} \gamma_6 + (n_1 \cdot t_2) \partial_{t_2} \beta_6 = \sin \theta_1 \partial_{t_3} \gamma_6 + \sin \theta_2 \partial_{t_2} \beta_6.
\end{align*}
Since $\alpha$, $\beta$, and $\gamma$ are continuous, we have $\alpha_i = \alpha_{i+1}$, $\beta_i = \beta_{i+1}$, and $\gamma_i = \gamma_{i+1}$ across adjacent elements. Additionally, $\sin \theta_i = \sin \theta_{i+3}$ due to the geometric structure around $z \in \mathcal{V}_{II}$. Summing the terms in \eqref{eq:lag-cond-typeII}, the contributions of $\partial_{t_j} \alpha_i$, $\partial_{t_j} \beta_i$, and $\partial_{t_j} \gamma_i$ cancel due to the alternating signs and periodicity of $\sin \theta_i$, yielding
\[
\sum_{i=1}^6 (-1)^i \sin \theta_i \, q_i(z) \cdot n_{i+1} = 0,
\]
as required.
\end{proof}

In the rest of this section, we consider the sufficient condition of the degenerate type II singular vertices. This is based on the following identity on imaging meshes:
\begin{equation}
    \sin \theta_{i} n_{i+1} -  \sin \theta_{i+1} n_{i+2} = \sin(\theta_{i+1} + \theta_{i}) n_{i+3}.
\end{equation}

\begin{figure}[htbp]
\begin{tikzpicture}[scale=1.5]
\draw[dashed] (0,0) -- (1,0);
\draw[thick] (1/2,1.732/2) -- (1,0);
%\draw[thick] (1/2,1.732/2) -- (1/2,-1.732/2);
%\draw[thick] (0,0) -- (1/2,-1.732/2);
\draw[thick] (1,0) -- (1/2,-1.732/2);
\draw[thick] (1,0) -- (3/2,1.732/2);
%\draw[thick] (2,0) -- (3/2,1.732/2);
\draw[thick] (2,0) -- (1,0);
%\draw[thick] (1/2,1.732/2) -- (3/2,1.732/2);
%\draw[thick] (1/2,-1.732/2) -- (3/2,-1.732/2);
\draw[thick] (1,0) -- (3/2,-1.732/2);
%\draw[thick] (2,0) -- (3/2,-1.732/2);
\node [below=] at (3/2,-1.732/2+0.7) {$T_2$};
\node [below=] at (3/2,+0.5) {$T_3$};
\node [below=] at (1,0.8) {$T_4$};
\node [below=] at (0.2,0.2) {$T_{5,6}$};
\node [below=] at (1.,-0.5) {$T_{1}$};
\end{tikzpicture}
\caption{An illustruation for type II-1 singularity.}
\label{fig:typeII-1}

\end{figure}
\begin{lemma}[Sufficient Conditions for Type II-1 Singularity]
\label{lem:typeII-1-sufficient}
Suppose $z \in \mathcal{V}_{II}$ is a Type II-1 singularity (see \Cref{fig:typeII-1}), and $q \in Q_{k-1}^0$ satisfies \eqref{eq:lag-cond-typeII}. Then, there exists $\sigma \in \Sigma_k^0$ such that:
\begin{enumerate}
\item $\sigma$ is supported in $\st(z)$.
\item $\div \sigma|_T(z) = q|_T(z)$ for all $T \in \st(z)$.
\item $\div \sigma|_T(y) = 0$ for all $y \neq z$, $T \in \st(z)$.
\end{enumerate}
Moreover, the bound holds:
\[
\|\sigma\|_{H^1} \lesssim \|q\|_{L^2(\st(z))}.
\]
\end{lemma}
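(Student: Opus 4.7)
The plan is to mirror the two-step construction of \Cref{lem:suff-typeII} while handling the geometric degeneracy: the vertex $y_5$ is imaginary, so the barycentric coordinate $\psi_5$ is unavailable, and $T_5 \cup T_6$ collapses to a single triangle $T_{5,6}$ on which any admissible $\sigma$ must restrict to a single polynomial. Consequently, the cascade will use the rescaled basis tensors $\tau_s = \frac{|\overline{zy_s}|}{\sin \theta_s \sin \theta_{s+1}} \psi_s \psi_z^2 t_s t_s^T$ only for $s \in \{1,2,3,4,6\}$, whose supports are $T_s \cup T_{s+1}$ for $s = 1,2,3$, then $T_4 \cup T_{5,6}$ for $s=4$, and $T_{5,6} \cup T_1$ for $s = 6$. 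The critical algebraic input will be the three-line identity $\sin \theta_5 n_6 - \sin \theta_6 n_1 = \sin(\theta_5 + \theta_6) n_2$, which lets us reinterpret what would have been the ``lost'' normal constraint at the imaginary edge $\overline{zy_5}$.

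In Step 1 we fix one normal component of $\div \sigma|_{T_i}(z)$ in each real element. For $T_1, T_2, T_3, T_4$ the cascade proceeds exactly as in \Cref{lem:suff-typeII}: set $\sigma_1 = (q_1(z) \cdot n_2)\tau_1$ and iterate $\sigma_r = (q_r(z) \cdot n_{r+1} - \div \sigma_{r-1}|_{T_r}(z) \cdot n_{r+2})\tau_r$ for $r = 2, 3, 4$. On the merged element $T_{5,6}$ we set $\sigma_6 = \mu \tau_6$ with a single scalar $\mu$ chosen so that one projection of $\div(\sigma_1 + \cdots + \sigma_4 + \sigma_6)|_{T_{5,6}}(z)$ matches $q_{5,6}(z)$. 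Since $T_{5,6}$ is a single triangle, $\div \sigma|_{T_{5,6}}(z)$ is only a single vector in $\mathbb{R}^2$, and the specialization of \eqref{eq:lag-cond-typeII} to $q_5(z) = q_6(z) = q_{5,6}(z)$, combined with the three-line identity, is precisely the constraint guaranteeing that the complementary projection (the one corresponding to the imaginary normal $n_5$, equivalently the $n_2$-direction) is already correct. Thus after Step 1, the residual $\div \sigma|_{T_i}(z) - q_i(z)$ in each real element reduces to a scalar multiple of a single tangential direction $t_{i+1}$.

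In Step 2 we eliminate this tangential residual using correctors $\tilde \eta_s = \psi_s \psi_z^2 t_{s+1} t_{s+1}^T$, whose divergence at $z$ points along $t_{s+1}$, exactly as in \Cref{lem:suff-typeII}. On $T_1, T_2, T_3, T_4$ the argument is verbatim. On $T_{5,6}$ a single local tangential corrector suffices; it can be realized either with $\tilde \eta_4$ or $\tilde \eta_6$ (whose supports contain $T_{5,6}$) or with a bubble tensor $\psi_z \psi_4 \psi_6 u u^T$ supported entirely inside $T_{5,6}$ with $u$ chosen in the residual direction. Assembling $\sigma$ as the sum of all cascade tensors and correctors, conclusions (1)--(3) follow by construction, and the bound $\|\sigma\|_{H^1} \lesssim \|q\|_{L^2(\st(z))}$ is obtained by the usual scaling and shape-regularity arguments. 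The denominators $\sin \theta_s$ and $\sin(\theta_s + \theta_{s+1})$ appearing in the rescalings are all bounded below because a Type II-1 vertex still sits at the intersection of three \emph{distinct} lines, so none of the angular factors degenerates.

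The main obstacle is showing that \eqref{eq:lag-cond-typeII}, specialized by $q_5 = q_6 = q_{5,6}$, is exactly the algebraic identity needed to close the cascade on the merged element $T_{5,6}$. This reduces via the three-line identity to verifying that $\sin \theta_5 n_6 - \sin \theta_6 n_1$ is proportional to $n_2$, so that the single scalar $\mu$ chosen for $\sigma_6$ is enough to match $\div \sigma|_{T_{5,6}}(z)$ in the remaining direction without introducing a second, independent constraint. Once this algebraic compatibility is verified, the rest of the proof is a routine adaptation of the nondegenerate argument, and the analogous sufficiency proofs for Type II-2 and Type II-3 singularities in \Cref{sec:appendix} proceed along the same lines, with two or three merged pairs respectively.
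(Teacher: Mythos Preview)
Your route differs from the paper's. You adapt the nondegenerate two-step cascade of \Cref{lem:suff-typeII} directly, keeping the tensors $\tau_s$ for the five admissible indices $s\in\{1,2,3,4,6\}$ and appealing to the three-line identity to close the normal-projection step on the merged element. The paper instead works with the correctors $\xi_r(u)$ of \eqref{eq:xi-r}: its key observation is that the Type~II collinearity $t_3=-t_6$ forces $\theta_4+\theta_{5,6}=\pi$, so $\div\xi_4(u)|_{T_{5,6}}(z)=u$ exactly as in the Type~I setting, and subtracting $\xi_4(q_{5,6}(z))$ produces $q'$ with $q'|_{T_{5,6}}(z)=0$. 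The degenerate element is thus eliminated in one preliminary step, and the remaining four-element problem is handled without any wrap-around bookkeeping.

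Your Step~1, as written, has a gap: $\sigma_6=\mu\tau_6$ is supported on $T_{5,6}\cup T_1$, and $\div\tau_6|_{T_1}(z)\propto t_6$ has nonzero $n_2$-component (indeed $t_6\cdot n_2=-t_3\cdot n_2=\pm\sin\theta_3\neq 0$), so adding $\sigma_6$ destroys the $n_2$-projection in $T_1$ that $\sigma_1$ just fixed. The constraint \eqref{eq:lag-cond-typeII} cannot repair both the $T_1$ and $T_{5,6}$ residuals simultaneously, being a single scalar relation. The cleanest repair is to drop $\sigma_6$ entirely: after $\sigma_1+\cdots+\sigma_4$ the constraint already forces $(\div\sigma-q)|_{T_{5,6}}(z)\cdot n_2=0$, so the $T_{5,6}$-residual lies along $t_2$ and can be killed in Step~2 by, e.g., $\psi_6\psi_z^2\,t_2t_2^T$ (whose $T_1$-contribution is along $t_2\perp n_2$). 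Likewise your claim that Step~2 on $T_4$ is ``verbatim'' is not quite right: $\tilde\eta_4=\psi_4\psi_z^2\,t_5t_5^T$ no longer has vanishing divergence on $T_{5,6}$ at $z$, because the opposite edge is now $\overline{zy_6}$ rather than $\overline{zy_5}$; the tensor $\psi_3\psi_z^2\,t_5t_5^T$ is a suitable replacement. With these adjustments your direct-cascade strategy goes through, but the paper's reduction via $\xi_4$ avoids these complications altogether.
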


\begin{proof}
For a Type II-1 singularity at $z \in \mathcal{V}_{II}$, the star $\st(z)$ consists of five elements $T_1, T_2, T_3, T_4, T_{5,6}$, where $T_{5,6}$ is a single element sharing edges with $T_4$ and $T_1$, and $n_5 = -n_2$, $\sin \theta_4 = \sin \theta_{5,6}$ (see \Cref{fig:typeII-1}). The condition \eqref{eq:lag-cond-typeII} simplifies to:
\[
\sum_{i=1}^4 (-1)^i \sin \theta_i q_i(z) \cdot n_{i+1} + (-1)^5 \sin \theta_{5,6} q_{5,6}(z) \cdot n_2 = 0.
\]
Since $n_5 = -n_2$ and $\sin \theta_4 = \sin \theta_{5,6}$, this becomes:
\begin{equation}
\label{eq:typeII-1-cond}
\sum_{i=1}^3 (-1)^i \sin \theta_i q_i(z) \cdot n_{i+1} + \sin \theta_4 (q_4(z) - q_{5,6}(z)) \cdot n_2 = 0.
\end{equation}

Recall the function $\xi_r(u)$ from \eqref{eq:xi-r} in \Cref{lem:nonsingular-sufficient}:
\[
\xi_r(u) = \psi_r \psi_z^2 \frac{u \cdot n_{r-1}}{n_{r-1} \cdot t_r} \frac{h_r}{\sin \theta_r} t_r t_r^T + \psi_r \psi_z^2 \frac{u \cdot n_r}{n_r \cdot t_{r-1}} \frac{h_{r-1}}{\sin \theta_r} (t_r t_{r-1}^T + t_{r-1} t_r^T).
\]
For Type II-1 singularities, as in \Cref{lem:typeI-sufficient}, the mesh geometry implies $\sin(\theta_r + \theta_{r+1}) = 0$, so \eqref{eq:div-xi-r} holds: $\div \xi_r(u)|_{T_r}(z) = u$ and $\div \xi_r(u)|_{T_{r+1}}(z) = u$. For $r=4$, since $T_{5,6}$ is adjacent to $T_4$, we have:
\[
\div \xi_4(u)|_{T_4}(z) = u, \quad \div \xi_4(u)|_{T_{5,6}}(z) = u.
\]
Define $q' = q - \div \xi_4(q_{5,6}(z))$. Then:
\begin{itemize}
\item In $T_{5,6}$: $\div \xi_4(q_{5,6}(z))|_{T_{5,6}}(z) = q_{5,6}(z)$, so $q'_{T_{5,6}}(z) = q_{5,6}(z) - q_{5,6}(z) = 0$.
\item In $T_4$: $\div \xi_4(q_{5,6}(z))|_{T_4}(z) = q_{5,6}(z)$, so $q'_{T_4}(z) = q_4(z) - q_{5,6}(z)$.
\item In $T_i$, $i=1,2,3$: $\div \xi_4(q_{5,6}(z))|_{T_i}(z) = 0$, so $q'_{T_i}(z) = q_i(z)$.
\end{itemize}
%Verify that $q'$ satisfies \eqref{eq:typeII-1-cond}:
%\[
%\sum_{i=1}^3 (-1)^i \sin \theta_i q'_i(z) \cdot n_{i+1} + \sin \theta_4 (q'_4(z) - q'_{5,6}(z)) \cdot n_2 = \sum_{i=1}^3 (-1)^i \sin \theta_i q_i(z) \cdot n_{i+1} + \sin \theta_4 (q_4(z) - q_{5,6}(z)) \cdot n_2 = 0,
%\]
%since $q'_i(z) = q_i(z)$ for $i=1,2,3$, $q'_4(z) = q_4(z) - q_{5,6}(z)$, and $q'_{5,6}(z) = 0$. 
Thus, $q'$ satisfies \eqref{eq:lag-cond-typeII} with $q'_{T_{5,6}}(z) = 0$.

Now, we can apply a construction similar to \Cref{lem:typeI-sufficient} for the four elements $T_1, T_2, T_3, T_4$, which completes the proof.
\end{proof}

Using similar techniques, we can prove the results of Type II-2 singularity.
\begin{lemma}
    Suppose that $z \in \mathcal V_{II}$ is of Type II-2 (see also \Cref{fig:typeII-2}), and $q$ satisfies \eqref{eq:lag-cond-typeII}. Then there exists $\sigma \in \Sigma_k^0$ satisfying the conditions in \Cref{lem:suff-typeII}.
\end{lemma}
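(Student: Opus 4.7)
The approach is to mirror the proof of \Cref{lem:typeII-1-sufficient}, but with two sequential eliminations instead of one, corresponding to the two merged pairs $T_{3,4}$ and $T_{5,6}$. The geometric setup, using the imaging convention from \Cref{fig:Type II-singularity-type}, gives $q_3(z) = q_4(z) =: q_{3,4}(z)$ and $q_5(z) = q_6(z) =: q_{5,6}(z)$, and, together with $\sin\theta_i = \sin\theta_{i+3}$ and $n_{i+3} = -n_i$ from the three-line geometry, reduces \eqref{eq:lag-cond-typeII} to a single scalar linear relation among the values $q_1(z), q_2(z), q_{3,4}(z), q_{5,6}(z)$.

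First, I would recall the function $\xi_r(u)$ defined in \eqref{eq:xi-r}. For each of the two merged elements, the missing internal edge causes exactly the same local simplification that was exploited in \Cref{lem:typeII-1-sufficient}, namely the identity $\div \xi_r(u)|_{T_r}(z) = \div \xi_r(u)|_{T_{\mathrm{merged}}}(z) = u$, because the absent edge is precisely the one that would have made $\sin(\theta_r + \theta_{r+1})$ appear as a nonzero obstruction. Applying this with an index $r_1$ chosen so that $\xi_{r_1}$ is supported on $T_{r_1} \cup T_{3,4}$, set $\sigma^{(1)} = \xi_{r_1}(q_{3,4}(z))$ and $q^{(1)} := q - \div \sigma^{(1)}$. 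Then $q^{(1)}$ still satisfies \eqref{eq:lag-cond-typeII}, vanishes on $T_{3,4}$, and differs from $q$ only on $T_{r_1}$, with $\|\sigma^{(1)}\|_{H^1} \lesssim |q_{3,4}(z)|\, h \lesssim \|q\|_{L^2(\st(z))}$ by scaling.

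Second, I would repeat the elimination on $T_{5,6}$: choose $r_2$ so that $\xi_{r_2}$ is supported on $T_{r_2} \cup T_{5,6}$ and is disjoint from $T_{3,4}$ (this is possible because $T_{3,4}$ and $T_{5,6}$ are not adjacent in the star of $z$ under the Type II-2 configuration). Set $\sigma^{(2)} = \xi_{r_2}(q^{(1)}_{5,6}(z))$ and $q^{(2)} := q^{(1)} - \div \sigma^{(2)}$. Now $q^{(2)}$ vanishes on both degenerate elements, still satisfies \eqref{eq:lag-cond-typeII}, and that relation now collapses to a single scalar linear equation between $q^{(2)}_1(z)\cdot n_2$ and $q^{(2)}_2(z)\cdot n_3$. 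On the remaining pair $\{T_1, T_2\}$ I would apply a direct two-element construction analogous to the first step of the proof of \Cref{lem:suff-typeII}: use the $\eta_s = \psi_s\psi_z^2 t_s t_s^T$ functions, with the free scalar coefficients determined by matching the two projections $\div\sigma^{(3)}|_{T_i}(z)\cdot n_{i+1}$ on $T_1, T_2$ and closing the system via the inherited scalar compatibility. A second pass with functions of the form $\psi_s \psi_z^2 t_{s+1} t_{s+1}^T$ (as in the second step of \Cref{lem:suff-typeII}) corrects the complementary tangential components. The resulting $\sigma^{(3)}$ satisfies $\div\sigma^{(3)}(z) = q^{(2)}(z)$ pointwise and $\|\sigma^{(3)}\|_{H^1} \lesssim \|q^{(2)}\|_{L^2(\st(z))}$. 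Then $\sigma := \sigma^{(1)} + \sigma^{(2)} + \sigma^{(3)}$ satisfies all conclusions.

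The main obstacle is the bookkeeping in the second elimination: one has to verify that $\sigma^{(2)}$ does not reintroduce a nonzero value of $\div\sigma^{(2)}$ on $T_{3,4}$ (which was already zeroed), and that the scalar relation left after both reductions is precisely the one solvable by the two-element $\eta_s$ construction without picking up an inverse of $\Theta_{II}(z)$. Both points follow from choosing $r_1, r_2$ with the correct local adjacency and from the fact that the degeneracy eliminates the geometric factor that would otherwise appear in the denominator; shape regularity then gives $\|\sigma\|_{H^1} \lesssim \|q\|_{L^2(\st(z))}$ uniformly.
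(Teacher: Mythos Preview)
Your approach matches the paper's: subtract $\xi$ functions to zero out the values on the two merged elements, then apply the two-step $(\eta_s,\tilde\sigma_s)$ construction of \Cref{lem:suff-typeII} on the remaining pair $T_1,T_2$. The paper does this in one line as $q' = q - \div\xi_2(q_{3,4}(z)) - \div\xi_6(q_{5,6}(z))$ and then invokes $\sigma_1,\tilde\sigma_1,\tilde\sigma_2$.

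One correction: your parenthetical justification ``$T_{3,4}$ and $T_{5,6}$ are not adjacent in the star of $z$'' is false for Type~II-2---the elements around $z$ are $T_1, T_2, T_{3,4}, T_{5,6}$ in cyclic order, so the two merged elements share the edge $zy_4$. The disjoint-support choice you need is nevertheless available, because each merged element also has a \emph{non}-merged neighbor: take $\xi_2$ (supported in $T_2\cup T_{3,4}$) and $\xi_6$ (supported in $T_{5,6}\cup T_1$), exactly as the paper does. With this choice the two eliminations do not interact, so your sequential bookkeeping worry disappears and the subtractions can in fact be done simultaneously.
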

\begin{proof}

For type II-2, \eqref{eq:lag-cond-typeII} reads $$\sin \theta_1 (q_{5,6}(z) - q_1(z)) \cdot n_2 + \sin \theta_2 (q_2(z) - q_{3,4}(z)) \cdot n_3 = 0.$$

We can consider $q' = q - \div \xi_{2}(q_{3,4}(z)) - \div \xi_{6}(q_{5,6}(z))$. Clearly, $q'$ satisfies \eqref{eq:lag-cond-typeII}, but $q'|_{T_{3,4}} = q'|_{T_{5,6}} = 0$. The remaining argument is similar to \Cref{lem:suff-typeII} by using $\sigma_1$, and $\tilde \sigma_1$, $\tilde \sigma_2$. 

\begin{figure}
				\begin{tikzpicture}[scale=1.5]
\draw[dashed] (0,0) -- (1,0);
\draw[thick] (1/2,1.732/2) -- (1,0);
%\draw[thick] (1/2,1.732/2) -- (1/2,-1.732/2);
%\draw[thick] (0,0) -- (1/2,-1.732/2);
\draw[thick] (1,0) -- (1/2,-1.732/2);
\draw[dashed] (1,0) -- (3/2,1.732/2);
%\draw[thick] (2,0) -- (3/2,1.732/2);
\draw[thick] (2,0) -- (1,0);
%\draw[thick] (1/2,1.732/2) -- (3/2,1.732/2);
%\draw[thick] (1/2,-1.732/2) -- (3/2,-1.732/2);
\draw[thick] (1,0) -- (3/2,-1.732/2);
%\draw[thick] (2,0) -- (3/2,-1.732/2);
\node [below=] at (3/2,-1.732/2+0.7) {$T_2$};
\node [below=] at (3/2-0.1,+0.8) {$T_{3,4}$};

\node [below=] at (0.2,0.2) {$T_{5,6}$};
\node [below=] at (0.2,0.2) {$T_{5,6}$};
\node [below=] at (1.,-0.5) {$T_{1}$};
		\end{tikzpicture}
        \caption{Illustration for type II-2 singularity.}
        \label{fig:typeII-2}
        \end{figure}

\end{proof}

\begin{figure}[htbp]
\begin{tikzpicture}[scale=1.5]
\draw[dashed] (0,0) -- (1,0);
\draw[thick] (1/2,1.732/2) -- (1,0);
%\draw[thick] (1/2,1.732/2) -- (1/2,-1.732/2);
%\draw[thick] (0,0) -- (1/2,-1.732/2);
\draw[thick] (1,0) -- (1/2,-1.732/2);
\draw[dashed] (1,0) -- (3/2,1.732/2);
%\draw[thick] (2,0) -- (3/2,1.732/2);
\draw[thick] (2,0) -- (1,0);
%\draw[thick] (1/2,1.732/2) -- (3/2,1.732/2);
%\draw[thick] (1/2,-1.732/2) -- (3/2,-1.732/2);
\draw[dashed] (1,0) -- (3/2,-1.732/2);
%\draw[thick] (2,0) -- (3/2,-1.732/2);
\node [below=] at (3/2-0.1,-1.732/2+0.5) {$T_{1,2}$};
\node [below=] at (3/2-0.1,+0.8) {$T_{3,4}$};

\node [below=] at (0.2,0.2) {$T_{5,6}$};
% \node [below=] at (0.2,0.2) {$T_{1,2}$};
		\end{tikzpicture}
\caption{Illustration for type II-3 singularity.}
\label{fig:typeII-3}
\end{figure}

Finally, we consider the Type II-3 singularities.
\begin{lemma}
    Suppose that $z \in \mathcal V_{II}$ is of Type II-3 (see also \Cref{fig:typeII-3}), and $q$ satisfies \eqref{eq:lag-cond-typeII}. Then there exists $\sigma \in \Sigma_k^0$ satisfying the conditions in \Cref{lem:suff-typeII}.
\end{lemma}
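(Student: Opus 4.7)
The plan is to mimic the reductions used for Types II-1 and II-2, absorbing the value of $q$ at $z$ in two of the three actual triangles via auxiliary functions of type \eqref{eq:xi-r} and handling the residual data forced by the constraint \eqref{eq:lag-cond-typeII} with one additional $\xi$-correction.

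First, I would simplify \eqref{eq:lag-cond-typeII} under the merging $q_1 = q_2 = q_{1,2}$, $q_3 = q_4 = q_{3,4}$, $q_5 = q_6 = q_{5,6}$ together with the Type II identities $\sin\theta_{i+3} = \sin\theta_i$ and $n_{i+3} = -n_i$. A direct computation collapses the six-term sum into
\[
\sin\theta_3 (q_{3,4}+q_{5,6})\cdot n_1 \;-\; \sin\theta_1 (q_{1,2}+q_{3,4})\cdot n_2 \;+\; \sin\theta_2 (q_{1,2}+q_{5,6})\cdot n_3 \;=\; 0.
\]
This is the single scalar constraint coupling the three vector values at $z$.

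Next, following the pattern of the Type II-2 argument, I would set
\[
q' := q - \div\xi_2\bigl(q_{3,4}(z)\bigr) - \div\xi_4\bigl(q_{5,6}(z)\bigr),
\]
with $\xi_r$ from \eqref{eq:xi-r}, indexed by two of the three dashed edges. The geometric mechanism exploited in \Cref{lem:typeI-sufficient} and \Cref{lem:typeII-1-sufficient}, namely that along a missing imaging edge $\overline{zy_r}$ the function $\xi_r(u)$ satisfies $\div\xi_r(u)|_{T_r}(z) = \div\xi_r(u)|_{T_{r+1}}(z) = u$, makes $\xi_r(u)$ effectively supported in a single actual triangle. Consequently $q'|_{T_{3,4}}(z) = q'|_{T_{5,6}}(z) = 0$, and since both corrections are divergences of tensors in $\Sigma_k^0$, $q'$ still satisfies the constraint, which now collapses to
\[
\bigl(\sin\theta_2\, n_3 - \sin\theta_1\, n_2\bigr) \cdot q'|_{T_{1,2}}(z) \;=\; 0,
\]
forcing $q'|_{T_{1,2}}(z)$ into the one-dimensional subspace spanned by $(\sin\theta_2\, n_3 - \sin\theta_1\, n_2)^{\perp}$. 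I would then apply $\xi_6(w)$ one more time with $w$ chosen so that the resulting divergence at $z$ in $T_{1,2}$ equals $q'|_{T_{1,2}}(z)$; by the same argument this last correction leaves the other two actual triangles unchanged at $z$. Setting $\sigma := \xi_2(q_{3,4}(z)) + \xi_4(q_{5,6}(z)) + \xi_6(w)$ produces the desired element of $\Sigma_k^0$, with the bound $\|\sigma\|_{H^1}\lesssim \|q\|_{L^2(\st(z))}$ following from the scaling estimates used in \Cref{lem:suff-typeII}.

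The main obstacle is justifying the identity $\div\xi_r(u)|_{T_r}(z) = \div\xi_r(u)|_{T_{r+1}}(z) = u$ at each of the three dashed edges $\overline{zy_2}, \overline{zy_4}, \overline{zy_6}$ of the Type II-3 configuration. This requires propagating the Type II relations $t_{r+3} = -t_r$ and $\theta_1+\theta_2+\theta_3 = \pi$ through the explicit formulas of \eqref{eq:xi-r} at all three merged edges simultaneously, and verifying that the two-piece definitions of $\xi_2, \xi_4, \xi_6$ across the dashed edges glue into genuine polynomials in the single-triangle shape space $P_k(T_*;\mathbb S^2)$, so that no destructive interaction occurs among the three corrections. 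Once this compatibility is in place, the remainder of the argument—in particular the $H^1$ bookkeeping—mirrors the Type II-2 proof verbatim.
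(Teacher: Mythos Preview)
Your reduction strategy has a genuine gap that does not arise in Types II-1 and II-2. The mechanism you invoke, namely $\div\xi_r(u)|_{T_r}(z)=\div\xi_r(u)|_{T_{r+1}}(z)=u$, hinges on $\sin(\theta_r+\theta_{r+1})=0$, where $\theta_r,\theta_{r+1}$ are the angles at $z$ of the two \emph{actual} elements adjacent to the real edge $\overline{zy_r}$. In Types II-1 and II-2 this holds because one neighbour of the chosen real edge is a merged double element and the other is a single imaging element, so their angles sum to $\pi$. In Type II-3 every real edge separates two merged elements; the adjacent angles are $(\theta_1+\theta_2)+(\theta_3+\theta_4)=\pi+\theta_1$, etc., which is never $\pi$. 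Hence $\div\xi_2(u)|_{T_{3,4}}(z)\neq u$, and your claims $q'|_{T_{3,4}}(z)=q'|_{T_{5,6}}(z)=0$ are false as written. The final $\xi_6$ correction then also perturbs $T_{5,6}$, so the construction does not close.

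Your alternative reading, defining $\xi_r$ along a dashed edge so that it is ``effectively supported in a single actual triangle'', fails for a different reason: such a $\xi_r$ uses the imaging hat function $\psi_r$, which is piecewise linear with a crease along $\overline{zy_r}$ and vanishes at all three real vertices of the actual triangle while being $1$ at the interior imaging point $y_r$. It is therefore not a polynomial on the actual element, so $\xi_r\notin\Sigma_k^0$; the gluing you flag as the ``main obstacle'' genuinely cannot be verified. The paper avoids both issues by abandoning the $\xi_r$ route for Type II-3 and instead adapting the two-step construction from the nondegenerate case: first use $\eta_s=\psi_s\psi_z^2 t_s t_s^T$ at the real vertices $s=2,4,6$ to match the scalar components $q\cdot n_{s+2}$, exploiting \eqref{eq:lag-cond-typeII} so that the third element closes automatically, and then correct the remaining directions with $\tilde\sigma_s=\psi_s\psi_z^2 t_{s+2}t_{s+2}^T$, which has vanishing divergence in the forward neighbour and hits the residual $t_{s+2}$-direction in the backward one.
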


\begin{proof}

For type II-3, \eqref{eq:lag-cond-typeII} reads $$\sin \theta_{1,2} q_{1,2}(z) \cdot n_4 + \sin \theta_{3,4} q_{3,4}(z) \cdot  n_{6} + \sin \theta_{5,6}  q_{5,6}(z) \cdot n_{2} = 0.$$ 

Again, we adopt a two-step argument. 
In the first step, we modify the tangential component. Consider $\eta_s = \psi_s\psi_z^2 t_s t_s^T$.
For $\eta_2$, direct calculation yields that 
$$ \div \eta_2|_{T_{1,2}} \cdot n_4 = \frac{\sin \theta_{1,2} \sin \theta_{3,4}}{h_3} = \frac{\sin \theta_{3,4}}{|\overline{zy_2}|}, $$
and 
$$ \div \eta_2|_{T_{3,4}} \cdot n_6 = -\frac{\sin \theta_{1,2} \sin \theta_{3,4}}{h_2} = -\frac{\sin \theta_{1,2}}{|\overline{zy_2}|}, $$

Therefore, by setting $\tau_2 = \frac{|\overline{zy_2}|}{\sin \theta_{1,2} \sin \theta_{3,4}} \eta_2$, we have 

$$\div \tau_s|_{T_{1,2}} \cdot n_{4} \sin \theta_{1,2} = - \div \tau_s|_{T_{3,4}} \cdot n_{6} \sin \theta_{3,4} = 1.$$

    Similarly, we can construct $\tau_4$, $\tau_6$, and an analog argument can ensure that 
 $$(\div \sigma|_{T_{1,2}}(z) - q_{1,2}(z)) \cdot n_{4} = (\div \sigma|_{T_{3,4}}(z) - q_{3,4}(z)) \cdot n_{6} = (\div \sigma|_{T_{5,6}}(z) - q_{5,6}(z)) \cdot n_{2} = 0.$$

In the second step, we modify the tangential components. Now we suppose $\div \sigma|_{T_{1,2}}(z) - q_{1,2}(z) = \alpha_{1,2} t_4.$ Set $\tilde \sigma_{1,2} = \psi_2 \psi_z^2 t_{4} t_{4}^T.$
It then holds that $\div \tilde \sigma_{1,2}|_{T_{3,4}} = 0$, while 
$$\div \tilde \sigma_{1,2}|_{T_{1,2}} = \frac{\sin \theta_{5,6}}{h_{1,2}} t_{4}.$$ Similarly, we can construct $\tilde \sigma_{3,4}$ and $\tilde \sigma_{5,6}$ to finish the proof.

\end{proof}

\bibliographystyle{plain}
\bibliography{ref}

\end{document}